\newtheorem{assumption}{Assumption}
\def\qed{ \ \vrule width.2cm height.2cm depth0cm\smallskip}
\newenvironment{proof}{\noindent {\bf Proof.\/}}{$\qed$\vskip 0.1in}
\newcommand{\la}{\langle}
\newcommand{\ra}{\rangle}
\newcommand{\we}{\wedge}
\newcommand{\ol}{\overline}
\newcommand{\ul}{\underline}
\newcommand{\eps}{\varepsilon}
\newcommand{\ba}{\begin{array}}
\newcommand{\ea}{\end{array}}
\newcommand{\be}{\begin{equation}}
\newcommand{\ee}{\end{equation}}
\newcommand{\bea}{\begin{eqnarray}}
\newcommand{\eea}{\end{eqnarray}}
\newcommand{\beaa}{\begin{eqnarray*}}
\newcommand{\eeaa}{\end{eqnarray*}}
\def\dbE{\mathbb{E}}
\def\dbF{\mathbb{F}}
\def\dbH{\mathbb{H}}
\def\dbI{\mathbb{I}}
\def\dbL{\mathbb{L}}
\def\dbP{\mathbb{P}}
\def\dbR{\mathbb{R}}
\def\dbS{\mathbb{S}}
\def\dbT{\mathbb{T}}
\def\dbQ{\mathbb{Q}}
\def\a{\alpha}
\def\b{\beta}
\def\g{\gamma}
\def\d{\delta}
\def\e{\varepsilon}
\def\l{\lambda}
\def\si{\sigma}
\def\t{\tau}
\def\f{\varphi}
\def\o{\omega}
\def\Th{\Theta}
\def\L{\Lambda}
\def\O{\Omega}
\def\cA{{\cal A}}
\def\cB{{\cal B}}
\def\cE{{\cal E}}
\def\cF{{\cal F}}
\def\cG{{\cal G}}
\def\cJ{{\cal J}}
\def\cL{{\cal L}}
\def\cP{{\cal P}}
\def\cT{{\cal T}}
\def\cY{{\cal Y}}
\def\cZ{{\cal Z}}
\def\ch{\textsc{h}}
\def\no{\noindent}
\def\ms{\medskip}
\def\bs{\bigskip}
\def\q{\quad}
\def\qq{\qquad}
\def\pa{\partial}
\def\cd{\cdot}
\def\cds{\cdots}
\def\qed{ \hfill \vrule width.25cm height.25cm depth0cm\smallskip}
\newcommand{\basa}{\begin{assumption}}
\newcommand{\easa}{\end{assumption}}
\newcommand{\bas}{\begin{assum}}
\newcommand{\eas}{\end{assum}}
\def\limsup{\mathop{\overline{\rm lim}}}
\def\liminf{\mathop{\underline{\rm lim}}}
\def\esup{\mathop{\rm ess\!-\!sup}}
\def\einf{\mathop{\rm ess\!-\!inf}}
\def\pa{\partial}
 \def\cd{\cdot}
\def\cds{\cdots}
\def\dis{\displaystyle}
\def\1{{\bf 1}}
\def\:{\!:\!}
\def\reff#1{{\rm(\ref{#1})}}
\def \proof{{\noindent \bf Proof\quad}}
\begin{document}

\newtheorem{thm}{Theorem}[section]
\newtheorem{lem}[thm]{Lemma}
\newtheorem{cor}[thm]{Corollary}
\newtheorem{prop}[thm]{Proposition}
\newtheorem{rem}[thm]{Remark}
\newtheorem{eg}[thm]{Example}
\newtheorem{defn}[thm]{Definition}
\newtheorem{assum}[thm]{Assumption}

\renewcommand {\theequation}{\arabic{section}.\arabic{equation}}
\def\thesection{\arabic{section}}

\numberwithin{equation}{section}
\numberwithin{thm}{section}

\title{\bf Comparison of Viscosity Solutions of Semi-linear Path-Dependent PDEs}
\author{Zhenjie  {\sc Ren}\footnote{CMAP, Ecole Polytechnique Paris, ren@cmap.polytechnique.fr. Research supported by grants from R\'egion Ile-de-France}   \and
Nizar {\sc Touzi}\footnote{CMAP, Ecole Polytechnique Paris, nizar.touzi@polytechnique.edu. Research supported by ANR the Chair {\it Financial Risks} of the {\it Risk Foundation} sponsored by Soci\'et\'e G\'en\'erale, and the Chair {\it Finance and Sustainable Development} sponsored by EDF and Calyon. }
       \and Jianfeng {\sc Zhang}\footnote{University of Southern California, Department of Mathematics, jianfenz@usc.edu. Research supported in part by NSF grant DMS 1413717.}}
\maketitle

\begin{abstract}
This paper provides a probabilistic proof of the comparison result for viscosity solutions of path-dependent semilinear PDEs. We consider the notion of viscosity solutions introduced in \cite{EKTZ} which considers as test functions all those smooth processes which are tangent in mean. When restricted to the Markovian case, this definition induces a larger set of test functions, and reduces to the notion of stochastic viscosity solutions analyzed in \cite{BayraktarSirbu1,BayraktarSirbu2}. Our main result takes advantage of this enlargement of the test functions, and provides an easier proof of comparison. This is most remarkable in the context of the linear path-dependent heat equation. As a key ingredient for our methodology, we introduce a notion of punctual differentiation, similar to the corresponding concept in the standard viscosity solutions \cite{CaffarelliCabre}, and we prove that semimartingales are almost everywhere punctually differentiable. This smoothness result can be viewed as the counterpart of the Aleksandroff smoothness result for convex functions. A similar comparison result was established earlier in \cite{EKTZ}. The result of this paper is more general and, more importantly, the arguments that we develop do not rely on any representation of the solution. 
\end{abstract}

\vspace{5mm}

\noindent{\bf Key words:} Viscosity solutions, optimal stopping, path-dependent PDEs.

\vspace{5mm}

\noindent{\bf AMS 2000 subject classifications:}  35D40, 35K10, 60H10, 60H30.

\vfill\eject

\section{Introduction}

This paper provides a purely probabilistic wellposedness result for the semilinear path-dependent partial differential equation:
 \bea\label{ppde-intro}
 -\partial_tu-\frac12\mbox{Tr}\big[\si_t(\o)\si^{\rm T}_t(\o)\partial^2_{\o\o}u\big]
 -F_t\big(\o,u_t(\o), \si^T_t(\o)\pa_\o u_t(\o)\big)
 = 0
 &\mbox{on}&
 [0,T)\time\O,
 \eea
 where $T>0$ is a given terminal time, $\o\in \O$ is a continuous path from $[0,T]$ to $\dbR^d$ starting from the origin, the diffusion coefficient $\si$ is a mapping from $[0, T]\times \O$ to $\dbR^{d\times d}$ with $\si^T$ denoting its transpose, and the nonlinearity $F$ is a mapping from $(t,\o,y,z)\in[0,T]\times\O\times\dbR\times\dbR^d$ to $\dbR$.
The unknown process $\{u_t(\o),t\in[0,T]\}$ is required to be continuous in $(t,\o)$. In the smooth case, the derivatives $\partial_tu$, $\partial_{\o}u$, and $\partial^2_{\o\o}u$ are defined in accordance with the functional It\^{o} formula  introduced by Dupire \cite{Dupire}. However, as it is shown in the previous literature \cite{EKTZ,ETZ1,ETZ2}, such a smoothness requirement is rather exceptional, even in the case of the path-dependent heat equation, that is, $\si = I_d$ and $F\equiv 0$.

Our objective is to continue the development of the theory of viscosity solutions in this context. Viscosity solutions in finite dimensional spaces, which are locally compact, were introduced by Crandall and Lions \cite{CrandallLions}, we refer to \cite{CrandallIshiiLions} and \cite{FlemingSoner} for an overview. Extensions to infinite-dimensional spaces with special structure have also been  established by Lions \cite{Lions1,Lions2, Lions3} and Swiech \cite{Swiech}. However, these extensions are not suitable for our purpose due to the two following reasons. First, the path space $\O$ is a Banach space when endowed with the $\dbL^\infty-$norm, and not a Hilbert space as assumed in the above literature. Secondly, the adaptedness requirement on the functions $u(t,\o)$ is a special feature of our problem which is not addressed in the infinite-dimensional PDE literature. 

Nonlinear path-dependent PDEs appear in various applications as the stochastic control of non-Markovian systems \cite{ETZ1}, and the corresponding stochastic differential games \cite{PhamZhang}. They are also intimately related to the backward stochastic differential equations introduced by Pardoux and Peng \cite{PardouxPeng}, and their extension to the second order in \cite{CSTV,STZ2}. Loosely speaking, backward SDEs can be viewed as Sobolev solutions of path-dependent PDEs, and our goal is to develop the alternative notion of viscosity solutions which is well-known to provide a suitable wellposedness and stability theory in the Markovian case $u(t,\o)=u(t,\o_t)$. We also refer to the recent applications in \cite{H-LTT} to establish a representation of the solution of a class of equations  \eqref{ppde-intro} in terms of branching diffusions, and to \cite{MRTZ} for the small time large deviation results of path-dependent diffusions.

The notion of viscosity solutions studied in this paper, as introduced in \cite{EKTZ, ETZ1, ETZ2}, consider smooth test processes which are tangent in mean, with respect to an appropriate class of probability measures, to the process of interest. This is in contrast with the Crandall and Lions \cite{CrandallLions} standard notion of viscosity solutions in finite dimensional spaces where the test functions are tangent in the pointwise sense. In particular, when restricted to the Markovian case, our notion of viscosity solutions allows for a larger set of test functions, and in the case of the heat equation (or more general linear equation) case it reduces to the notion of stochastic viscosity solutions analyzed by Bayraktar and Sirbu \cite{BayraktarSirbu1,BayraktarSirbu2}. Consequently, the uniqueness may be easier with our notion, while existence is more restricted and may become harder. However, it was proved in the previous papers \cite{EKTZ,ETZ1,ETZ2} that existence still holds true under this notion of viscosity solutions for a large class of equations. In particular, in the present semilinear case, the solution of  backward SDEs provides a natural probabilistic representation for viscosity solution of path dependent PDEs, and thus extends the nonlinear Feynman-Kac formula of \cite{PP92} to path dependent case.


The main contribution of this paper is to provide a probabilistic proof of the comparison result for the path-dependent equation \eqref{ppde-intro} which, in contrast with \cite{EKTZ}, does not rely on any representation of the solution. We also observe that the present comparison result is stronger than that of \cite{EKTZ} as it holds in a larger class of processes and for a random and possibly degenerate diffusion coefficient $\si$ (in \cite{EKTZ}, only $\si=I_d$ is considered). Our proof by-passes completely the delicate and deep Crandall and Ishii Lemma (see Lemma 3.2 in \cite{CrandallIshiiLions}). In particular, our proof of comparison result for the path-dependent heat equation is elementary, and does not require any penalization to address (the standard comparison result for second order PDEs applies to a bounded domain, the extension to an unbounded domain involves a penalization using the growth conditions). In particular, the wellposedness of the path-dependent heat equation is a direct consequence of the equivalence between the viscosity subsolution and the  submartingale properties.

Our arguments are inspired from the work of Caffarelli and Cabre \cite{CaffarelliCabre}. By adapting the notion of punctual differentiation to our path-dependent framework, we prove an important smoothness result. Namely, we show that semimartingales are punctually differentiable Leb$\otimes\dbP-$a.e. This result can be viewed as the analogue of the Aleksandroff regularity result for convex functions. In the present semilinear case, an important property of our notion of viscosity solutions is that viscosity subsolutions (resp. supersolutions) are submartingales (resp. supermartingales) up to the addition of some absolutely continuous process. In particular, viscosity subsolutions and supersolutions are  punctually differentiable Leb$\otimes\dbP-$a.e.

We shall remark that, while the framework of fully nonlinear path dependent PDEs in \cite{ETZ1, ETZ2} covers the random coefficient $\si$ here, their comparison result excludes this case due to their heavy reliance  on the locally uniform smooth approximation of the viscosity solution. The definition of viscosity solutions here is slightly different from that  in \cite{ETZ1, ETZ2} by considering even more test functions. This enlargement of test function class allows us to establish the punctual differentiation of viscosity solutions, which does not require the smooth approximation to be locally uniform.  On a different perspective, the class of probability measures used to determine the test functions is non-dominated in \cite{ETZ1, ETZ2}, consequently the corresponding convergence theorem requires very strong regularity of the involved processes. It is still unclear how to obtain the punctual differentiation of viscosity solutions, even for the present semilinear PPDE \reff{ppde-intro}, if we use the non-dominated class of probability measures as in \cite{ETZ1, ETZ2}. 

The rest of the paper is organized as follows. Section \ref{sect:preliminary} introduces the set up of the problem, in particular the class of probability measures we will use. The notion of viscosity solution is defined in Section \ref{sec:semi PPDE}. In particular, similar to the Crandall and Lions \cite{CrandallLions} standard notion of viscosity solutions,  we show that our notion  for path dependent PDEs can be formulated equivalently in terms of the corresponding semijets. Section \ref{sec:present_mainresults} is devoted to the main result of the paper: the comparison result of viscosity solutions of semilinear path dependent PDEs. Then, Section \ref{sec:existence} prove briefly the existence of viscosity solutions by using the wellposedness of corresponding BSDEs. Finally Section \ref{sec:appendix} completes the technical proofs.

\section{Preliminaries}\label{sect:preliminary}
Throughout this paper let $T>0$ be a given finite maturity, $\O:=\{\o\in C([0,T];\dbR^d):\o_0=0\}$ the set of continuous paths starting from the origin, and $\Theta:=[0,T]\times\O$. We denote by $B$ the canonical process on $\O$, $\dbF = \{\cF_t, 0\le t\le T\}$ the canonical filtration,  $\cT$ the set of all $\dbF$-stopping times taking values in $[0,T]$, and $\dbP_0$ the Wiener measure on $\O$. Moreover, let $\cT^+$ denote the subset of $\t\in\cT$  taking values in $(0,T]$, and for $\ch \in \cT$, let $\cT_\ch$ and $\cT_\ch^+$ be the subset of $\t\in \cT$ taking values in $[0, \ch]$ and $(0, \ch]$, respectively.

Following Dupire \cite{Dupire}, we introduce the following pseudo-distance on $\Theta$:
 \beaa
\|\o\|_t := \sup_{0\le s\le t} |\o_s|,\q  d\big((t,\o),(t',\o')\big)
 :=
 |t-t'|+\|\o_{t\wedge}-\o'_{t'\wedge}\|_T
 &\mbox{for all}&
 (t,\o), (t',\o')\in\Theta.
 \eeaa
We say  a process valued in some metric space $E$ is in $C^0(\Theta,E)$ whenever it is continuous with respect to $d$. Similarly, $\dbL^0(\cF_t, E)$ and $\dbL^0(\dbF, E)$ denote the set of $\cF_t$-measurable random variables and $\dbF$-progressively measurable processes, respectively. We remark that $C^0(\Theta,E)\subset \dbL^0(\dbF, E)$, and when $E=\dbR$, we shall omit it in these notations.

For any $A\in \cF_T$, $\xi\in \dbL^0(\cF_T, E)$, $X\in \dbL^0(\dbF,E)$, and  $(t,\o)\in[0,T]\times\O$, define:
 \beaa
&A^{t,\o} := \{\o'\in \O: \o\otimes_t \o' \in A\},\q  \xi^{t,\o}(\o')
 :=
 \xi(\o\otimes_t\o'), 
\q  X^{t,\o}_s(\o')
 :=
 X(t+s,\o\otimes_t\o')&\\
 &~~\mbox{for all}~~\o'\in\O,~~
 \mbox{where}\q
 (\o\otimes_t\o')_s:=\o_s\1_{[0, t]}(s) +(\o_t+\o'_{s-t})\1_{(t, T]} (s),\q 0\le s\le T.&
 \eeaa
 Following the standard arguments of monotone class, we have the following simple results.

\begin{lem}\label{lem:measurability F}
Let $0\le t\le s\le T$ and $\o\in \O$. Then
 $A^{t,\o}\in \cF_{s-t}$ for all $A\in\cF_{s}$, $\xi^{t,\o} \in \dbL^0(\cF_{s-t}, E)$ for all $\xi\in \dbL^0(\cF_s, E)$, $X^{t,\o} \in \dbL^0(\dbF, E)$ for all  $X\in \dbL^0(\dbF, E)$, and $\t^{t,\o}-t\in \cT_{s-t}$ for all $\t\in\cT_s$.
\end{lem}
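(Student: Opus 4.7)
All four statements reduce to a single measurability property of the concatenation map $\o' \mapsto \o \otimes_t \o'$. My plan is to prove (1) by a direct monotone-class argument, read off (2) and (4) as immediate corollaries, and then handle (3) by composition. For (1), let $\cM_s := \{A \in \cF_s : A^{t,\o} \in \cF_{s-t}\}$. It is routine that $\cM_s$ is a $\sigma$-algebra, so it suffices to show $\cM_s$ contains a generating family of $\cF_s$. Taking the finite-dimensional cylinders $\{(B_{r_1}, \ldots, B_{r_n}) \in E\}$ with $r_i \le s$ and $E$ Borel, the definition of $\otimes_t$ gives
\[
(\o \otimes_t \o')_{r_i} = \o_{r_i}\,\1_{\{r_i \le t\}} + (\o_t + \o'_{r_i - t})\,\1_{\{r_i > t\}},
\]
so the corresponding $A^{t,\o}$ is a cylinder event depending on $\o'$ only through coordinates $\o'_{r_i - t}$ with $r_i - t \in [0, s-t]$; hence $A^{t,\o} \in \cF_{s-t}$.

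Statement (2) follows from (1) by writing $\{\xi^{t,\o} \in B\} = \{\xi \in B\}^{t,\o}$ for Borel $B \subset E$. For (4), for any $r \in [0, s-t]$ one has $\{\t^{t,\o} - t \le r\} = \{\t \le r+t\}^{t,\o}$, which is in $\cF_r$ by (1) applied at time $r+t \le s$; I note that the statement tacitly assumes $\t \ge t$ so that $\t^{t,\o} - t$ takes values in $[0, s-t]$, which is the only regime in which the lemma is later invoked.

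For (3), I fix $s \in [0, T-t]$ and consider $\Phi : [0, s] \times \O \to [0, t+s] \times \O$ defined by $\Phi(r, \o') := (t+r,\; \o \otimes_t \o')$. The time coordinate is Borel, and the path coordinate is $\cF_s/\cF_{t+s}$-measurable by the same coordinatewise computation as in (1). Thus $\Phi$ is measurable from $\cB([0,s]) \otimes \cF_s$ to $\cB([0, t+s]) \otimes \cF_{t+s}$, and composing with the restriction of $X$ to $[0, t+s] \times \O$, which is progressively measurable by hypothesis, yields that $X^{t,\o}$ is $\cB([0,s]) \otimes \cF_s$-measurable on $[0, s] \times \O$. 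Since $s$ is arbitrary, $X^{t,\o} \in \dbL^0(\dbF, E)$. There is no genuine obstacle here; the only care required is keeping the filtration indices aligned in (3), matching the target of $\Phi$ with the range on which the progressive measurability of $X$ is available.
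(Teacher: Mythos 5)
Your proof is correct and follows exactly the route the paper intends: the paper gives no proof of this lemma beyond the remark that it follows "from the standard arguments of monotone class," and your argument — checking the concatenation map on cylinder sets, upgrading to the generated $\sigma$-algebra, and deducing the other three claims as preimage/composition statements — is precisely that standard argument written out. Your side remark that the stopping-time claim tacitly requires $\t\ge t$ is a fair and accurate reading of how the lemma is used later in the paper.
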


To study the semilinear PPDE \reff{ppde-intro}, we need to introduce the diffusion coefficient $\si$. Throughout the paper, the following assumption will always be in force.

\begin{assum}\label{assum:si}
The diffusion coefficient $\si: (t,\o)\in\Th\rightarrow \si_t(\o)\in \dbR^{d\times d}$ is  continuous in $t$, and Lipschtiz continuous in $\o$ uniformly in $t$, i.e.
$$|\si_t(\o)-\si_t(\o')|\leq C\|\o-\o'\|_t~~\mbox{for all}~~t\in[0,T],~\o,\o'\in\O,~~\mbox{for some}~~C\geq 0;$$
\end{assum}

\begin{rem}
Assumption \ref{assum:si} implies that $\si$ is continuous in $(t,\o)$, and thus $\dbF-$adapted. Also, we allow the parabolic  PPDE \reff{ppde-intro} to be degenerate.
\end{rem}

Our paper  builds on the following result.

\begin{lem}\label{lem:weaksolution}
For any bounded $\l \in \dbL^0(\dbF, \dbR^d)$, the following SDE has a unique weak solution:
\bea
\label{Xweak}
d X_t = \si(t, X_\cd) \big[dW_t + \l_t(X_\cd) dt\big],\q  X_0=0,
\eea
where $W$ is a Brownian motion. 
In particular, if $\l=0$, the SDE has a unique strong solution. The solution will be denoted as $\dbP_{\si,\l}$, and $\dbP_\si := \dbP_{\si, 0}$ when $\l=0$.
\end{lem}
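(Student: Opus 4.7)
The plan is to treat the driftless case first by direct Picard iteration, build the general case by a Girsanov change of measure, and then invert the Girsanov transformation to obtain uniqueness in law.

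First, for $\l\equiv 0$: fix an auxiliary Wiener space carrying a Brownian motion $W$ and iterate $X^0\equiv 0$, $X^{n+1}_t := \int_0^t \si(s, X^n_\cd)\,dW_s$. Assumption \ref{assum:si} gives $|\si_s(\o)-\si_s(\o')|\le C\|\o-\o'\|_s$, so the Burkholder--Davis--Gundy inequality yields
$$
\dbE\Big[\sup_{s\le t}|X^{n+1}_s - X^n_s|^2\Big]
\;\le\; C\int_0^t \dbE\Big[\sup_{r\le s}|X^n_r - X^{n-1}_r|^2\Big]\,ds,
$$
and Gronwall's lemma makes the iterates Cauchy in $\dbL^2(C([0,T];\dbR^d))$. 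The limit is a strong solution, and pathwise uniqueness follows from the same estimate applied to the difference of two solutions. Let $\dbP_\si$ denote the law of this solution on $\O$; under $\dbP_\si$, the canonical process $B$ satisfies $dB_t=\si_t(B)\,dW_t$ for some Brownian motion $W$ (constructed on a standard extension of the space when $\si$ is degenerate).

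Second, for bounded $\l$: under $\dbP_\si$ set
$$
M_T := \exp\!\Big(\int_0^T \l_t(B)\cd dW_t - \tfrac{1}{2}\int_0^T |\l_t(B)|^2\,dt\Big).
$$
Boundedness of $\l$ verifies Novikov's condition, so $M$ is a true $\dbP_\si$-martingale; define $\dbP_{\si,\l} := M_T\cd\dbP_\si$. Girsanov's theorem then asserts that $\tilde W_t := W_t - \int_0^t \l_s(B)\,ds$ is a $\dbP_{\si,\l}$-Brownian motion, and
$$
dB_t = \si_t(B)\,dW_t = \si_t(B)\big[d\tilde W_t + \l_t(B)\,dt\big],
$$
exhibiting $\dbP_{\si,\l}$ as a weak solution. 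For uniqueness in law, given any weak solution $\dbQ$ with driving Brownian motion $W^\dbQ$, I would apply the reverse density $N_T := \exp\big(\!-\!\int_0^T \l_t(B)\cd dW^\dbQ_t - \tfrac{1}{2}\int_0^T |\l_t(B)|^2\,dt\big)$ to obtain $\tilde\dbQ := N_T\cd\dbQ$, under which $B$ solves the $\l=0$ SDE; uniqueness in law for that SDE---implied via Yamada--Watanabe by the strong existence and pathwise uniqueness established above---forces $\tilde\dbQ = \dbP_\si$, and reversing the density yields $\dbQ = \dbP_{\si,\l}$.

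The main technical point is the very first step: because $\si$ depends on the full path through the pseudo-distance $\|\cd\|_s$ rather than pointwise, the Picard estimates must be carried out in the sup-norm, which is precisely why Assumption \ref{assum:si} is stated in terms of $\|\o-\o'\|_t$. Once this functional Picard scheme is in hand, the subsequent Girsanov manipulations and their inversion are entirely routine.
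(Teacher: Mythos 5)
Your proposal is correct and follows essentially the same route as the paper: solve the driftless SDE strongly using the Lipschitz continuity of $\si$ in the sup-norm (the paper simply invokes this; you spell out the Picard iteration), obtain $\dbP_{\si,\l}$ by a Girsanov change of measure, and prove uniqueness in law by inverting the Girsanov transformation and appealing to uniqueness for the driftless equation, exactly as in Karatzas--Shreve Proposition 5.3.10, which the paper cites. The only place where you are slightly less careful than the paper is the final step ``reversing the density yields $\dbQ=\dbP_{\si,\l}$'': since $\si$ may be degenerate, the density $N_T^{-1}$ is a functional of the pair $(W^{\dbQ},B)$ and not of $B$ alone, so knowing $\tilde\dbQ=\dbP_\si$ as a law of $B$ is not by itself enough; one must observe that, $B$ being a strong (hence measurable) functional of the driftless driving Brownian motion, the joint law of that Brownian motion together with $B$ --- and hence of the density --- is the same for any two weak solutions. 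This is precisely the point the paper makes explicit by noting that $W^i$ and $M^i_T$ are functions of $(\tilde W^i, X^i)$; the ingredients for it are already present in your argument, so the fix is immediate.
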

\proof We first construct the solution by using the Girsanov transformation. First, thanks to the Lipschitz continuity of $\si$, let $X$ be the unique (strong) solution of the following SDE \bea
\label{Xsi}
X^\si_t = \int_0^t \si_s(X^\si_\cd)dB_s~~\mbox{for all}~~t,~\dbP_0-\mbox{a.s.}
\eea
Denote
\bea
\label{Psil}
&\dis B^\l_t := B_t - \int_0^t \l_t (X^\si_\cd)dt,\q  \dbP_{\si,\l} := \dbP_\l \circ (X^\si)^{-1}&\\
&\dis \mbox{where}~ {d \dbP_\l\over d\dbP_0}
:= 
\exp\Big(\int_0^T \l_t(X^\si_\cd) \cd dB_t - {1\over 2} \int_0^T |\l_t(X^\si_\cd)|^2dt\Big).&\nonumber
\eea
Then clearly $(B^\l, X^\si, \dbP_{\si,\l})$ is a weak solution to SDE \reff{Xweak}.

For the uniqueness, we follows the arguments in \cite{KS} Proposition 5.3.10.  Let $(W^i, X^i, \dbP^i)$, $i=1,2$ be two weak solutions to SDE \reff{Xweak}, namely $W^i$ is a $\dbP^i$-Brownian motion and
\beaa
d X^i_t = \si(t, X^i_\cd) \big[dW^i_t + \l_t(X^i_\cd) dt\big],\q  X^i_0=0,\q \dbP^i\mbox{-a.s.}
\eeaa
Denote 
\beaa
\tilde W^i_t := \int_0^t [dW^i_s + \l_s(X^i_\cd) ds],\q {d\tilde \dbP^i\over d\dbP^i} := M^i_T := \exp\Big(-\int_0^T \l_t(X^i_\cd) \cd dW_t^i - {1\over 2}\int_0^T |\l_t(X^i_\cd)|^2dt\Big).
\eeaa
Then $\tilde W^i$ is a $\tilde\dbP^i$-Brownian motion, and 
\beaa
X^i_t = \int_0^t \si( s,X^i_\cd) d\tilde W^i_s, \q \tilde\dbP^i\mbox{-a.s.}
\eeaa
By the Lipschitz continuity of $\si$, the $\tilde \dbP^1$-distribution of $(\tilde W^1, X^1)$ is equal to the  $\tilde \dbP^2$-distribution of $(\tilde W^2, X^2)$.  Note that $W^i$ and $M^i_T$ are functions of $(\tilde W^i, X^i)$, we see that the $\tilde \dbP^1$-distribution of $(\tilde W^1, X^1, W^1, M^1_T)$ is equal to the  $\tilde \dbP^2$-distribution of $(\tilde W^2, X^2, W^2, M^2_T)$. Now it follows from  $d\dbP^i = (M^i_T)^{-1} d\tilde \dbP^i$ that  the $\dbP^1$-distribution of $X^1$ is equal to the  $\dbP^2$-distribution of $X^2$.
\qed

For any $\t\in \cT$ and $\o \in \O$, let $\dbP^{\t,\o}$ be an r.c.p.d. of the probability measure $\dbP$ conditional to $\cF_\t$.  

\begin{lem}\label{lemma full}
Let $M$ be a $\dbP$-martingale with continuous paths, $\dbP$-a.s.. Then, for any $\t\in\cT$ we have 
 \be\label{P0 full}
 \dbP\big[\O_\t^{0}\big]=1,
 ~\mbox{where}~
 \O_\t^{0}:=\big\{\o: M^{\t,\o}~\mbox{is a}~\dbP^{\t,\o}-\mbox{martingale}\big\}.
 \ee
\end{lem}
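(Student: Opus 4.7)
The plan is to deduce the $\dbP^{\tau,\omega}$-martingale property of $M^{\tau,\omega}$ from the $\dbP$-martingale property of $M$ via the defining identity of the r.c.p.d., $\dbE^\dbP[\xi\,|\,\cF_\tau](\omega) = \dbE^{\dbP^{\tau,\omega}}[\xi^{\tau,\omega}]$, tested against a countable dense class of time-shifted functionals.

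First, I would invoke optional sampling at the bounded stopping times $(\tau+s)\wedge T$ to observe that $\tilde M_s := M_{(\tau+s)\wedge T}$ is a $\dbP$-martingale in $s\in[0,T]$ with respect to the filtration $\tilde\cF_s := \cF_{(\tau+s)\wedge T}$, its $\dbP$-integrability being inherited from $M_T\in\dbL^1(\dbP)$ via the submartingale property of $|M|$. Next, I would fix a countable test class $\cH$ of bounded functionals of the form $h(\bar\omega) = \prod_{i=1}^n \phi_i\bigl(\bar\omega_{(\tau+u_i)\wedge T} - \bar\omega_\tau\bigr)$, with $u_i\in[0,T]\cap\dbQ$ and $\phi_i$ in a countable dense subset of $C_b(\dbR^d)$. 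Using the stopping-time identity $\tau(\omega\otimes_\tau\omega') = \tau(\omega)$ (valid because $\omega\otimes_\tau\omega'$ agrees with $\omega$ on $[0,\tau(\omega)]$) together with $(\omega\otimes_\tau\omega')_{\tau(\omega)+u} - (\omega\otimes_\tau\omega')_{\tau(\omega)} = \omega'_u$, one computes $h^{\tau,\omega}(\omega') = \prod_i \phi_i\bigl(\omega'_{u_i\wedge(T-\tau(\omega))}\bigr)$. On the target space this is an $\cF_s$-measurable functional whenever $u_i\le s$, and the collection $\{h^{\tau,\omega}:h\in\cH\}$ then forms a multiplicative class generating $\cF_s$.

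For each triple of rationals $(s,t,h)$ with $s\le t$ and $h\in\cH$ supported on times $u_i\le s$, the martingale identity yields $\dbE^\dbP[(\tilde M_t-\tilde M_s)h]=0$; since $\cF_\tau\subset\tilde\cF_s$ and $h$ is $\tilde\cF_s$-measurable, conditioning on $\cF_\tau$ and invoking the r.c.p.d. characterization produces a $\dbP$-null set $N_{s,t,h}$ outside of which
\[
\dbE^{\dbP^{\tau,\omega}}\Bigl[\bigl(M^{\tau,\omega}_{t\wedge(T-\tau(\omega))} - M^{\tau,\omega}_{s\wedge(T-\tau(\omega))}\bigr)h^{\tau,\omega}\Bigr]=0.
\]
Let $N$ be the union of these countably many exceptional sets, together with the $\dbP$-null event that $M$ fails to have continuous paths. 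Then for every $\omega\notin N$, the identity above holds simultaneously for all rational $(s,t,h)$ in the allowed range.

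To finish, extension from rational to arbitrary $s\le t$ in $[0,T-\tau(\omega)]$ follows from $\dbP^{\tau,\omega}$-a.s.\ path continuity of $M^{\tau,\omega}$ (inherited via r.c.p.d. from $\dbP$-a.s.\ continuity of $M$) together with dominated convergence, whose domination is supplied by Doob's $\dbL^1$ maximal inequality for $|M|$. The monotone class theorem then upgrades the test from $\{h^{\tau,\omega}:h\in\cH\}$ to all bounded $\cF_s$-measurable functions, yielding the martingale property. The main technical subtlety, and really the only nonroutine point, is the careful bookkeeping surrounding the stopping-time shift identity $\tau(\omega\otimes_\tau\omega')=\tau(\omega)$ and the capping at $T$ when $\tau(\omega)+t > T$; once these are in place, the argument reduces to a routine application of regular conditional probability techniques.
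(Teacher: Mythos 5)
Your proposal is correct and follows essentially the same route as the paper: reduce to a countable test class (rational times, a countable dense subset of $C_b$), apply the tower/defining property of the r.c.p.d.\ to each member to produce a $\dbP$-null set, take the countable union, and then extend by approximation; you merely spell out the ``standard approximation arguments'' that the paper leaves implicit. The one imprecision is that Doob's $\dbL^1$ maximal inequality does not furnish an integrable dominating function for the passage from rational to general times --- one should instead invoke uniform integrability of $\{M^{\tau,\omega}_q\}_{q\in\dbQ}$ (e.g., via its representation as conditional expectations of the value at a later rational time) --- but this is routine and does not affect the argument.
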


\begin{proof} By standard approximation arguments, 
it is sufficient to prove that, for any $0\le t_1 <t_2\le T$ and  any sequence $0\le s_1\le \cdots\le s_n\le t_1$ such that $(s_1,\cdots,s_n)\in \dbQ^n$, it holds that
\begin{multline}
\label{martingaleproof}
\dbE^{\dbP^{\t,\o}}\big[(M^{\t,\o}_{t_2}-M^{\t,\o}_{t_1})\f(B_{s_1},\cdots,B_{s_n})]=0,~\mbox{for all}~\f\in C_b(\dbR^n)\\
\mbox{and for all}~\o\in\O^n_\t,~\mbox{for some}~\O^n_\t~\mbox{such that}~\dbP[\O^n_\t]=1.
\end{multline}
Since $C_b(\dbR^n)$ is a separable space, there exists a countable set $(\psi^n_k)_{k\ge 1}$ dense in $C_b(\dbR^n)$. By the tower property, we may find $\O^n_k\subset\O$ such that
 \beaa
 \dbE^{\dbP^{\t,\o}}\big[(M^{\t,\o}_{t_2}-M^{\t,\o}_{t_1})\psi^n_k(B_{s_1},\cdots,B_{s_n})\big]=0, 
 &\mbox{for all}&
 \o\in\O^n_k,~~\dbP[\O^n_k]=1.
\eeaa
Then \eqref{martingaleproof} holds on $\O^n_\t:=\cap_{k\ge 1}\O^n_k$.
\end{proof}

For all $\t\in \cT$ and $\o \in \O$, it is clear that $\si^{\t,\o}$ satisfies Assumption \ref{assum:si}. Then, for any bounded $\l\in\dbL^0(\dbF,\dbR^d)$, we may define from Lemma \ref{lem:weaksolution} a probability measure $\dbP_{\si^{\t,\o}, \l^{\t,\o}}$. The next result compares this probability measure to the r.c.p.d. $\dbP^{\tau,\o}_{\si,\l}$.

\begin{prop}\label{prop:rcpd}
Let $\l \in \dbL^0(\dbF,\dbR^d)$ be  bounded and $\t\in \cT$. Then for $\dbP_{\si,\l}$-a.e. $\o$, $\dbP^{\t,\o}_{\si, \l} = \dbP_{\si^{\t,\o}, \l^{\t,\o}}$, namely $\dbE^{\dbP^{\t,\o}_{\si,\l}}[\xi] = \dbE^{\dbP_{\si^{\t,\o}, \l^{\t,\o}}}[\xi]$, for any bounded  $\xi \in \dbL^0(\cF_{T-\t(\o)})$.
\end{prop}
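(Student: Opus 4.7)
The plan is to identify both $\dbP^{\t,\o}_{\si,\l}$ and $\dbP_{\si^{\t,\o},\l^{\t,\o}}$ as the unique probability on $\O$ solving a common martingale problem on $[0,T-\t(\o)]$, and then invoke the uniqueness part of Lemma \ref{lem:weaksolution}.

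\textbf{Step 1 (martingale problem reformulation).} From the construction of $\dbP_{\si,\l}=\dbP_\l\circ(X^\si)^{-1}$ in the proof of Lemma \ref{lem:weaksolution}, the process
$$L^\l_t \;:=\; B_t - \int_0^t \si_s(B)\l_s(B)\,ds,\qquad t\in[0,T],$$
is a continuous $d$-dimensional martingale under $\dbP_{\si,\l}$, and the symmetric matrix process $L^\l(L^\l)^{\rm T} - \int_0^\cdot (\si\si^{\rm T})_s(B)\,ds$ is also a $\dbP_{\si,\l}$-martingale (boundedness of $\l$ and Assumption \ref{assum:si} give the necessary integrability). Conversely, any probability measure on $\O$ under which these two processes are continuous martingales gives, by L\'evy's characterization applied to a suitable augmentation, a weak solution of \reff{Xweak}; Lemma \ref{lem:weaksolution} thus forces uniqueness at this martingale-problem level.

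\textbf{Step 2 (conditioning via Lemma \ref{lemma full}).} Apply Lemma \ref{lemma full} separately to each of the $d$ components of $L^\l$ and each of the $d(d+1)/2$ entries of $L^\l(L^\l)^{\rm T} - \int_0^\cdot (\si\si^{\rm T})_s(B)\,ds$, and intersect the resulting $\dbP_{\si,\l}$-full-measure events. This produces a Borel set $\O^\star\subset\O$ with $\dbP_{\si,\l}[\O^\star]=1$ such that, for every $\o\in\O^\star$, the shifted processes $(L^\l)^{\t,\o}$ and $\bigl(L^\l(L^\l)^{\rm T} - \int_0^\cdot (\si\si^{\rm T})_s(B)\,ds\bigr)^{\t,\o}$ are continuous $\dbP^{\t,\o}_{\si,\l}$-martingales on $[0,T-\t(\o)]$.

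\textbf{Step 3 (identification of the shifted coefficients).} By the definitions of the concatenation $\o\otimes_\t\o'$ and of $\si^{\t,\o}$, $\l^{\t,\o}$, for any $u\in[0,T-\t(\o)]$ and any $\o'\in\O$,
$$\si_{\t(\o)+u}(\o\otimes_\t\o') \;=\; \si^{\t,\o}_u(\o'),\qquad \l_{\t(\o)+u}(\o\otimes_\t\o') \;=\; \l^{\t,\o}_u(\o').$$
Consequently, viewed as functionals of the canonical process $B$ under $\dbP^{\t,\o}_{\si,\l}$, the shifted martingales of Step 2 take the form
\begin{align*}
(L^\l)^{\t,\o}_s - (L^\l)^{\t,\o}_0 \;=\; B_s - \int_0^s \si^{\t,\o}_u(B)\l^{\t,\o}_u(B)\,du,
\end{align*}
together with the analogous identity for the matrix-valued process, whose angle bracket becomes $\int_0^s(\si\si^{\rm T})^{\t,\o}_u(B)\,du$. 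Thus the canonical process under $\dbP^{\t,\o}_{\si,\l}$ solves exactly the martingale problem associated with the shifted coefficients $(\si^{\t,\o},\l^{\t,\o})$ on $[0,T-\t(\o)]$. Since $\si^{\t,\o}$ still satisfies Assumption \ref{assum:si} and $\l^{\t,\o}$ remains bounded, Step 1 applied to these shifted coefficients gives $\dbP^{\t,\o}_{\si,\l}=\dbP_{\si^{\t,\o},\l^{\t,\o}}$ on $\cF_{T-\t(\o)}$, which is the desired equality.

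\textbf{Main obstacle.} The real work is in Step 3: one must match the concatenation convention underlying Lemma \ref{lemma full} with the shifted coefficients, and ensure that the exceptional null set can be chosen independently of the test functional. The latter is handled by the countable-dense argument already built into the proof of Lemma \ref{lemma full}; the former is routine but demands careful bookkeeping. The martingale-problem uniqueness used in Step 1 is essentially a restatement of Lemma \ref{lem:weaksolution} and poses no independent difficulty.
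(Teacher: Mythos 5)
Your proposal is correct and follows essentially the same route as the paper: the paper likewise characterizes $\dbP_{\si,\l}$ as the unique measure making $M_t=B_t-\int_0^t(\si\l)_s(B)\,ds$ and $N_t=M_tM_t^{\rm T}-\int_0^t(\si\si^{\rm T})_s(B)\,ds$ martingales, applies Lemma \ref{lemma full} to transfer the martingale property to the r.c.p.d., and identifies the shifted coefficients to conclude. Your version merely spells out the componentwise application of Lemma \ref{lemma full} and the coefficient bookkeeping that the paper leaves implicit.
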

\proof  First, denote 
\beaa
M_t := B_t- \int_0^t (\si\l)_s (B_\cd)ds,\q N_t := M_tM_t^T - \int_0^t (\si\si^T)_s(B_\cd) ds.
\eeaa
By the uniqueness of weak solution of SDE \reff{Xweak} we see that a probability measure $\dbP$ is equal to $\dbP_{\si,\l}$ if and only if 
 both $M$ and $N$ are $\dbP$-martingales. Note that $M$ and $N$ are continuous. By Lemma \ref{lemma full}, for $\dbP_{\si,\l}$-a.e. $\o$,  it holds that
\beaa
M^{\t,\o}_t = B_t- \int_0^t (\si\l)^{\t,\o}_s (B_\cd)ds,\q N^{\t,\o}_t = (M_tM_t^T)^{\t,\o}_t - \int_0^t (\si\si^T)^{\t,\o}_s(B_\cd) ds
\eeaa
are $\dbP^{\t,\o}_{\si,\l}$-martingales, which implies that $\dbP^{\t,\o}_{\si,\l} = \dbP_{\si^{\t,\o}, \l^{\t,\o}}$.
\qed

We now introduce an important family of probability measures on $\O$: for $L\ge 0$ and $(t,\o) \in \Th$,
 \bea
 \label{cP}
 \cP_L^{t,\o}
 :=
 \Big\{\dbP_{\si^{t,\o},\l}: \l\in \dbL_L(\dbF)\Big\}~~\mbox{where}~~\dbL_L(\dbF):=\{\l\in \dbL^0(\dbF): |\l|\le L\},
 \quad\mbox{and}\quad
 \cP_L:=\cP_L^{0,0},
 \eea
 and the associated nonlinear expectations
 \bea
 \label{cE}
 \ol\cE_L^{t,\o}:=\sup_{\dbP\in\cP_L^{t,\o}}\dbE^{\dbP},\q
 \ul\cE_L^{t,\o}:=\inf_{\dbP\in\cP_L^{t,\o}}\dbE^{\dbP}, 
 &\mbox{and}& 
 \ol\cE_L := \ol \cE_L^{0,0},\q \ul \cE_L := \ul \cE^{0,0}_L.
 \eea
 Unlike \cite{ETZ0, ETZ1, ETZ2} where mutually singular measures are considered for fully nonlinear PPDEs, here all measures $\dbP\in \cup_{L>0} \cP_L$ are equivalent to $\dbP_\si$. In particular, for $\l\in \dbL_L(\dbF)$ and $\xi\in \dbL^0(\cF_T)$, we have,  using the notations in \reff{Xsi} and \reff{Psil},
 \beaa
 \dbE^{\dbP_{\si,\l}}[|\xi|] =  \dbE^{\dbP_{\l}}[|\xi(X^\si)|] = \dbE^{\dbP_0}[M^\l_T|\xi(X^\si)|] \le C \Big(\dbE^{\dbP_0}[|\xi(X^\si)|^{1+\e}]\Big)^{1\over 1+\e} =  C \Big(\dbE^{\dbP_\si}[|\xi|^{1+\e}]\Big)^{1\over 1+\e},
 \eeaa
for some constant $C=C_{L,\e}$.  That is,
 \bea
 \label{cEest}
 \ol\cE_L[|\xi|] \le C_{L,\e} \Big(\dbE^{\dbP_\si}[|\xi|^{1+\e}]\Big)^{1\over 1+\e}.
 \eea
 A direct consequence of this is the following  convergence theorem, which makes some analysis in this paper much easier than that in \cite{ETZ0, ETZ1, ETZ2}.

\begin{prop}
\label{prop:DCT}
Assume $\xi_n, \xi \in \dbL^0(\cF_T)$, $\xi_n \to \xi$ in probability $\dbP_\si$, and $\sup_n \dbE^{\dbP_\si}[|\xi_n|^{1+\e}] <\infty$  for some $\e>0$. Then $\lim_{n\to\infty}\ol \cE_L[|\xi_n-\xi|] = 0$ for all $L>0$.
\end{prop}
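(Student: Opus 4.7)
The plan is to apply the moment estimate \reff{cEest} to $\xi_n - \xi$ and reduce the claim to an $L^{1+\e'}(\dbP_\si)$-convergence statement for some $\e' \in (0,\e)$, which is then handled by a standard uniform integrability argument.

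First I would fix $\e' \in (0,\e)$ and invoke \reff{cEest} for $\xi_n - \xi$ to get
\[
\ol\cE_L\big[|\xi_n-\xi|\big]
\;\le\;
C_{L,\e'}\,\Big(\dbE^{\dbP_\si}\big[|\xi_n-\xi|^{1+\e'}\big]\Big)^{1/(1+\e')},
\]
so it suffices to show $\dbE^{\dbP_\si}\big[|\xi_n-\xi|^{1+\e'}\big]\to 0$.

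Next, I would establish that the family $\{|\xi_n-\xi|^{1+\e'}\}_n$ is uniformly integrable under $\dbP_\si$. By Fatou's lemma, $\dbE^{\dbP_\si}[|\xi|^{1+\e}]\le\liminf_n \dbE^{\dbP_\si}[|\xi_n|^{1+\e}]<\infty$, so Minkowski's inequality yields $\sup_n\dbE^{\dbP_\si}[|\xi_n-\xi|^{1+\e}]<\infty$. Since the exponent $(1+\e)/(1+\e')>1$, de la Vall\'ee-Poussin (or simply $L^p$-boundedness for $p>1$) gives the uniform integrability of $\{|\xi_n-\xi|^{1+\e'}\}_n$.

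Finally, convergence in $\dbP_\si$-probability of $\xi_n$ to $\xi$ implies $|\xi_n-\xi|^{1+\e'}\to 0$ in $\dbP_\si$-probability, and combined with uniform integrability this yields $\dbE^{\dbP_\si}\big[|\xi_n-\xi|^{1+\e'}\big]\to 0$, hence $\ol\cE_L[|\xi_n-\xi|]\to 0$. The proof is essentially immediate once \reff{cEest} is available; the only real ``idea'' is the cushion $\e'<\e$, which converts the a priori moment bound into uniform integrability at the lower exponent. No serious obstacle is expected.
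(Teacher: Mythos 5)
Your proof is correct and follows exactly the route the paper intends: the paper offers no separate argument, presenting the proposition as a direct consequence of the moment estimate \reff{cEest}, and your use of a cushion exponent $\e'<\e$ together with the standard uniform-integrability/Vitali argument is precisely the omitted elaboration.
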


\section{Viscosity solutions of semilinear path dependent PDEs}
\label{sec:semi PPDE}

The objective of this paper is the semilinear path dependent PDEs \reff{ppde-intro}, which we rewrite as:
 \bea\label{PPDE}
 &-\cL u_t(\o)-F\big(t,\o,u_t(\o), \si^T_t(\o)\pa_\o u_t(\o)\big)=0,\q (t,\o) \in [0, T)\times \O,&\\
 &\mbox{where}~\cL u_t(\o):=\pa_t u_t(\o)+\frac12\text{Tr}[\si_t(\o)\si^{\rm T}_t(\o)\pa_{\o\o}^2 u_t(\o)],&\nonumber
 \eea
and the nonlinearity $F: (t,\o,y,z)\in\Theta\times\dbR\times\dbR^d \to \dbR$ is $\dbF$-progressively measurable in all variables. We shall assume

\begin{assum}\label{assum:F}
{\rm (i)}\quad $F$ is uniformly $L_0-$Lipschitz continuous in $(y,z)$, for some $L_0\ge 0$, i.e.
 \beaa
 |F(\cd,y,z)-F(\cd,y',z')|
 \le
 L_0\left(|y-y'|+|z-z'|\right)
 &\mbox{for all}&
 y,y'\in\dbR,~z,z'\in\dbR^d,
 \eeaa
{\rm (ii)}\quad There exists $F^0\in C^0(\Th)$ such that $|F(\cd, 0,0)|\le F^0$. 
\end{assum}

\subsection{Definition via test functions}

In the present semilinear case, we shall use the following notion of smoothness of processes.

\begin{defn}[Smooth processes]\label{def:C12}
We say that $u\in C^{1,2}(\Theta)$, if $u\in C^0(\Theta,\dbR)$ and there exist processes $\L,Z$ in $C^0(\Theta,\dbR)$ and $C^0(\Theta,\dbR^d)$, respectively, such that: for each $(t,\o)\in\Th$,
 \beaa
 u^{t,\o}_s-u_t(\o)
 &=&
 \int_0^s \L^{t,\o}_rdr + \int_0^s Z^{t,\o}_r\cdot dB_r~~\mbox{for all}~~s\in[0,T-t],
 ~\dbP_{\si^{t,\o}}\mbox{-a.s.}
 \eeaa
We denote $\cL u_t(\o):=\L_t(\o)$, $\partial_\omega u_t(\o):=Z_t(\o)$.
\end{defn}

\begin{rem}
\label{rem:Ito}{\rm
(i) Notice that all measures in $\cup_{L>0} \cP^{t,\o}_L$ are equivalent to $\dbP_{\si}^{t,\o}$. Then for $u\in C^{1,2}(\Theta)$, by definition the following functional It\^{o} formula in the spirit of Dupire holds:
\bea
\label{Ito}
u^{t,\o}_s-u_t(\o)
 =
 \int_0^s (\cL u)^{t,\o}_r dr + \int_0^s (\pa_\o u)^{t,\o}_r\cdot dB_r, ~~s\in[0,T-t],
 ~\dbP\mbox{-a.s. for all} ~\dbP\in \cup_{L>0} \cP^{t,\o}_L.
 \eea

(ii) Unlike \cite{EKTZ, ETZ1,ETZ2} where $\pa_t u$ and $\pa^2_{\o\o} u$ are defined separately, here they appear jointly in the term $\pa_t u + {1\over 2} Tr(\si\si^T \pa^2_{\o\o} u)$, following Dupire's functinal It\^o formula. Since $\si$ is given, for our purpose we do not need to distinguish the two terms and thus identify $\cL u_t(\o)$ directly with $\pa_t u + {1\over 2} Tr(\si\si^T \pa^2_{\o\o} u)$.
\qed}
\end{rem}

We introduce the sets of test processes for subsolutions and supersolutions:
 \bea
 \label{cA}
 \left.\ba{lll}
 \underline{\cA}_L u_t(\o)
 :=
 \Big\{\varphi\in C^{1,2}(\Theta):
          (\varphi-u)_t(\o)
          =
          \min_{\tau\in\cT_\ch} \underline{\cE}_L^{t,\o}\big[(\varphi-u)^{t,\o}_{\tau}\big]
          ~\mbox{for some}~\ch\in\cT^+_{T-t}
 \Big\},
 \\
 \overline{\cA}_L u_t(\omega)
 :=
 \Big\{\varphi\in C^{1,2}(\Theta):
          (\varphi-u)_t(\o)
          =
          \max_{\tau\in\cT_\ch} \overline{\cE}_L^{t,\o}\big[(\varphi-u)^{t,\o}_{\tau}\big]
          ~\mbox{for some}~\ch\in\cT^+_{T-t}
 \Big\}.
 \ea\right.
 \eea
The stopping time $\ch$ implies that the test processes are locally defined at $(t,\o)$, and in particular the integrability in \reff{cA} will always be guaranteed. For a test function $\f\in\ul\cA_L u_t(\o)\cup \ol\cA_Lu_t(\o)$, we shall refer to a corresponding $\ch$ as its localizing time. Note that in our definition, a test function is tangent to $u$ at a point $(t,\o)$ in mean value (under a family of probability measures), which is different from the corresponding notion in Crandall and Lions \cite{CrandallLions}.

\begin{defn}[Viscosity solution of path-dependent PDE]\label{def:visco} Let $u\in C^0(\Theta,\dbR)$.
\\
{\rm (i)} $u$ is a $\cP_L$-viscosity subsolution of \eqref{PPDE} if for any $(t,\o)\in[0,T)\times\O$:
 \beaa
 -\cL\varphi_t(\o)
         -F\big(t,\o,u_t(\o), \si^T_t(\o)\partial_\omega\varphi_t(\o)\big)
 \le
 0
 &\mbox{for all}&
 \f\in\ul\cA_L u_t(\o).
 \eeaa
{\rm (ii)} $u$ is a $\cP_L$-viscosity supersolution of \eqref{PPDE} if for any $(t,\o)\in[0,T)\times\O$:
 \beaa
 -\cL\varphi_t(\o)
         -F\big(t,\o,u_t(\o), \si^T_t(\o)\partial_\omega\varphi_t(\o)\big)
 \ge
 0
 &\mbox{for all}&
 \f\in\ol\cA_L u_t(\o).
 \eeaa
{\rm (iii)} A $\cP_L$-viscosity solution of \eqref{PPDE} is both a $\cP_L$-subsolution and a $\cP_L$-supersolution.
\end{defn}

\begin{rem}
\label{rem:test}
{\rm (i) In \cite{ETZ1, ETZ2}, a larger and non-dominated set $\ol \cP_L$ which consists of mutually singular probability measures is used. The corresponding sets of test functions $\ul{\cA}^{\ol{\cP}_L}_L u$ and $\ol{\cA}^{\ol{\cP}_L}_L u$ are smaller there, and consequently a viscosity solution here must be a viscosity solution in the sense of \cite{ETZ1}, but not vice versa in general. Therefore, by putting more test functions in this paper, we are helping for the proof of uniqueness.

(ii) When $\si = I_d$ but under the above $\ol\cP_L$-definition, the wellposedness of the semilinear PPDE \reff{PPDE} is achieved in \cite{ETZ1, ETZ2} by using a different approach.  However, the general case with random $\si$ and under $\ol\cP_L$-definition does not fall into the framework of this paper  and does not satisfy the sufficient conditions for comparison principle in  \cite{ETZ2},  and its wellposedness is still open.
\qed}
\end{rem}

\subsection{Equivalent definition via semijets}

Following the standard theory of viscosity solutions for PDEs, we may also define viscosity solutions via semijets. In light of Definition \ref{def:C12} and Remark \ref{rem:Ito} (ii), we introduce the linear processes:
 \bea
 \label{Q}
 Q^{\a,\b}(t,\o)
 :=
 \a t+\b\cdot\omega_t,
 &\a\in\dbR,~\b\in\dbR^d,&
 \mbox{and}~(t,\o)\in\Theta.
 \eea

\begin{defn}[Semijets]\label{Def jet}
For $u\in C^0(\Theta,\dbR)$, the subjet and superjet of $u$ at $(t,\o)$ are defined as:
 \beaa
 \ul\cJ_L u_t(\o)
 &:=&
 \big\{(\a,\b)\in \dbR\times\dbR^d:~Q^{a,\b}\in\ul\cA_L u_t(\o)
 \big\};\\
   \ol\cJ_L u_t(\o)
 &:=&
 \big\{(\a,\b)\in \dbR\times\dbR^d:~Q^{a,\b}\in\ol\cA_L u_t(\o)
 \big\}.
 \eeaa
Moreover,  $cl(\ul\cJ_L u_t(\omega))$ and $ cl(\ol\cJ_L u_t(\omega))$ denote their closures.
 \end{defn}

\begin{rem}
\label{rem:semijet}
{\rm In the fully nonlinear case, one has to distinguish $\pa_t u$ and $\pa^2_{\o\o}u$, and accordingly one needs to introduce paraboloid processes:
\beaa
 Q^{\a,\b,\g}(t,\o)
 :=
 \a t+\b\cdot\omega_t + {1\over 2} \g \o_t \cd \o_t,
 &\a\in\dbR,~\b\in\dbR^d,~ \g \in \dbR^{d\times d}&
 \mbox{and}~(t,\o)\in\Theta.
 \eeaa
 See \cite{Survey} for more details. In the present semilinear case, one can easily show that the linear processes \reff{Q} is sufficient for our purpose.
 \qed}
 \end{rem}

\begin{prop}
\label{prop:equiv}
Let $u\in C^0(\Theta,\dbR)$. Then the following are equivalent: for any $(t,\o)\in [0, T)\times \O$,

\no {\rm (i)}\q $u$ is a $\cP_L$-viscosity subsolution of the path-dependent PDE \eqref{PPDE} at $(t,\o)$;

\no {\rm (ii)}\q $-\a-F\big(t,\omega,u_t(\omega),\si^T_t(\o)\b\big)\le 0$ for all $(\a,\b)\in\ul\cJ_L u_t(\omega)$;

\no {\rm (iii)}\q $-\a-F\big(t,\omega,u_t(\omega),\si^T_t(\o)\b\big)\le 0$ for all $(\a,\b)\in cl(\ul\cJ_L u_t(\omega))$. 
\end{prop}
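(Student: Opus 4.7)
My plan is to establish the cycle $(i)\Rightarrow (ii)\Rightarrow (iii)\Rightarrow (i)$. The implication $(i)\Rightarrow (ii)$ is immediate from the very definition of $\ul\cJ_L u_t(\o)$: if $(\a,\b)\in\ul\cJ_L u_t(\o)$ then $Q^{\a,\b}\in\ul\cA_L u_t(\o)$, and since the linear process $Q^{\a,\b}\in C^{1,2}(\Theta)$ satisfies $\cL Q^{\a,\b}_t(\o)=\a$ and $\pa_\o Q^{\a,\b}_t(\o)=\b$, applying (i) with $\f=Q^{\a,\b}$ gives (ii). For $(ii)\Rightarrow (iii)$, the Lipschitz continuity of $F$ in its $z$-argument (Assumption \ref{assum:F}) makes $-\a-F(t,\o,u_t(\o),\si^T_t(\o)\b)$ continuous in $(\a,\b)$, so the inequality passes from $\ul\cJ_L u_t(\o)$ to its closure.

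The substantive direction is $(iii)\Rightarrow (i)$. Given $\f\in\ul\cA_L u_t(\o)$, set $\a:=\cL\f_t(\o)$ and $\b:=\pa_\o\f_t(\o)$. My goal is to show $(\a,\b)\in cl(\ul\cJ_L u_t(\o))$, from which (iii) yields (i). I would achieve this by establishing that $(\a+\d,\b)\in\ul\cJ_L u_t(\o)$ for every $\d>0$ and then sending $\d\to 0^+$.

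Fix $\d>0$. By the functional It\^o formula of Remark \ref{rem:Ito}, $\dbP$-a.s.\ for any $\dbP\in\cup_{L>0}\cP^{t,\o}_L$,
$$
(\f-Q^{\a,\b})^{t,\o}_s-(\f-Q^{\a,\b})_t(\o)=\int_0^s\!\big((\cL\f)^{t,\o}_r-\a\big)dr+\int_0^s\!\big((\pa_\o\f)^{t,\o}_r-\b\big)\cd dB_r.
$$
The values $(\cL\f)^{t,\o}_0,(\pa_\o\f)^{t,\o}_0,\si^{t,\o}_0$ depend only on $\o|_{[0,t]}$ and equal the constants $\a,\b,\si_t(\o)$ on every path, so by continuity I can choose $\hat\ch\in\cT^+_{T-t}$, dominated by the localizing time of $\f$, such that on $[0,\hat\ch]$ both $|(\cL\f)^{t,\o}_\cd-\a|,|(\pa_\o\f)^{t,\o}_\cd-\b|\le\e$ and $|\si^{t,\o}_\cd|\le M$, for chosen $\e>0$ and a finite $M$. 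Under $\dbP=\dbP_{\si^{t,\o},\l}\in\cP^{t,\o}_L$, Girsanov (cf.\ Lemma \ref{lem:weaksolution}) gives $dB_r=\si^{t,\o}_r dW_r+\si^{t,\o}_r\l_r dr$ with $W$ a $\dbP$-Brownian motion; the $dW$-part is a bounded-integrand $\dbP$-martingale stopped at $\t\le\hat\ch$, and the Girsanov drift is bounded pathwise by $\e ML$, yielding
$$
\big|\dbE^\dbP[(\f-Q^{\a,\b})^{t,\o}_\t-(\f-Q^{\a,\b})_t(\o)]\big|\le C\e\,\dbE^\dbP[\t],\qquad C:=1+ML,
$$
uniformly over $\dbP\in\cP^{t,\o}_L$ and $\t\in\cT_{\hat\ch}$.

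Decomposing $(Q^{\a+\d,\b}-u)^{t,\o}_\t-(Q^{\a+\d,\b}-u)_t(\o)=\d\t+[(Q^{\a,\b}-\f)^{t,\o}_\t-(Q^{\a,\b}-\f)_t(\o)]+[(\f-u)^{t,\o}_\t-(\f-u)_t(\o)]$ and taking $\dbE^\dbP$ on each side gives, for every $\dbP\in\cP^{t,\o}_L$ and $\t\in\cT_{\hat\ch}$,
$$
\dbE^\dbP[(Q^{\a+\d,\b}-u)^{t,\o}_\t-(Q^{\a+\d,\b}-u)_t(\o)]\ge(\d-C\e)\dbE^\dbP[\t]+\dbE^\dbP[(\f-u)^{t,\o}_\t-(\f-u)_t(\o)].
$$
Choosing $\e:=\d/(2C)$ (and shrinking $\hat\ch$ accordingly) makes the first term non-negative pathwise; taking the infimum over $\dbP$ and using $\ul\cE_L^{t,\o}[(\f-u)^{t,\o}_\t-(\f-u)_t(\o)]\ge 0$ (from $\f\in\ul\cA_L u_t(\o)$) yields $\ul\cE_L^{t,\o}[(Q^{\a+\d,\b}-u)^{t,\o}_\t-(Q^{\a+\d,\b}-u)_t(\o)]\ge 0$. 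Since the expression vanishes at $\t=0$, the minimum over $\cT_{\hat\ch}$ is attained there, proving $(\a+\d,\b)\in\ul\cJ_L u_t(\o)$, as required. The main obstacle is controlling the mean of the stochastic integral under the non-Brownian measures $\dbP_{\si,\l}$: the Girsanov drift contributes an $O(\e)$ error that cannot be made exactly zero, and this is precisely why the derivative values $(\a,\b)$ can only be recovered in the closure, necessitating the formulation (iii) as a bridge back to (i).
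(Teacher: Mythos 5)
Your proposal is correct and follows essentially the same route as the paper: (i)$\Rightarrow$(ii) from the definition of the semijet, (ii)$\Rightarrow$(iii) by continuity of $F$ in $z$, and for (iii)$\Rightarrow$(i) you localize so that $\cL\f$, $\pa_\o\f$, $\si$ are within $\e$ of their initial values, bound the Girsanov drift of the stochastic integral by $O(\e)\,\dbE^\dbP[\t]$ uniformly over $\dbP\in\cP_L^{t,\o}$, and absorb this error into a positive perturbation of $\a$ to get $(\a+\d,\b)\in\ul\cJ_L u_t(\o)$, hence $(\a,\b)$ in the closure. The only differences from the paper's argument are cosmetic (you fix $\d$ and choose $\e=\d/(2C)$, the paper fixes $\e$ and perturbs by $[1+L(1+|\si_0|)]\e$; you keep general $(t,\o)$ rather than normalizing to $(0,0)$).
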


\proof Since $Q^{\a,\b}\in C^{1,2}(\Th,\dbR)$, clearly (i) implies (ii). Now assume (ii) holds true. For any $(\a,\b)\in cl(\ul\cJ_L u_t(\omega))$, there exist $(\a_n,\b_n)\in\ul\cJ_L u_t(\omega)$ such that $(\a_n, \b_n)\to (\a,\b)$. By (ii) we have $-\a_n-F\big(t,\omega,u_t(\omega),\si^T_t(\o)\b_n\big)\le 0$. Sending $n\to \infty$ we prove (iii).

It remains to prove that (iii) implies (i).  Let $(t,\o)\in[0,T)\times\Omega$ and $\varphi\in\underline\cA_L u_t(\omega)$ with localizing time $\ch\in \cT^+_{T-t}$. Without loss of generality, we take $(t,\omega)=(0,0)$ and $(\varphi-u)_0=0$. Denote
 \bea
 \label{ab}
 \a:=\cL\f_0,
 &\b:=\partial_\omega\varphi_0.
 \eea
For any $\e>0$, since $\si\in C^0(\Th)$  and  $\f$ is smooth, by otherwise choosing a smaller $\ch$ we may assume 
 \beaa
|\si_t-\si_0|\le 1,\q |\cL\f_t-\a|\le\eps,
 &|\partial_\omega\varphi_t-\b|\le\eps,& 0\le t\le \ch.
 \eeaa
Denote $\a_\eps:=\a+[1+L(1+|\si_0|)]\e$. Then, for all  $\tau\in\cT_\ch$,
 \beaa
 &&(Q^{\a_\eps,\beta}-u)_0
 -\underline\cE_L\!\big[(Q^{\a_\eps,\beta}-u)_{\tau}\big] = \ol\cE_L\big[(u - u_0 - Q^{\a_\e,\b})_{\t}\big]\\
 &\le&\ol\cE_L\big[(u -\f)_{\t}\big]+
 \overline\cE_L\!\big[(\varphi\!-\!\varphi_0\!-\!Q^{\a_\eps,\beta})_{\tau}\big]
 \le 
 \overline\cE_L\Big[\int_0^{\t}
                                  (\cL\f_s -\! \a_\eps)ds
                                  +(\partial_\omega\varphi_s - \b) \cd dB_s
                          \Big].
 \eeaa
 where the last inequality thanks to the fact that $\varphi\in\underline\cA_L u_0 $. Note that, for any $\l\in \dbL_L(\dbF)$, 
 \beaa
&&  \dbE^{\dbP_{\si,\l}}\Big[\int_0^{\t}
                                  (\cL\f_s -\! \a_\eps)ds
                                  +\int_0^\t (\partial_\omega\varphi_s - \b) \cd dB_s
                          \Big] \\
 &=& \dbE^{\dbP_{\si,\l}}\Big[\int_0^{\t}
                                  (\cL\f_s - \a)ds
                                  +\int_0^\t (\partial_\omega\varphi_s - \b) \cd \si_s \l_sds - [1+L(1+|\si_0|)]\e\t
                          \Big]\\
                          &\le& \dbE^{\dbP_{\si,\l}}\Big[\int_0^{\t}[
                                 \e 
                                  +\e L(1+|\si_0|) ] ds - [1+L(1+|\si_0|)]\e\t
                          \Big] =0.
 \eeaa
 By the arbitrariness of $\l\in \dbL_L(\dbF)$, we see that 
 \beaa
 (Q^{\a_\eps,\beta}-u)_0
 -\underline\cE_L\!\big[(Q^{\a_\eps,\beta}-u)_{\tau}\big]  \le  \overline\cE_L\Big[\int_0^{\t}
                                  (\cL\f_s -\! \a_\eps)ds
                                  +(\partial_\omega\varphi_s - \b) \cd dB_s
                          \Big] \le 0.
                          \eeaa             
That is,  $(\a_\eps,\b)\in\underline\cJ_L u_0$ and thus $(\a,\b)\in cl(\underline\cJ_L u_0)$. Now it follows from (iii) that
\beaa
-\a-F(0,0,u_0,\si^T_0\b)\le 0,
\eeaa
 which, together with \reff{ab}, exactly means (i).
\qed

The following simple results will be useful later.
\begin{prop}\label{new sum derv} Let $u, u'\in C^0(\Th, \dbR)$ and $(t,\o) \in \Th$.

\no {\rm (i)}\q $(\a,\b)\in \ul \cJ_L u_t(\o)$ if and only if $(-\a, -\b) \in  \ol \cJ_L (-u)_t(\o)$.

\no {\rm (ii)}\q If $(\a,\b)\in \ul \cJ_L u_t(\o)$, $(\a',\b')\in \ul \cJ_L u'_t(\o)$, then $(\a+\a',\b+\b')\in  \ul \cJ_L (u+u')_t(\o)$.

\no Moreover, the results remain true if we replace the semi-jets with their closures. 
\end{prop}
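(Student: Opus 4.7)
The plan is to exploit three elementary algebraic facts about the objects appearing in Definition \ref{Def jet}: first, the linear test processes are additive in their parameters, $Q^{\a+\a',\b+\b'}=Q^{\a,\b}+Q^{\a',\b'}$ and $Q^{-\a,-\b}=-Q^{\a,\b}$; second, the nonlinear expectations satisfy the duality $\ol\cE_L^{t,\o}[-\xi]=-\ul\cE_L^{t,\o}[\xi]$, which is immediate from $\sup(-\,\cdot\,)=-\inf(\cdot)$; third, $\ul\cE_L^{t,\o}$ is superadditive, since for every $\dbP\in\cP_L^{t,\o}$ one has $\dbE^{\dbP}[\xi+\eta]=\dbE^{\dbP}[\xi]+\dbE^{\dbP}[\eta]\ge\ul\cE_L^{t,\o}[\xi]+\ul\cE_L^{t,\o}[\eta]$, whence taking the infimum over $\dbP$ yields $\ul\cE_L^{t,\o}[\xi+\eta]\ge\ul\cE_L^{t,\o}[\xi]+\ul\cE_L^{t,\o}[\eta]$.

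For (i), let $(\a,\b)\in\ul\cJ_L u_t(\o)$ with localizing time $\ch\in\cT^+_{T-t}$. The defining identity
\[
(Q^{\a,\b}-u)_t(\o)=\min_{\t\in\cT_\ch}\ul\cE_L^{t,\o}[(Q^{\a,\b}-u)^{t,\o}_\t]
\]
becomes, upon multiplication by $-1$ and invocation of the duality,
\[
(Q^{-\a,-\b}-(-u))_t(\o)=\max_{\t\in\cT_\ch}\ol\cE_L^{t,\o}[(Q^{-\a,-\b}-(-u))^{t,\o}_\t],
\]
which is precisely $(-\a,-\b)\in\ol\cJ_L(-u)_t(\o)$. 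The reverse implication follows by applying the same transformation with $-u$ in place of $u$.

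For (ii), let $\ch_1,\ch_2\in\cT^+_{T-t}$ be localizing times associated with $Q^{\a,\b}\in\ul\cA_L u_t(\o)$ and $Q^{\a',\b'}\in\ul\cA_L u'_t(\o)$, and set $\ch:=\ch_1\we\ch_2\in\cT^+_{T-t}$. Since $\t\equiv 0$ belongs to $\cT_{\ch_i}$ and achieves the value $(Q^{\a,\b}-u)_t(\o)$, the defining minimum condition is equivalent to the one-sided inequality $\ul\cE_L^{t,\o}[(Q^{\a,\b}-u)^{t,\o}_\t]\ge(Q^{\a,\b}-u)_t(\o)$ for all $\t\in\cT_{\ch_1}$, and similarly for the primed pair. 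For $\t\in\cT_\ch\subset\cT_{\ch_1}\cap\cT_{\ch_2}$, superadditivity of $\ul\cE_L^{t,\o}$ combined with these two inequalities delivers $\ul\cE_L^{t,\o}[(Q^{\a+\a',\b+\b'}-(u+u'))^{t,\o}_\t]\ge(Q^{\a+\a',\b+\b'}-(u+u'))_t(\o)$, which is exactly $(\a+\a',\b+\b')\in\ul\cJ_L(u+u')_t(\o)$ with localizing time $\ch$.

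The closure statements are immediate from continuity of the maps $(\a,\b)\mapsto(-\a,-\b)$ and $((\a,\b),(\a',\b'))\mapsto(\a+\a',\b+\b')$: approximate each element of the relevant closure by a sequence drawn from the semijet itself, apply (i) or (ii) elementwise, and pass to the limit in $\dbR\times\dbR^d$. I do not anticipate any serious obstacle here; the only subtle point worth flagging is the reduction of the ``min'' in the definition of $\ul\cA_L$ to a single-sided inequality, which is precisely what lets the merely superadditive (rather than linear) operator $\ul\cE_L^{t,\o}$ produce a clean proof of the additivity statement (ii) without needing any cancellation between $\ul\cE_L^{t,\o}$ and $\ol\cE_L^{t,\o}$.
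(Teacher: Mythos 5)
Your proposal is correct and follows essentially the same route as the paper: part (ii) is the one-sided inequality reformulation of the tangency condition combined with sub/superadditivity of the nonlinear expectation (the paper phrases it via subadditivity of $\ol\cE_L$ after flipping signs, you via superadditivity of $\ul\cE_L$ — the same argument), while (i) and the closure statements are handled by the same evident duality and limiting arguments the paper leaves as obvious. No gaps.
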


\begin{proof} (i) is obvious, and we can easily extend the results from semi-jets to their closures. It remains to prove  (ii). Indeed, by definition, there exists a common $\ch\in \cT^+_{T-t}$ such that
\beaa
u_t(\o) \ge \ol\cE_L[(u^{t,\o}-Q^{\a,\b})_{\t}],\q u'_t(\o) \ge \ol\cE_L[((u')^{t,\o}-Q^{\a',\b'})_{\t}],\q \forall \t\in \cT_\ch.
\eeaa
Then, by the sub-linearity of $\ol\cE_L$ we have
\beaa
(u+u')_t(\o) \ge \ol\cE_L\Big[(u^{t,\o}-Q^{\a,\b})_{\t}+((u')^{t,\o}-Q^{\a',\b'})_{\t}\Big] =\ol\cE_L\Big[\big([u+u']^{t,\o}-Q^{\a+\a',\b+\b'})_{\t}\Big].
\eeaa
This means that $(\a+\a',\b+\b')\in  \ul \cJ_L (u+u')_t(\o)$.
\end{proof}

\subsection{Punctual differentiability}

When $u \in C^{1,2}(\Th, \dbR)$, it is immediately seen that $(\cL u_t(\o), \pa_\o u_t(\o)) \in cl(\ul\cJ_L u_t(\o))$ for $L\ge L_0$. Moreover, similar to \cite{ETZ1}, and also combining the arguments in Proposition \ref{prop:equiv}, one can easily show that the following are equivalent:

$\bullet$ $u$ is a classical subsolution at $(t,\o)$;

$\bullet$ $u$ is a viscosity subsolution at $(t,\o)$.

\ms
Following Caffarelli and Cabre \cite{CaffarelliCabre}, we introduce a notion of differentiation which is weaker than the path derivatives and will be crucial for the proof of our main comparison result.

\begin{defn}
\label{defn:punctual}
Let $\f\in \dbL^0(\dbF)$. We say $\f$ is $\cP_L$-punctually $C^{1,2}$ at $(t,\o)$, if
 \beaa
 \cJ_L\f_t(\o)
 :=
 \text{\rm cl}\Big(\ul\cJ_L\f_t(\o)\Big)
 \cap
 \text{\rm cl}\Big(\ol\cJ_L\f_t(\o)\Big)
 &\neq&
 \emptyset.
 \eeaa
\end{defn}

The following result is straightforward.
\begin{prop}\label{prop:punctual}
Let $u\in C^0(\Theta,\dbR)$.

\no {\rm (i).}\q If $u\in C^{1,2}(\Th,\dbR)$, then $u$ is $\cP_L$-punctually $C^{1,2}$ at all $(t,\o)$ with $(\cL u_t(\o), \pa_\o u_t(\o)) \in \cJ_L u_t(\o)$;

\no {\rm (ii).}\q If $u$ is $\cP_L$-punctually $C^{1,2}$ at  $(t,\o)$  and is a $\cP_L$-viscosity solution (resp. subsolution, supersolution) of the path-dependent PDE (\ref{PPDE}) at $(t,\o)$, then for any $(\a,\b)\in\cJ_L u_t(\o)$  we have
 \beaa
-\a-F\left(t,\o,u_t(\o),\si^T_t(\o)\b\right)
&=~~\mbox{(resp. $\le$, $\ge$)}&
0.
 \eeaa
\end{prop}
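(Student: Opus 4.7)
The plan is to reduce both parts to Proposition \ref{prop:equiv} and its symmetric counterpart for supersolutions, so no new calculations beyond what has already been done in the proof of Proposition \ref{prop:equiv} are required.

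For part (i), the key observation is that a smooth $u$ is its own test process from both sides: for any localizing $\ch\in\cT^+_{T-t}$ and any $\t\in\cT_\ch$, one trivially has $(u-u)^{t,\o}_\t\equiv 0$, so $u\in\ul\cA_L u_t(\o)\cap\ol\cA_L u_t(\o)$ with localizing time, say, $\ch\equiv T-t$. I then rerun the calculation from the proof of Proposition \ref{prop:equiv}, direction (iii)$\Rightarrow$(i), with $\f=u$, $\a=\cL u_t(\o)$, and $\b=\pa_\o u_t(\o)$. That computation produces, for each $\eps>0$, a constant $C=1+L(1+|\si_t(\o)|)$ such that $(\a+C\eps,\b)\in\ul\cJ_L u_t(\o)$; the mirror computation (swap $\ul\cE_L$ with $\ol\cE_L$ and the sign of $\l\in\dbL_L(\dbF)$) yields $(\a-C\eps,\b)\in\ol\cJ_L u_t(\o)$. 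Sending $\eps\to 0$ puts $(\a,\b)$ in both closures, hence in $\cJ_L u_t(\o)$.

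For part (ii), I invoke Proposition \ref{prop:equiv} (iii), which says that if $u$ is a $\cP_L$-viscosity subsolution at $(t,\o)$, then
\[
-\a'-F\bigl(t,\o,u_t(\o),\si^T_t(\o)\b'\bigr)\le 0
\quad\text{for all }(\a',\b')\in cl(\ul\cJ_L u_t(\o)),
\]
together with its symmetric version for supersolutions (which gives the reverse inequality on $cl(\ol\cJ_L u_t(\o))$; this is proved by the same argument after flipping the sign of $\l$). By the very definition of $\cJ_L u_t(\o)$ as the intersection of the two closures, any $(\a,\b)\in\cJ_L u_t(\o)$ inherits the relevant inequality in each case, and both inequalities simultaneously when $u$ is a viscosity solution, giving equality.

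The only mild subtlety is verifying that the two symmetric statements I appeal to genuinely follow from arguments already written: the supersolution analogue of Proposition \ref{prop:equiv}, and the ``mirror'' jet inclusion in part (i). Both reduce to the fact that $\ul\cE_L$ and $\ol\cE_L$ differ only by reversing the sign of $\l\in\dbL_L(\dbF)$, so the computations transpose verbatim; no genuinely new obstacle arises.
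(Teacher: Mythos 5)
Your proof is correct and is exactly the argument the paper intends: the paper omits the proof of this proposition as ``straightforward,'' having already noted just above it that for $u\in C^{1,2}$ the pair $(\cL u_t(\o),\pa_\o u_t(\o))$ lies in $cl(\ul\cJ_L u_t(\o))$ via the computation in Proposition \ref{prop:equiv}, and your part (i) is the two-sided version of that computation with $\f=u$, while your part (ii) is the direct combination of Proposition \ref{prop:equiv}(iii) and its (verbatim-symmetric) supersolution counterpart with the definition of $\cJ_L$ as the intersection of the two closed semijets.
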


\section{Comparison result}\label{sec:present_mainresults}

We first introduce some notations for appropriate spaces.

\no $\bullet$ $~\dbS^{t,\o}_2 :=\big\{ Y\in \dbL^0(\dbF): Y~\mbox{is continuous in time, $\dbP_{\si^{t,\o}}$-a.s. and}~  \dbE^{\dbP_{\si^{t,\o}}} \big[ \sup_{0\le s\le T-t} |Y_s|^2 \big] <\infty\big\}$;

\ms

\no $\bullet$ $~\dbS^2 := \dbS^{0,0}_2$;

\ms 
\no $\bullet$   $~C^0_2(\Th) := \big\{ u\in C^0(\Th): u^{t,\o} \in \dbS^{t,\o}_2 ~\mbox{for all}~(t,\o)\in\Theta\big\}$;  

\ms

\no $\bullet$ $~\dbH^2 := \big\{Z\in \dbL^0(\dbF, \dbR^d) :  \dbE^{\dbP_{\si}} \big[\int_0^{T} |\si^T_s Z_s|^2ds \big] <\infty\big\}$;

\ms

\no $\bullet$ $~\dbI^2 := \big\{K\in \dbS^2 : K~\mbox{is increasing,  $\dbP_{\si}$-a.s. and}~  K_0=0\big\}$;

\no In particular, it follows from Assumption \ref{assum:si} and standard estimates for SDEs that $\si\in C^0_2(\Th)$.

\subsection{Main result}

The main focus of this paper is the following comparison result. 

\begin{thm}\label{thm:comparison}
Let Assumption \ref{assum:F} hold true, and $u,v\in C^0_{2}(\Theta)$ be $\cP_L$-viscosity subsolution and  supersolution, respectively, of PPDE (\ref{PPDE}) for some $L\geq L_0$. If $u_{T}\leq v_{T}$ on $\O$, then $u\le v$ on $\Theta$.
\end{thm}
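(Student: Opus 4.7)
The plan is to linearize the semilinear PDE, apply Proposition \ref{prop:punctual}(ii) Leb$\otimes\dbP_\si$-a.e.\ to the difference $w:=u-v$, and convert the resulting pointwise drift inequality into a genuine submartingale statement via Girsanov and an exponential discount. The terminal condition $u_T\le v_T$ then closes the argument.

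\textbf{Step 1 (a.e.\ pointwise PDE).} By the Aleksandroff-type smoothness theorem promised in the introduction (every semimartingale in $C^0_2(\Th)$ is $\cP_L$-punctually $C^{1,2}$ Leb$\otimes\dbP_\si$-a.e.), both $u$ and $v$ are punctually $C^{1,2}$ off a Leb$\otimes\dbP_\si$-null subset of $\Theta$. At any such point $(t,\o)$, for $(\a^u,\b^u)\in\cJ_L u_t(\o)$ and $(\a^v,\b^v)\in\cJ_L v_t(\o)$, Proposition \ref{prop:punctual}(ii) yields
\[
-\a^u-F(t,\o,u_t,\si_t^T\b^u)\le 0,\qquad -\a^v-F(t,\o,v_t,\si_t^T\b^v)\ge 0.
\]

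\textbf{Step 2 (linearization).} By Assumption \ref{assum:F}(i) there exist $\dbF$-progressively measurable $A,C$ with $|A|,|C|\le L_0$ such that $F(t,\o,u_t,\si_t^T\b^u)-F(t,\o,v_t,\si_t^T\b^v)=A_tw_t+C_t\cd\si_t^T(\b^u-\b^v)$. Subtracting the two inequalities of Step 1 and placing $(\a^w,\b^w):=(\a^u-\a^v,\b^u-\b^v)$ in the closure of $\underline\cJ_{2L}w_t(\o)$ via Proposition \ref{new sum derv}, we obtain the drift bound
\[
\a^w_t+A_tw_t+C_t\cd\si_t^T\b^w_t\ge 0\qquad\mbox{Leb}\otimes\dbP_\si\mbox{-a.e.}
\]

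\textbf{Step 3 (Girsanov and submartingale).} Set $\wt w_t:=e^{\int_0^tA_s ds}w_t$ and work under $\dbP_{\si,C}\in\cP_{L_0}\subset\cP_L$, under which the canonical process carries an additional $\si_tC_t\,dt$ drift. Invoking the functional It\^o formula (Remark \ref{rem:Ito}) and identifying the drift density of the absolutely continuous part of $w$ with $\a^w$ at a.e.\ point, the drift of $\wt w$ under $\dbP_{\si,C}$ equals $e^{\int_0^tA_s ds}\bigl(\a^w_t+A_tw_t+C_t\cd\si_t^T\b^w_t\bigr)\ge 0$, so $\wt w$ is a $\dbP_{\si,C}$-submartingale on $[0,T]$ (the necessary integrability comes from $u,v\in C^0_2(\Theta)$, boundedness of $e^{\int A}$, and the bound \reff{cEest}). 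Therefore
\[
w_0=\wt w_0\le\dbE^{\dbP_{\si,C}}[\wt w_T]=\dbE^{\dbP_{\si,C}}\bigl[e^{\int_0^TA_s ds}(u_T-v_T)\bigr]\le 0,
\]
and the same argument applied to $(u^{t,\o},v^{t,\o})$ under $\dbP_{\si^{t,\o},C^{t,\o}}$ (legitimate by Proposition \ref{prop:rcpd}) gives $u_t(\o)\le v_t(\o)$ at every $(t,\o)\in\Theta$.

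\textbf{Main obstacle.} The genuinely hard step is the bridge between the a.e.\ pointwise sub-jet inequality of Step 2 and the global submartingale decomposition of $\wt w$ used in Step 3. This requires (a) the path-dependent Aleksandroff regularity theorem asserting that every $C^0_2$-semimartingale is $\cP_L$-punctually $C^{1,2}$ Leb$\otimes\dbP_\si$-a.e.\ (the counterpart of the Caffarelli--Cabr\'e punctual $C^2$ regularity of convex functions), together with (b) the identification, at each such point, of the Doob--Meyer drift density of $w$ under each $\dbP\in\cP_L$ with the component $\a^w$ of its closed sub-jet. These two ingredients are the genuine content of the paper; given them, the linearization and Girsanov computations above are routine.
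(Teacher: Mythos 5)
Your route is genuinely different from the paper's. The paper does not integrate the a.e.\ jet inequality directly: it first shows (Proposition \ref{prop:wsubsol}) that $w=u-v$ is a viscosity subsolution of the linearized equation $-\cL w-L|w|-L|\si^T\pa_\o w|\le 0$ --- which requires handling an \emph{arbitrary} element of $\ul\cJ_L w$, hence the Snell-envelope contact-point argument there --- and then invokes the partial comparison of Proposition \ref{prop:partialcomparison} against the classical supersolution $0$. Your plan works only with the canonical jet element $(\dot A_t,Z_t)$ supplied by Proposition \ref{prop: submartingale PC2} and converts the resulting a.e.\ drift inequality into a global submartingale property by Girsanov and discounting; if completed, this is a legitimate and arguably shorter alternative to Steps 1--2 of the proof of Proposition \ref{prop:wsubsol}.

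However, two steps are genuinely missing. First, Step 1 presupposes that $u$ and $v$ are $\dbP_\si$-semimartingales; this is not among the hypotheses and is itself a consequence of the viscosity property: one needs the optimal stopping Lemma \ref{lem:maximumpoint} to show that $\hat u=u+\int_0^\cdot(L_0|u_s|+F^0_s+1)ds$ is a pathwise $\ol\cE_L$-submartingale (Lemma \ref{v a.s. PC2}), and then Proposition \ref{prop:cEsub} together with Corollary \ref{cor:mrt} to extract the decomposition $du=Z^u\cdot dB+dA^u$. So the optimal-stopping machinery is not bypassed; it merely moves earlier in the argument. Second, and more seriously, your claim that the drift of $\wt w$ under $\dbP_{\si,C}$ \emph{equals} $e^{\int_0^\cdot A}\bigl(\a^w+Aw+C\cd\si^T\b^w\bigr)$ is false as stated: the finite-variation part $A^w=A^u-A^v$ need not be absolutely continuous, and the a.e.\ inequality of Step 2 controls only its Lebesgue density $\dot A^w$, not its singular part. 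The submartingale conclusion survives only because the Doob--Meyer decompositions give $dA^u=(\cdots)\,dt+dK^u$ and $dA^v=(\cdots)\,dt-dK^v$ with $K^u,K^v\in\dbI^2$ increasing, so the singular part of $A^w$ is nonnegative; this sign information is exactly the structural input that must be extracted before the ``routine'' Girsanov computation closes, and your write-up does not extract it. With these two repairs the argument goes through, and the final shift to a general $(t,\o)$ via Proposition \ref{prop:rcpd} is fine.
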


A similar result in the case of $\si=I_d$ was proved in \cite{EKTZ}. Their proof is based on the construction of a regular approximation of the BSDE representation of the solution. Also, the comparison result in the fully nonlinear case addressed in \cite{ETZ2} is crucially based on an approximation by finite-dimensional partial differential equations induced by conveniently freezing the path-dependency. With this approximation in hand, the comparison result is proved by building on the corresponding classical results in the PDE literature.

The main contribution of this paper is to provide an alternative proof which does not rely on any representation of the solution, and which does not appeal to the corresponding PDE literature. We also observe that the comparison result of Theorem \ref{thm:comparison} allows for a random and possibly degenerate diffusion coefficient $\si$. Our proof of the comparison result is new, and is even relevant in the Markovian case which reduces to a PDE in a finite-dimensional space. Notice that in the last context, any test function $\phi(t,x)$ which is pointwise tangent from below to a function $f(t,x)$ at point $(t^*,x^*)$ induces a test process $\varphi_t(\omega):=\phi(t,\omega_t)$ which lies in $\ul\cA_Lu_{t^*}(\o^*)$ with $u_t(\o):=f(t,\o_t)$, whenever $\o^*_{t^*}=x^*$. In general, the opposite direction is not true, even for a Markovian test process $\varphi_t(\o)=\varphi(t,\o_t)$ in $\ul\cA_Lu_t(\o)$. This shows that our definition of viscosity solutions involves a larger class of test function than the standard Crandall-Lions notion of viscosity solutions in finite-dimensional spaces. Consequently, the comparison result has more chances under our definition, and we may hope to have an easier proof. We believe that the present proof achieves this goal. This is definitely true in the linear case which is isolated in  Subsection \ref{sect:heat}.

\subsection{Martingale representation and optimal stopping problem}\label{subsec:mgrep_optimalstop}

In this subsection, we state the results of the martingale representation under $\dbP_\si$ and the related optimal stopping problem, which is the key stone for our comparison principle of viscosity solutions. We report the corresponding proofs in Appendix so that the readers may have a clear perspective of the whole paper.

\begin{thm}[Martingale representation]\label{thm:mrt}
$\dbP_\si$ satisfies the martingale representation property. That is, for any $\xi\in \dbL^2(\cF_T, \dbP_\si)$, there exists unique $Z\in \dbH^2$ such that
\beaa
 \xi = \dbE^{\dbP_\si}[\xi] + \int_0^T Z_t \cd dB_t,\q\dbP_\si\mbox{-a.s.}
\eeaa 
\end{thm}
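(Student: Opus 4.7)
The plan is to deduce the martingale representation under $\dbP_\si$ from uniqueness in law for the SDE \eqref{Xweak} (with $\l=0$), via Jacod and Yor's extremality / predictable representation criterion. By construction in Lemma \ref{lem:weaksolution}, $\dbP_\si=\dbP_0\circ(X^\si)^{-1}$; hence under $\dbP_\si$ the canonical process $B$ inherits from $X^\si$ under $\dbP_0$ the property of being a continuous local martingale with $B_0=0$ and $\la B\ra_t=\int_0^t \si_s(B_\cd)\si_s^T(B_\cd)\,ds$. Equivalently, $\dbP_\si$ is a solution of the path-dependent martingale problem with diffusion coefficient $\si\si^T$.

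Next I would invoke Assumption \ref{assum:si}: Lipschitz continuity of $\si$ in $\o$ yields pathwise uniqueness for \eqref{Xweak} with $\l=0$, and combined with the existence of a strong solution (already given in Lemma \ref{lem:weaksolution}), Yamada-Watanabe gives uniqueness in law. Hence $\dbP_\si$ is the \emph{unique} probability on $(\O,\cF_T)$ under which both $B$ and $B_tB_t^T-\int_0^t \si_s\si_s^T\,ds$ are local martingales, so $\dbP_\si$ is an extremal point of this convex set. The Jacod-Yor theorem (see e.g.\ Jacod-Shiryaev, Thm.\ III.4.29) then gives the predictable representation property: every $\dbP_\si$-local martingale $N$ admits a representation $N_t=N_0+\int_0^t Z_s\cd dB_s$ for some predictable $Z$ with $\int_0^t|\si_s^TZ_s|^2\,ds<\infty$, $\dbP_\si$-a.s. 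Applied to the closed $L^2$-martingale $M_t:=\dbE^{\dbP_\si}[\xi|\cF_t]$, this yields the claimed identity; the It\^o isometry $\dbE^{\dbP_\si}[\int_0^T|\si_s^TZ_s|^2\,ds]=\dbE^{\dbP_\si}[\xi^2]-(\dbE^{\dbP_\si}[\xi])^2$ then delivers both $Z\in\dbH^2$ and uniqueness of $Z$ in the $\dbH^2$-sense.

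The main obstacle I anticipate is the degeneracy of $\si$ permitted in the paper. In the uniformly elliptic case one could recover a driving Brownian motion by setting $W:=\int_0^\cd \si_s^{-1}(B)\,dB_s$ under $\dbP_\si$ and then pull back the classical Brownian martingale representation theorem from $\dbP_0$; this direct route is blocked when $\si$ may be singular, because the integrand produced by the Brownian MRT need not be adapted to the filtration generated by the degenerate process $X^\si$. Reformulating through Jacod-Yor extremality circumvents any explicit construction of a driving Brownian motion and relies only on uniqueness in law, which Assumption \ref{assum:si} supplies for free via Lipschitz continuity alone.
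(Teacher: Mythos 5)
Your proposal is correct, but it follows a genuinely different route from the paper. The paper gives a self-contained, hands-on proof: it reduces to Lipschitz $\xi$, treats the constant-$\si$ (Markovian) case first to produce a \emph{bounded} integrand, transfers the Brownian representation through the strong solution map $X^\si$ using Lemma \ref{lem:filtration XW}, and then kills the component of the Brownian integrand orthogonal to the range of $\tilde\si^T$ by an explicit discretization and It\^o-isometry argument, with a singular-value decomposition of $\si$ to secure $\dbF^X$-measurability of the resulting $\zeta$. Your argument instead observes that $\dbP_\si$ is the unique solution of the martingale problem for $B$ with characteristics $\la B\ra_t=\int_0^t(\si\si^T)_s(B_\cd)ds$ (which follows from the paper's own Lemma \ref{lem:weaksolution}, or from Yamada--Watanabe as you say), and invokes Jacod--Yor extremality to obtain the predictable representation property. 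This is shorter, conceptually cleaner, and handles the degeneracy of $\si$ for free, at the cost of importing a nontrivial external theorem; it also loses the quantitative by-product of the paper's Step 1 (boundedness of $Z$ for Lipschitz data), which the paper reuses inside its own Step 3. Two small points you should make explicit: first, the passage from ``unique solution of the martingale problem'' to ``extremal in the set of all measures making $B$ a local martingale'' uses that any component $\dbQ\ll\dbP_\si$ of a convex combination assigns $B$ the same quadratic variation, so it again solves the same martingale problem --- this is built into the statement of Jacod--Shiryaev III.4.29, so citing that version covers it, but extremality in the smaller constrained set alone would not suffice. Second, Jacod--Yor delivers $Z$ predictable for the $\dbP_\si$-augmented filtration, whereas $\dbH^2$ requires $Z\in\dbL^0(\dbF,\dbR^d)$; one must pass to an $\dbF$-progressive modification (harmless, up to a $dt\times\dbP_\si$-null set, and consistent with the seminorm defining $\dbH^2$). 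Neither point is a gap, just bookkeeping worth recording.
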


\begin{cor}
\label{cor:mrt}
Let $\l\in \dbL_L(\dbF)$ and $M\in \dbS^2$. Then $M$ is a $\dbP_{\si,\l}$-martingale if and only if there exists $Z\in \dbH^2$ such that
\beaa
d M_t = Z_t \cd \Big[dB_t -\si_t \l_t dt\Big],\q\dbP_\si\mbox{-a.s.}
\eeaa
\end{cor}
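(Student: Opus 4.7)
The plan is to combine the $\dbP_\si$-martingale representation (Theorem \ref{thm:mrt}) with a Girsanov-type argument, exploiting that $\dbP_{\si,\l}$ and $\dbP_\si$ are equivalent. Two structural facts will be used throughout. First, by the Girsanov construction in the proof of Lemma \ref{lem:weaksolution}, $\dbP_{\si,\l}\sim\dbP_\si$; because $\l$ is bounded, both $d\dbP_{\si,\l}/d\dbP_\si$ and its reciprocal lie in every $\dbL^p(\dbP_\si)$. Second, from the weak formulation of SDE \reff{Xweak}, the process $\wt B_t := B_t-\int_0^t\si_s\l_s\,ds$ is a continuous $\dbP_{\si,\l}$-local martingale with quadratic variation $\int_0^{\cd}\si_s\si_s^T\,ds$.

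The ``if'' direction is almost immediate: assuming $dM_t = Z_t\cd[dB_t-\si_t\l_t\,dt] = Z_t\cd d\wt B_t$ $\dbP_\si$-a.s., the same identity holds $\dbP_{\si,\l}$-a.s.\ by equivalence, so $M$ is a continuous $\dbP_{\si,\l}$-local martingale. The hypothesis $M\in\dbS^2$ gives $\sup_{t\le T}|M_t|\in\dbL^2(\dbP_\si)$, which, combined with Cauchy-Schwarz and the $\dbL^p$-integrability of the density, upgrades $M$ to a uniformly integrable $\dbP_{\si,\l}$-martingale.

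For the converse, let $D_t$ denote the density process of $\dbP_{\si,\l}$ with respect to $\dbP_\si$. Applying Theorem \ref{thm:mrt} to the $\dbP_\si$-martingale $D$ produces a representation $dD_t = H_t\cd dB_t$, and matching the two $\dbP_{\si,\l}$-semimartingale decompositions of $B$ (the Girsanov decomposition $B-\int D^{-1}d\langle B,D\rangle$ and the weak-SDE decomposition $B-\int\si\l\,ds$) yields the pointwise identity $\si_t\si_t^T H_t = D_t\si_t\l_t$. By Bayes' rule, $\wt M_t := M_tD_t$ is a $\dbP_\si$-local martingale, and Theorem \ref{thm:mrt} applied once more provides $d\wt M_t = \wt Z_t\cd dB_t$. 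It\^o's formula applied to $M = \wt M/D$, using the identity above to cancel the finite variation contributions, delivers
\beaa
dM_t = Z_t\cd\bigl[dB_t - \si_t\l_t\,dt\bigr], \qquad Z_t := D_t^{-1}\wt Z_t - M_tD_t^{-2}H_t,
\eeaa
which is the desired representation.

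The main obstacle is the integrability claim $Z\in\dbH^2$, as opposed to merely $Z\in\dbH^2_{\text{loc}}$. The cleanest route is to use the fact that the quadratic variation $[M]_T = \int_0^T|\si_s^T Z_s|^2\,ds$ is intrinsic to the continuous semimartingale $M$ and is therefore measure-free. Applying BDG under $\dbP_{\si,\l}$ bounds $\dbE^{\dbP_{\si,\l}}[[M]_T]$ by the $\dbS^2(\dbP_{\si,\l})$-norm of $M$, which in turn is finite thanks to $M\in\dbS^2$ and the $\dbL^p$-integrability of the density; H\"older against $D_T^{-1}\in\bigcap_p\dbL^p(\dbP_\si)$ transfers the bound back to $\dbP_\si$. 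The boundedness of $\l$ is used in an essential way at this final step.
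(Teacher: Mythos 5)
Your overall strategy --- Bayes' rule, two applications of Theorem \ref{thm:mrt} (to the density process $D$ and to $\wt M=MD$), and an It\^o computation, with the key identity $\si_t\si_t^TH_t=D_t\si_t\l_t$ extracted by matching the two $\dbP_{\si,\l}$-semimartingale decompositions of $B$ --- is sound, and it is essentially the argument the paper leaves implicit: no proof of the corollary is printed, it being treated as a routine consequence of Theorem \ref{thm:mrt} and the Girsanov construction in Lemma \ref{lem:weaksolution}. The ``if'' direction is correct (there Cauchy--Schwarz genuinely suffices, since you only need $\sup_t|M_t|\in\dbL^1(\dbP_{\si,\l})$), and the algebra in the converse checks out.

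The final integrability step, however, is broken as written. The densities $D_T$ and $D_T^{-1}$ lie in every $\dbL^p(\dbP_\si)$ with $p<\infty$ but not in $\dbL^\infty$, so each change of measure costs an exponent: from $\sup_t|M_t|\in\dbL^2(\dbP_\si)$ you can only conclude $\sup_t|M_t|\in\dbL^{2-\e}(\dbP_{\si,\l})$, hence the BDG bound on $\dbE^{\dbP_{\si,\l}}[[M]_T]$ need not be finite; and even granting it, H\"older against $D_T^{-1}$ transfers an $\dbL^1(\dbP_{\si,\l})$ bound on $[M]_T$ back to $\dbP_\si$ only if $[M]_T\in\dbL^{1+\e}(\dbP_{\si,\l})$, which you do not have. (The same slippage occurs earlier: $M_TD_T$ is only in $\dbL^{2-\e}(\dbP_\si)$, so Theorem \ref{thm:mrt} does not apply verbatim to $\wt M$; this is harmless --- localize and obtain $\wt Z\in\dbH^2_{\rm loc}$, which is all the It\^o computation needs.) The clean route to $Z\in\dbH^2$ is to stay under $\dbP_\si$: apply It\^o to $M^2$ using $dM_t=Z_t\cd dB_t-Z_t\cd\si_t\l_t\,dt$, localize, and absorb the cross term via $2|M_tZ_t\cd\si_t\l_t|\le 2L|M_t|\,|\si_t^TZ_t|\le\frac12|\si_t^TZ_t|^2+2L^2|M_t|^2$, which yields $\dbE^{\dbP_\si}\big[\int_0^T|\si_s^TZ_s|^2ds\big]\le 2(1+2L^2T)\,\dbE^{\dbP_\si}\big[\sup_{t\le T}|M_t|^2\big]<\infty$. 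This is where the boundedness of $\l$ is actually used, rather than through the moments of the density.
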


Let $\ch\in\cT^+$ and $X\in\dbL^0(\dbF)$ be a process with continuous sample paths. Consider the optimal stopping problem under dominated nonlinear expectation:
 \bea\label{Voptimal}
 V_t(\omega)
 :=
 \sup_{\tau\in\cT}
 \overline{\cE}^{t,\o}_L\big[X^{t,\omega}_{\tau\wedge (\ch^{t,\o}-t)}\big],
 &\mbox{for all}&
 (t,\omega)\in\Theta.
 \eea

\begin{thm}[Optimal stopping problem]
\label{thm:optimalstop_new}
Let $L>0$ and $X\in\dbL^0(\dbF)$ such that $X_{\cdot\we\ch}\in\dbS^2$. Then, there exists an $\dbF-$adapted and $\dbP_\si$-a.s. continuous process $Y$ satisfying:
\\
{\rm (i)}\q there exists $\t^*\in\cT_\ch$ such that $\t^*=\inf\{t:Y_t=X_t\}$, $\dbP_\si$-a.s. and $Y_0=\ol\cE_L[X_{\t^*}]$; 
\\
{\rm (ii)}\q for all $\tau\in\cT_\ch$, we have $Y_{\tau}=V_{\tau}$, $\dbP_\si-$a.s.; in particular, $Y_0=V_0$;
\\
{\rm (iii)}\q there exist $\dbP^*\in\cP_L$,   $\dbP^*$-martingale $M$ starting from $0$, and $K\in\dbI^2$ such that
 \beaa
 &Y=Y_0+M-K
 ~~\mbox{and}~~
 \int(Y-X)dK=0,~~\dbP_\si-\mbox{a.s.}&
 \eeaa
\end{thm}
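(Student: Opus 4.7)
The plan is to realize $Y$ as the solution of a reflected BSDE under $\dbP_\si$ whose driver $L|\si^T z|$ is the support function of the measure class $\cP_L$, and to read off the three claims from the corresponding Snell-envelope theory. Concretely, I would first seek $(Y,Z,K)\in\dbS^2\times\dbH^2\times\dbI^2$ with $Y$ continuous $\dbP_\si$-a.s.\ solving
\begin{equation*}
Y_t = X_{\ch}+\int_t^{\ch}\! L|\si^T_s Z_s|\,ds - \int_t^{\ch}\! Z_s\cd dB_s + (K_{\ch}-K_t),\q Y_t\ge X_t,\q \int_0^{\ch}(Y_s-X_s)\,dK_s = 0.
\end{equation*}
The driver is globally $L$-Lipschitz in the intrinsic variable $\si^T z$, which is the norm used in $\dbH^2$; together with $X_{\cdot\wedge\ch}\in\dbS^2$ and the continuity of the obstacle, the classical reflected BSDE theory yields a unique such triple. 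The only probabilistic input is the martingale representation property for $\dbP_\si$ granted by Theorem \ref{thm:mrt}.

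Next, I would linearize the driver by setting $\l^*_s:=L\,\si^T_s Z_s/|\si^T_s Z_s|$ on $\{\si^T_s Z_s\neq 0\}$ and $0$ otherwise, which is $\dbF$-progressive with $|\l^*|\le L$, so $\dbP^*:=\dbP_{\si,\l^*}\in\cP_L$ by \reff{cP}. The identity $L|\si^T_s Z_s|=Z_s\cd\si_s\l^*_s$ rewrites the RBSDE as $Y = Y_0 + M - K$ with $M_t:=\int_0^t Z_s\cd(dB_s-\si_s\l^*_s\,ds)$, a $\dbP^*$-martingale by Corollary \ref{cor:mrt}; the estimate \reff{cEest} and equivalence of $\dbP^*$ with $\dbP_\si$ place $M$ in $\dbS^2$. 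This is exactly the decomposition in (iii). For (i), note that for any $\l\in\dbL_L(\dbF)$ one has $\si^T_s Z_s\cd\l_s\le L|\si^T_s Z_s|$ by Cauchy--Schwarz, so the drift of $Y$ under $\dbP_{\si,\l}$ is non-positive and $Y$ is a $\dbP_{\si,\l}$-supermartingale. Defining $\t^*:=\inf\{t\ge 0:Y_t=X_t\}\wedge\ch$, continuity of $Y-X$ and the Skorokhod condition force $K$ to be flat on $[0,\t^*]$, so $Y_{\cd\wedge\t^*}$ is a true $\dbP^*$-martingale. This gives $Y_0=\dbE^{\dbP^*}[X_{\t^*}]\le\ol\cE_L[X_{\t^*}]\le V_0$, while the supermartingale property combined with $Y\ge X$ yields $Y_0\ge \dbE^{\dbP_{\si,\l}}[X_\t]$ for every $(\l,\t)$, hence $Y_0\ge V_0$. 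All inequalities collapse, proving (i).

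For the dynamic identification $Y_\t = V_\t$ in (ii), I would rerun the previous argument conditionally on $\cF_\t$. Using Proposition \ref{prop:rcpd}, for $\dbP_\si$-a.e.\ $\o$ the shifted data $(\si^{\t,\o},X^{\t,\o})$ satisfies the hypotheses of the RBSDE on $[0,\ch^{\t,\o}-\t(\o)]$, and by uniqueness its solution is the natural restriction of $(Y,Z,K)$; the shifted $\t^*$ then supplies an optimal stopping rule started at $\t$. The main technical obstacle is precisely this dynamic-programming step: patching optimal perturbations $\l$ across $\o$ into a single $\dbF$-progressive control so that the conditional supremum defining $V_\t$ is attained pathwise. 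This is the usual essential-supremum and measurable-selection issue for Snell envelopes under sublinear expectations, and in our setting it is tamed by the domination of $\cP_L$ by $\dbP_\si$: all $\dbP_{\si,\l}$-a.s.\ statements refer to a common null set, the estimate \reff{cEest} together with Proposition \ref{prop:DCT} provides the continuity needed to pass identities from countable dense subfamilies to the whole class, and the candidate $\l^*$ constructed pointwise from $\si^T Z$ already serves as the almost-sure maximizer.
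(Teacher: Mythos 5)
Your proposal follows essentially the same route as the paper: solve the reflected BSDE under $\dbP_\si$ with driver $L|\si^T z|$ (whose wellposedness rests on the martingale representation Theorem \ref{thm:mrt}), linearize the driver with $\l^*$ to get $\dbP^*$ and the decomposition in (iii), obtain (i) from the supermartingale property under every $\dbP_{\si,\l}$ together with the martingale property on $[0,\t^*]$, and prove (ii) by shifting the RBSDE via Proposition \ref{prop:rcpd} and uniqueness; the paper's only cosmetic difference is that it identifies $Y_0=\ol\cE_L[X_{\t^*}]$ directly through the BSDE characterization of $\ol\cE_L$ (Proposition \ref{prop:g}) rather than through your sandwich of inequalities, which is equivalent. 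The one point you gloss over is the measurability of $\t^*$: since $Y$ is only $\dbP_\si$-a.s.\ continuous, $\inf\{t:Y_t=X_t\}\we\ch$ is a priori a stopping time only for the augmented filtration $\dbF^*$, whereas the theorem asserts $\t^*\in\cT_\ch$ for the raw filtration; the paper closes this gap with Lemma \ref{lem:hit}, built on the approximation of previsible $\dbF^*$-stopping times by raw ones in Proposition \ref{previsible version}, and your argument should invoke the same device.
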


\begin{defn}[Snell envelop]
\label{defn-Snell}
The process $Y$ introduced in Theorem \ref{thm:optimalstop_new} is called  a Snell envelop of the stopped process $X_{\cdot\we\ch}$, and denote $\mbox{{\rm Snell}}(X_{\cdot\we\ch}):=Y$. The stopping time $\t^*\in\cT_\ch$ is called  an optimal stopping rule.
\end{defn}

\subsection{Pathwise semimartingales}
In this subsection, let $u\in \dbL^0(\dbF)$ such that all the (nonlinear) expectations involved below exist. Similar to standard semimartingale under a fixed probability measure $\dbP$, we say $u$  is an $\ol\cE_L$-submartingale (resp.  supermartingale)  if, for any $t$ and any $\t\in \cT$ such that $\t\ge t$,
\bea
\label{cELsub}
u_t  \le   (\mbox{resp.}~\ge)~  \ol\cE_L[u_\t |\cF_t] := \esup_{\dbP\in \cP_L} \dbE^\dbP[u_\t |\cF_t],\q \dbP_\si\mbox{-a.s.}
\eea
Notice that viscosity solutions are pathwise defined. We extend the above notion in a pathwise manner.
\begin{defn}
\label{defn:semimg}
\no {\rm (i)} We say $u$ is a pathwise $\dbP_{\si}$-submartingale (resp. supermartingale) if 
\beaa
u_t(\o) \le (\mbox{resp.}~\ge)~ \dbE^{\dbP_{\si^{t,\o}}}[u^{t,\o}_\t]\q\mbox{for any $(t,\o)\in\Th$ and  $\t\in \cT_{T-t}$}.
\eeaa

\no {\rm (ii)} We say $u$ is a  pathwise $\ol\cE_L$-submartingale (resp. supermartingale) if 
\beaa
u_t(\o) \le (\mbox{resp.}~\ge) ~\ol\cE^{t,\o}_L[u^{t,\o}_\t]\q\mbox{for any $(t,\o)\in\Th$ and $\t\in \cT_{T-t}$}.
\eeaa
\end{defn}

\begin{rem}
\label{rem-semimg}
{\rm By Proposition \ref{prop:rcpd} and definition of r.c.p.d., it is clear that a  pathwise $\dbP_\si$-submartingale (resp. supermartingale) is a $\dbP_\si$-submartingale (resp. supermartingale).
}
\end{rem}

\begin{prop}
\label{prop:cEsub}
 Assume $u\in \dbS^2$ is a pathwise $\ol\cE_L$-submartingale. Then,
 
\no {\rm (i).}\q $u$ is an $\ol\cE_L$-submartingale;

\no {\rm (ii).}\q there exists $\dbP^*\in\cP_L$ such that $u$ is a $\dbP^*$-submartingale.
\end{prop}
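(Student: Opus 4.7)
My plan is to handle the two parts separately, proving (i) by a direct translation of the pathwise inequality into a conditional inequality, and proving (ii) by a discretization-and-limit construction.

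For (i), fix a deterministic $s\le T$ and a stopping time $\tau\in\cT$ with $\tau\ge s$. For each $\o\in\O$, define the shifted stopping time $\hat\tau(\cdot;\o):=\tau(\o\otimes_s\cdot)-s\in\cT_{T-s}$, so that $(u_\tau)^{s,\o}=u^{s,\o}_{\hat\tau(\cdot;\o)}$. The pathwise $\ol\cE_L$-submartingale assumption then reads
\beaa
 u_s(\o)\ \le\ \ol\cE^{s,\o}_L\big[(u_\tau)^{s,\o}\big]
 \ =\ \sup_{\mu\in\dbL_L^{s,\o}}\dbE^{\dbP_{\si^{s,\o},\mu}}\big[(u_\tau)^{s,\o}\big].
\eeaa
On the other hand, for every $\dbP=\dbP_{\si,\l}\in\cP_L$, Proposition \ref{prop:rcpd} identifies $\dbP^{s,\o}$ with $\dbP_{\si^{s,\o},\l^{s,\o}}\in\cP^{s,\o}_L$ for $\dbP$-a.e.\ $\o$, giving $\dbE^\dbP[u_\tau|\cF_s](\o)=\dbE^{\dbP_{\si^{s,\o},\l^{s,\o}}}[(u_\tau)^{s,\o}]$. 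The matching upper bound is easy ($\esssup_{\dbP}\le\sup_{\mu}$), while the matching lower bound follows from a concatenation/measurable-selection argument: given any $\mu\in\dbL_L^{s,\o}$, extend to a global $\l\in\dbL_L$ with $\l^{s,\o}=\mu$. Combining the two, $\ol\cE_L[u_\tau|\cF_s](\o)=\ol\cE^{s,\o}_L[(u_\tau)^{s,\o}]\ge u_s(\o)$, which is the $\ol\cE_L$-submartingale property.

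For (ii), the task is to produce a single $\dbP^*\in\cP_L$. My plan is the following discretization scheme. Fix partitions $\pi_n=\{0=t^n_0<\cdots<t^n_{k_n}=T\}$ with mesh $|\pi_n|\to 0$ and tolerances $\eps_n\downarrow 0$. At each node $t^n_k$, the pathwise submartingale inequality with $\tau=t^n_{k+1}-t^n_k$ yields, for $\dbP_\si$-a.e.\ $\o$, a drift $\mu^{(n,k,\o)}\in\dbL_L^{t^n_k,\o}$ with
\beaa
\dbE^{\dbP_{\si^{t^n_k,\o},\mu^{(n,k,\o)}}}\big[u^{t^n_k,\o}_{t^n_{k+1}-t^n_k}\big]\ \ge\ u_{t^n_k}(\o)-\eps_n.
\eeaa
By a measurable-selection argument I patch these selections into a global $\l^{(n)}\in\dbL_L$ and set $\dbP^{(n)}:=\dbP_{\si,\l^{(n)}}\in\cP_L$. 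Applying Proposition \ref{prop:rcpd} at each node and telescoping yields, $\dbP^{(n)}$-a.s., $u_{t^n_k}\le\dbE^{\dbP^{(n)}}[u_{t^n_j}|\cF_{t^n_k}]+(j-k)\eps_n$ for all $k\le j$. Next, since $\{\l^{(n)}\}$ lies in the weakly compact closed convex set $\{|\l|\le L\}\subset\dbL^2([0,T]\times\O,dt\otimes\dbP_\si;\dbR^d)$, I pass to a subsequential weak limit $\l^*$, which still satisfies $|\l^*|\le L$, and define $\dbP^*:=\dbP_{\si,\l^*}\in\cP_L$. Finally, I pass to the limit in the discrete submartingale inequality, using $u\in\dbS^2$ and the uniform continuity of $u$ along paths to upgrade from the nodes of $\pi_n$ to arbitrary $s\le t$, obtaining $u_s\le\dbE^{\dbP^*}[u_t|\cF_s]$.

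The main obstacle is the last limiting step, because the map $\l\mapsto M^\l_T=\exp(\int\l\cdot dB-\tfrac12\int|\l|^2dt)$ (the Girsanov density) is nonlinear, so weak convergence of $\l^{(n)}$ does not directly give convergence of $dP^{(n)}/d\dbP_\si$ nor of the conditional expectations. I expect to overcome this either by a Koml\'os-type convexification (exploiting the convexity of $\{|\l|\le L\}$ at the level of controls to produce a.s.\ convergent convex combinations of $\l^{(n)}$, together with continuity of the Girsanov map along such combinations) or, more directly, by exploiting the uniform bound $|\l^{(n)}|\le L$ to secure uniform $L^p(\dbP_\si)$-integrability of $M^{\l^{(n)}}_T$ for every $p<\infty$ and applying standard BSDE-type stability to the conditional expectations $\dbE^{\dbP^{(n)}}[u_t|\cF_s]$.
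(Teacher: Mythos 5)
Your part (i) is in substance the paper's own argument: the identity $\ol\cE_L[u_\tau|\cF_s](\o)=\ol\cE_L^{s,\o}\big[(u_\tau)^{s,\o}\big]$ is exactly what Proposition \ref{prop:g} (ii) supplies, via the BSDE $Y_s=u_\tau+\int_s^\tau L|\si^T_rZ_r|dr-\int_s^\tau Z_r\cd dB_r$. The one place where your sketch is thinner than it looks is the lower bound $\ol\cE_L[u_\tau|\cF_s](\o)\ge\sup_\mu\dbE^{\dbP_{\si^{s,\o},\mu}}[(u_\tau)^{s,\o}]$: for each fixed $\mu$ the concatenation gives the inequality only off a $\mu$-dependent $\dbP_\si$-null set, and the supremum runs over an uncountable family, so you need either a countable reduction or precisely the BSDE characterization. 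The clean route is simply to invoke Proposition \ref{prop:g}.

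Part (ii) is where the genuine gap lies. The paper's proof is constructive and needs no compactness: since $u$ is an $\ol\cE_L$-submartingale, the solution $Y$ of the reflected BSDE with \emph{upper} barrier $u$ and driver $L|\si^TZ|$ satisfies $Y=u$ (one has $Y=\einf_\tau\ol\cE_L[u_\tau|\cF_\cdot]\ge u$ from the submartingale property, while $Y\le u$ by the barrier constraint), whence $du_t=-L|\si_t^TZ_t|dt+Z_t\cd dB_t+dK_t$ with $K$ increasing, and $\l^*$ is read off as the maximizer $(\l^*)^T\si^TZ=L|\si^TZ|$. Your discretize--select--pass-to-the-limit scheme faces two unresolved obstacles. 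First, the ``measurable-selection argument'' patching the $\eps_n$-optimal drifts $\mu^{(n,k,\o)}$ into a progressively measurable $\l^{(n)}$ is a nontrivial selection in a space of processes and is not supplied. Second, and more seriously, the limit passage is not secured by either of your proposed fixes: a weak $\dbL^2$ limit of the controls $\l^{(n)}$ is the wrong object because $\l\mapsto\dbE^{\dbP_{\si,\l}}[\,\cdot\,|\cF_s]$ is not weakly continuous in $\l$, and the Koml\'os convexification at the level of controls does not help because the approximate submartingale inequality is not stable under convex combinations of the $\l^{(n)}$ (the measure $\dbP_{\si,\sum_ka_k\l^{(k)}}$ is not $\sum_ka_k\dbP_{\si,\l^{(k)}}$). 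What could work is compactness at the level of measures: the densities $M^{\l^{(n)}}_T$ are bounded in every $L^p(\dbP_\si)$, so one can extract a weak limit $\hat\dbP$ and pass to the limit in the integrated inequality $\dbE^{\dbP^{(n)}}[(u_{t_n}-u_{s_n})\1_A]\ge-C_n$; but then you must prove that $\cP_L$ is convex and weakly closed so that $\hat\dbP\in\cP_L$, a nontrivial point you do not address. As it stands, part (ii) is a plan with a hole at its decisive step, not a proof.
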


\subsection{A fundamental lemma}

The following result shows how to find a point of tangency in mean. This replaces the local compactness argument in the standard Crandall-Lions theory of viscosity solutions.

\begin{lem}\label{lem:maximumpoint}
Assume $u\in\dbL^0(\dbF)$ satsfying $u_{\cdot\we\ch}\in\dbS^2$ and $u_0 > \ol\cE_L[u_{\ch}]$ for some  $\ch\in\cT^+$. Then there exists $\o^* \in \O$ and $t^*<\ch(\o^*)$ such that $0\in\underline\cA_Lu_{t^*}(\omega^*)$.
\end{lem}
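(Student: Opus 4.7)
The natural strategy is to feed $X:=u$ and the given $\ch$ into the optimal stopping theorem (Theorem \ref{thm:optimalstop_new}). That produces a $\dbP_\si$-a.s.\ continuous Snell envelope $Y$, an optimal rule $\tau^*\in\cT_\ch$ with $Y_0=\ol\cE_L[u_{\tau^*}]$ and $\tau^*=\inf\{t:Y_t=u_t\}$ $\dbP_\si$-a.s., together with the pathwise identification $Y_{\tau}=V_{\tau}$ $\dbP_\si$-a.s.\ for every $\tau\in\cT_\ch$, where $V$ is the pointwise value process \eqref{Voptimal} with $X=u$. The idea is to choose $\o^*$ with $\tau^*(\o^*)<\ch(\o^*)$ and read the property $0\in\ul\cA_L u_{t^*}(\o^*)$ directly off the pathwise equality $u_{t^*}(\o^*)=V_{t^*}(\o^*)$ at $t^*:=\tau^*(\o^*)$.

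\textbf{Using the hypothesis.} Since $\tau\equiv 0$ is admissible in the supremum defining $V_0$, we get $Y_0=V_0\ge \ol\cE_L[u_0]=u_0>\ol\cE_L[u_\ch]$ by assumption. If $\tau^*=\ch$ held $\dbP_\si$-a.s., then $Y_0=\ol\cE_L[u_{\tau^*}]=\ol\cE_L[u_\ch]$, contradicting the previous inequality. Hence $\dbP_\si(\tau^*<\ch)>0$. Intersect this positive-measure event with the $\dbP_\si$-full sets on which (i) $Y$ has continuous paths, so that $Y_{\tau^*}=u_{\tau^*}$ by the very definition of $\tau^*$ together with the continuity of $u_{\cdot\we\ch}\in\dbS^2$, and (ii) $Y_{\tau^*(\o)}(\o)=V_{\tau^*(\o)}(\o)$ from Theorem \ref{thm:optimalstop_new}(ii). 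Pick any $\o^*$ in this intersection.

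\textbf{Concluding the test-function property.} Set $t^*:=\tau^*(\o^*)<\ch(\o^*)$. Because $\ch$ is $\dbF$-adapted, $\{\ch\le t^*\}\in\cF_{t^*}$; hence the value of $\ch$ on the concatenation $\o^*\otimes_{t^*}\o'$ depends only on the path up to $t^*$, giving $\ch(\o^*\otimes_{t^*}\o')>t^*$ for every $\o'\in\O$, and therefore $\ch':=\ch^{t^*,\o^*}-t^*\in\cT^+_{T-t^*}$. By the selection of $\o^*$,
$$
u_{t^*}(\o^*)\;=\;Y_{t^*}(\o^*)\;=\;V_{t^*}(\o^*)\;=\;\sup_{\tau\in\cT_{\ch'}}\ol\cE_L^{t^*,\o^*}\big[u^{t^*,\o^*}_{\tau}\big],
$$
so $\ol\cE_L^{t^*,\o^*}[u^{t^*,\o^*}_{\tau}]\le u_{t^*}(\o^*)$ for every $\tau\in\cT_{\ch'}$, with equality attained at $\tau=0$. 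Taking $\f\equiv 0$ in \eqref{cA} and using $\ul\cE_L^{t^*,\o^*}[-u^{t^*,\o^*}_\tau]=-\ol\cE_L^{t^*,\o^*}[u^{t^*,\o^*}_\tau]$, this is exactly the statement $0\in\ul\cA_L u_{t^*}(\o^*)$ with localizing time $\ch'$.

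\textbf{Expected difficulty.} With Theorem \ref{thm:optimalstop_new} in hand, the main mathematical content is already provided, and what remains is essentially bookkeeping; the only delicate point is that the various $\dbP_\si$-a.s.\ statements—continuity of $Y$, $Y_{\tau^*}=u_{\tau^*}$, and $Y_{\tau^*}=V_{\tau^*}$—must be realized at a common full-measure subset so that the pathwise identity at $(t^*,\o^*)$ is legitimate. This is routine once one appeals to $\dbP_\si$-continuity of the relevant processes and the equivalence of $\dbP_\si$ with every $\dbP\in\cP_L$ afforded by the Girsanov construction in Lemma \ref{lem:weaksolution}.
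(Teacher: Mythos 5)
Your proposal is correct and follows essentially the same route as the paper: apply Theorem \ref{thm:optimalstop_new} with $X:=u$, use $\ol\cE_L[u_{\tau^*}]=V_0\ge u_0>\ol\cE_L[u_\ch]$ to rule out $\tau^*=\ch$ $\dbP_\si$-a.s., select $\o^*$ in the intersection of the positive-probability event $\{\tau^*<\ch\}$ with the full-measure sets where $Y_{\tau^*}=u_{\tau^*}=V_{\tau^*}$, and read off $0\in\ul\cA_Lu_{t^*}(\o^*)$ from the definition of $V$. Your additional bookkeeping (verifying $\ch^{t^*,\o^*}-t^*\in\cT^+_{T-t^*}$ and the sign flip between $\ol\cE_L$ and $\ul\cE_L$) is a correct elaboration of what the paper leaves implicit.
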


\proof Define the optimal stopping problem $V$ by \reff{Voptimal} with $X := u$. Let $\t^*\in \cT_\ch$ be the optimal stopping rule. Since by Theorem \ref{thm:optimalstop_new} (i) and (ii) we have
\beaa
\ol \cE_L[u_{\t^*}] = V_0 \ge u_0 > \ol \cE_L [ u_\ch] 
&\mbox{and}& 
\dbP_\si\big[u_{\t^*} = V_{\t^*}\big] = 1,  
\eeaa
and it follows that $\dbP_\si\big[u_{\t^*} = V_{\t^*}, \t^* <\ch\big] > 0$, then there exists $\o^* \in \O$ such that $t^* := \t^*(\o^*) < \ch(\o^*)$ and $u_{t^*}(\o^*) = V_{t^*}(\o^*)$. By the definition of $V$ and $\ul\cA_L u$, this means that $(t^*, \o^*)$ is the desired point.
\qed

As a direct application of the lemma above, we obtain the comparison result for the heat equation in the next subsection.

\subsection{Comparison result for the heat equation}
\label{sect:heat}

In this subsection, we consider equations with nonlinearity $F=0$, i.e.
\bea\label{heat}
-\cL u(t,\o)=0
&t<T,&
\o\in\O.
\eea
Our objective is to provide an easy proof of the comparison result of Theorem \ref{thm:comparison} which requires standard tools from stochastic analysis. For simplicity, we specialize the comparison Theorem \ref{thm:comparison} to the case $L=0$, and call the corresponding viscosity solution as $\dbP_\si$-viscosity solution. We emphasize that the set of test processes is the largest possible with $L=0$.

\begin{thm}\label{thm:submart}
For a process $u\in C^0_2(\Th)$, the following are equivalent:
\\
{\rm (i)}\q $u$ is a pathwise $\dbP_\si$-submartingale (resp. supermartingale);
\\
{\rm (ii)}\q $u$ is $\dbP_\si$-viscosity subsolution (resp. supersolution) of the path-dependent heat equation \eqref{heat}.
\end{thm}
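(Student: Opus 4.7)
The plan is to prove the equivalence for the submartingale/subsolution case and then obtain the supermartingale/supersolution version by applying the first case to $-u$: the heat equation \reff{heat} is linear in $u$, and since $\ol\cE_0=\ul\cE_0=\dbE^{\dbP_\si}$ when $L=0$, the map $\f\mapsto-\f$ is a bijection between $\ol\cA_0 u_t(\o)$ and $\ul\cA_0(-u)_t(\o)$, which swaps the sub- and supersolution inequalities. In both directions I reduce pointwise computations at an arbitrary $(t_0,\o_0)$ to the origin via the shift $u\mapsto u^{t_0,\o_0}$, $\si\mapsto\si^{t_0,\o_0}$; the pathwise submartingale property is preserved by this shift (by definition), and so is the viscosity property (which is pointwise).

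For (i)$\Rightarrow$(ii), let $\f\in\ul\cA_0 u_0$ with localizing time $\ch\in\cT^+$. The definition of $\ul\cA_0$, together with $\ul\cE_0=\dbE^{\dbP_\si}$, gives $\varphi_0-u_0\le\dbE^{\dbP_\si}[\varphi_\t-u_\t]$ for every $\t\in\cT_\ch$. The pathwise submartingale property supplies $\dbE^{\dbP_\si}[u_\t]\ge u_0$, hence $\dbE^{\dbP_\si}[\varphi_\t]\ge\varphi_0$, which by the functional It\^{o} formula \reff{Ito} rewrites (after shrinking $\ch$ by a bounded-path stopping so that $\pa_\o\f$ is bounded and the stochastic integral is a true $\dbP_\si$-martingale) as
$$\dbE^{\dbP_\si}\Big[\int_0^\t\cL\f_r\,dr\Big]\;\ge\;0\qquad\mbox{for all }\t\in\cT_\ch.$$
Were $\cL\f_0(0)<0$, continuity of $\cL\f$ would let me shrink $\ch$ further so that $\cL\f_r\le-\e$ on $[0,\ch]$; taking $\t=\ch$ would then make the left-hand side at most $-\e\,\dbE^{\dbP_\si}[\ch]<0$, a contradiction. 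Hence $-\cL\f_0(0)\le 0$, the subsolution inequality for \reff{heat}.

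For (ii)$\Rightarrow$(i), argue by contradiction: assume $\delta:=u_0-\dbE^{\dbP_\si}[u_{\t_0}]>0$ for some $\t_0\in\cT$ (after shifting to $(0,0)$). Choose $\e>0$ with $\e\,\dbE^{\dbP_\si}[\t_0]<\delta$ and introduce the perturbed process $w_s(\o):=u_s(\o)+\e s$, for which $w_{\cdot\we\t_0}\in\dbS^2$ and
$$w_0\;=\;u_0\;>\;\dbE^{\dbP_\si}[u_{\t_0}]+\e\,\dbE^{\dbP_\si}[\t_0]\;=\;\dbE^{\dbP_\si}[w_{\t_0}]\;=\;\ol\cE_0[w_{\t_0}].$$
Lemma \ref{lem:maximumpoint} applied to $w$ (with $L=0$, $\ch=\t_0$) then yields $\o^*\in\O$ and $t^*<\t_0(\o^*)$ with $0\in\ul\cA_0 w_{t^*}(\o^*)$; unravelling this condition through the substitution $w=u+\e\,(\cdot)$ at the shifted origin $(t^*,\o^*)$ is exactly the statement that the linear process $\psi_s(\o'):=-\e s$ lies in $\ul\cA_0 u_{t^*}(\o^*)$. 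Since $\cL\psi\equiv-\e$, the viscosity subsolution property at $(t^*,\o^*)$ forces $\e=-\cL\psi_{t^*}(\o^*)\le 0$, the desired contradiction.

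The main obstacle I anticipate is precisely this perturbation step. Applying Lemma \ref{lem:maximumpoint} directly to $u$ itself would only produce $0\in\ul\cA_0 u_{t^*}(\o^*)$, and testing the heat-equation subsolution inequality against the trivial process $\f\equiv 0$ yields the vacuous $0\le 0$. The small linear drift $\e s$ added in $w$ is the device that converts this trivial test process into the non-constant linear test $\psi=-\e\,(\cdot)$, whose generator $\cL\psi=-\e\neq 0$ is what actually delivers the contradiction; everything else amounts to unwrapping the definition of $\ul\cA_0$ and the pathwise submartingale property.
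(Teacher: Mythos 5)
Your proposal is correct and follows essentially the same route as the paper: the direction (i)$\Rightarrow$(ii) uses the tangency inequality plus the functional It\^o formula and a stopping time on which $\cL\f$ stays below $-\e$, and the direction (ii)$\Rightarrow$(i) uses exactly the paper's $\e t$-perturbation combined with Lemma \ref{lem:maximumpoint}, the resulting test process $-\e\,(\cdot)$ yielding the contradiction $\e\le 0$. The only cosmetic difference is that the paper first records that $u+\e\,(\cdot)$ is a viscosity subsolution of the perturbed equation $-\cL u^\e+\e\le 0$ before invoking the lemma, whereas you unwrap $0\in\ul\cA_0 w_{t^*}(\o^*)$ directly into $\psi\in\ul\cA_0 u_{t^*}(\o^*)$; these are the same computation.
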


\proof  (i) $\Longrightarrow$ (ii): Assume to the contrary that, for some $(t,\omega)\in[0,T)\times\O$ and $\varphi\in\underline{\cA}_0 u_t(\omega)$ with localizing time $\ch\in\cT^+$,  $-c:=\cL\f_t(\o) <0$. Without loss of generality, we assume that $(t,\o)=(0,0)$. Note that 
 \beaa
 (\varphi-u)_0 \le \dbE^{\dbP_\si}\big[(\varphi-u)_{\tau}\big]
 &\mbox{for all}&
 \tau\in\cT_\ch.
 \eeaa
Denote $\t:= \inf\{t: \cL\f_t \ge -{c\over 2}\}\we\ch\in \cT^+$. Then, by (ii), we obtain the following desired contradiction:
 \beaa
 0
 \;\ge\;
 u_0 - \dbE^{\dbP_\si}\big[u_\t\big]
 &\ge&
 \varphi_0 - \dbE^{\dbP_\si}\big[\varphi_\tau\big]
 \;=\;
 \dbE^{\dbP_\si}\Big[-\int_0^\t
                  \cL\f_sds 
           \Big]
  \;\ge\; {c\over 2} \dbE^{\dbP_\si}[\t]>0.
 \eeaa

(ii) $\Longrightarrow$ (i): First, denote $u^{\e}_t(\o):=u_t(\o)+\e t$. It is easy to verify that $u^{\e}$ is a $\dbP_\si$-viscosity subsolution to the following equation:
 \beaa
 -\cL u^{\e}_t(\o)+\e
 &\le&
 0.
 \eeaa
We now show that $u^{\e}$ is a pathwsie $\dbP_\si$-submartingale. Suppose to the contrary that there exists a point $(t,\o)$ at which the supermartingale property fails, and set $(t,\o)=(0,0)$ without loss of generality. Then, there exists a stopping time $\ch\in\cT^+_T$ such that $u^{\e}_0>\dbE^{\dbP_\si}[u^{\e}_\ch]$. By Lemma \ref{lem:maximumpoint}, there exists $(t^*,\o^*)$ such that  $0\in\ul{\cA}_0 u^{\e}_{t^*}(\o^*)$, and it follows from the $\dbP_\si$-viscosity subsolution property of  $u^{\e}$ that $\e\le 0,$ which is the required contradiction.

Hence, $u^{\e}$ is a pathwise $\dbP_\si$-submartingale, namely $u_t(\o) +\e t \le \dbE^{\dbP^{\si^{t,\o}}}[u^{t,\o}_\t + \e (\t+t)]$ for all $\t\in \cT_{T-t}$. Send $\e\to 0$, we obtain immediately that $u$ is a  a pathwise $\dbP_\si$-submartingale.
\qed

Theorem \ref{thm:submart} leads immediately to the comparison result. 

\begin{thm}\label{thm:heat}
 Let $u,v\in C^0_2(\Th)$ be $\dbP_\si$-viscosity subsolution and $\dbP_\si$-viscosity supersolution, respectively,  of path dependent heat equation (\ref{heat}). If  $u_T\le v_T$ on $\O$, then $u\le v$ on $\Theta$.
\end{thm}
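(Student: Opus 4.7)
The plan is to derive Theorem \ref{thm:heat} as an almost immediate consequence of Theorem \ref{thm:submart}, which has already reduced the viscosity sub/supersolution property to a pathwise $\dbP_\si$-sub/supermartingale property. The only real work is to chain these properties between time $t$ and the terminal time $T$, and invoke the terminal condition.

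Concretely, I would fix an arbitrary $(t,\o)\in\Theta$ and apply Theorem \ref{thm:submart} to each of $u$ and $v$: the subsolution $u$ is a pathwise $\dbP_\si$-submartingale, so choosing the deterministic stopping time $\t\equiv T-t\in\cT_{T-t}$ gives
\[
 u_t(\o)\;\le\;\dbE^{\dbP_{\si^{t,\o}}}\!\bigl[u^{t,\o}_{T-t}\bigr],
\]
while the supersolution $v$ is a pathwise $\dbP_\si$-supermartingale, so
\[
 v_t(\o)\;\ge\;\dbE^{\dbP_{\si^{t,\o}}}\!\bigl[v^{t,\o}_{T-t}\bigr].
\]
Both expectations are finite by the assumption $u,v\in C^0_2(\Th)$ (so that $u^{t,\o},v^{t,\o}\in \dbS^{t,\o}_2$).

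Next I would unwind the shift: by the definition of the concatenation $\o\otimes_t\o'$, for every $\o'\in\O$ we have
\[
 u^{t,\o}_{T-t}(\o')=u_T(\o\otimes_t\o'),\qquad v^{t,\o}_{T-t}(\o')=v_T(\o\otimes_t\o').
\]
The terminal hypothesis $u_T\le v_T$ holds pointwise on $\O$, hence $u^{t,\o}_{T-t}\le v^{t,\o}_{T-t}$ pointwise on $\O$, so in particular $\dbP_{\si^{t,\o}}$-a.s.

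Combining the three inequalities gives
\[
 u_t(\o)\;\le\;\dbE^{\dbP_{\si^{t,\o}}}\!\bigl[u^{t,\o}_{T-t}\bigr]\;\le\;\dbE^{\dbP_{\si^{t,\o}}}\!\bigl[v^{t,\o}_{T-t}\bigr]\;\le\;v_t(\o).
\]
Since $(t,\o)\in\Theta$ was arbitrary, this yields $u\le v$ on $\Theta$. There is no genuine obstacle at this stage: all the difficulty has been absorbed into Theorem \ref{thm:submart} (and, upstream, into the optimal-stopping/tangency Lemma \ref{lem:maximumpoint}). The proof of this corollary is thus essentially a one-liner, and it also makes transparent that in the linear/heat case no penalization, doubling-of-variables, or Crandall--Ishii-type argument is required.
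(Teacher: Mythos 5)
Your proof is correct and is exactly the argument the paper intends: the paper states that Theorem \ref{thm:submart} ``leads immediately'' to Theorem \ref{thm:heat}, and your chaining of the pathwise sub/supermartingale inequalities at $\t=T-t$ with the pointwise terminal comparison is precisely that immediate deduction.
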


\begin{rem}
{\rm By Theorem \ref{thm:submart} we see that our notion of $\dbP_\si-$viscosity solution reduces to the notion of stochastic viscosity solution introduced by Bayraktar and Sirbu \cite{BayraktarSirbu1,BayraktarSirbu2} in the Markovian case.
}
\end{rem}

\begin{rem}
{\rm (i) Theorem \ref{thm:submart} also provides the unique solution of the heat equation. Indeed it implies that a pathwise $\dbP_\si$-martingale is a viscosity solution. Since the final value is fixed by the boundary condition $\xi$, we are naturally lead to the candidate solution $u(t,\o):=\dbE^{\dbP_{\si^{t,\o}}}\big[\xi^{t,\o}\big]$, $(t,\o)\in\Theta$. Therefore, if this process is in $C^0_2(\Th)$, it is the unique viscosity solution of the heat equation.

(ii) For the heat equation, we can in fact prove the comparison principle without requiring the continuity (in $\o$) of the viscosity semi-solutions.}
\end{rem}

\subsection{Partial comparison}

We next return to the general semilinear PPDE \reff{PPDE}. The following partial comparison result, as in \cite{EKTZ} and \cite{ETZ1}, is a crucial step for our proof of the comparison result.

\begin{prop}\label{prop:partialcomparison}
In the setting of Theorem \ref{thm:comparison}, if in addition $v\in C^{1,2}(\Theta)$, then  $u\le v$ on $\Theta$.
\end{prop}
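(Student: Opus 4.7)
The plan is to argue by contradiction. Suppose $(u-v)_{t_0}(\o_0)>0$ for some $(t_0,\o_0)\in\Theta$; by passing to the shifted processes $u^{t_0,\o_0}$ and $v^{t_0,\o_0}$ one reduces to $(t_0,\o_0)=(0,0)$. Set $w:=u-v$, so $w_0(0)>0$ and $w_T\le 0$, whence $\ol\cE_L[w_T]\le 0<w_0(0)$. To make the final step sharp I first perform the multiplicative rescaling $(\tilde u,\tilde v):=(e^{\lambda t}u,e^{\lambda t}v)$ with $\lambda>L_0$, which converts \reff{PPDE} into one with generator $\tilde F(t,\o,y,z)=-\lambda y+e^{\lambda t}F(t,\o,e^{-\lambda t}y,e^{-\lambda t}z)$ satisfying $\pa_y\tilde F\le L_0-\lambda<0$, i.e., strictly decreasing in $y$. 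The transfer of the $\cP_L$-viscosity subsolution/supersolution properties under this smooth rescaling is handled at the level of jets via Proposition \ref{prop:equiv}. After relabelling I may thus assume $F$ is strictly decreasing in $y$, with $w_0(0)>0\ge\ol\cE_L[w_T]$ still holding.

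Next I apply Lemma \ref{lem:maximumpoint} to $w$ with $\ch=T$: since $w_{\cdot\wedge T}\in\dbS^2$ (using $u,v\in C^0_2(\Theta)$), the lemma provides $(t^*,\o^*)\in[0,T)\times\O$ with $0\in\ul\cA_L w_{t^*}(\o^*)$. A crucial refinement is to choose $\o^*$ so that in addition $w_{t^*}(\o^*)>0$ strictly. For this I invoke the Snell envelope decomposition from Theorem \ref{thm:optimalstop_new}(iii) applied to $w$: writing $Y=Y_0+M-K$ under some $\dbP^*\in\cP_L$ with $M$ a $\dbP^*$-martingale and $K$ increasing only on $\{Y=w\}$, together with $\t^*=\inf\{t:Y_t=w_t\}$, one has $K_{\t^*}=0$ (since $Y>w$ on $[0,\t^*)$), so that $\dbE^{\dbP^*}[Y_{\t^*}]=Y_0\ge w_0(0)>0$. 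As $Y_{\t^*}=w_T\le 0$ on $\{\t^*=T\}$, the event $\{Y_{\t^*}>0\}\subset\{\t^*<T\}$ carries positive probability, and any $\o^*$ drawn from it meets both requirements.

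At this $(t^*,\o^*)$, since $v\in C^{1,2}(\Theta)$ the pair $(\cL v_{t^*}(\o^*),\pa_\o v_{t^*}(\o^*))$ lies in $\ul\cJ_L v_{t^*}(\o^*)$; combined with $(0,0)\in\ul\cJ_L w_{t^*}(\o^*)$, Proposition \ref{new sum derv}(ii) delivers $(\cL v_{t^*}(\o^*),\pa_\o v_{t^*}(\o^*))\in\ul\cJ_L u_{t^*}(\o^*)$. The viscosity subsolution inequality in jet form (Proposition \ref{prop:equiv}(ii)) then reads $-\cL v(t^*,\o^*)-F(t^*,\o^*,u_{t^*}(\o^*),\si^T_{t^*}(\o^*)\pa_\o v(t^*,\o^*))\le 0$, while the classical supersolution inequality for $v$ gives $-\cL v(t^*,\o^*)-F(t^*,\o^*,v_{t^*}(\o^*),\si^T_{t^*}(\o^*)\pa_\o v(t^*,\o^*))\ge 0$. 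Subtracting produces $F(\cdot,u_{t^*}(\o^*),\cdot)\ge F(\cdot,v_{t^*}(\o^*),\cdot)$ with identical $z$-argument; combined with $u_{t^*}(\o^*)>v_{t^*}(\o^*)$ and the strict decrease of $F$ in $y$ secured by the rescaling, this is the desired contradiction.

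The two technical points I expect to cost most work are (a) the refined extraction of a point of strict positivity of $w$ from the Snell envelope, which leverages the martingale-plus-increasing decomposition of Theorem \ref{thm:optimalstop_new}(iii) and would fail for a mere touching point (Lipschitzness of $F$ in $y$ alone is too weak to close the argument); and (b) justifying that the multiplicative rescaling preserves the $\cP_L$-viscosity subsolution property—nontrivial because the factor $e^{\lambda s}$ interacts with the mean-value tangency defining $\ul\cA_L$—which at the level of linear jets reduces to an application of Proposition \ref{prop:equiv}.
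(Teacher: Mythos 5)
Your proof is correct, and it diverges from the paper's in the middle step in an interesting way. The paper also begins with the exponential rescaling and also reduces the final contradiction to the monotonicity of $F$ in $y$ at a touching point where $u>v$; but it manufactures that touching point by applying Lemma \ref{lem:maximumpoint} as a black box to the auxiliary process $X_s:=(u-v)^+_s+c_0s$ with $c_0:=c/(2T)$. The added drift $c_0s$ does double duty: since $(u-v)^+_T\equiv 0$, the tangency $0\in\ul\cA_LX_{t^*}(\o^*)$ immediately yields $(u-v)^+_{t^*}(\o^*)\ge c_0(T-t^*)>0$ (this is exactly the strict-positivity extraction you had to obtain by re-opening the Snell envelope decomposition and arguing that $\dbE^{\dbP^*}[Y_{\t^*}]=Y_0>0$ forces $\dbP[\,\t^*<T,\ w_{\t^*}>0\,]>0$), and it also injects the quantitative slack $c_0>0$ into the final inequality, so that the weak monotonicity of $F$ in $y$ suffices and no strictness (hence only $\lambda\ge L_0$ rather than $\lambda>L_0$) is needed. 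Your route is valid: the positivity extraction via Theorem \ref{thm:optimalstop_new}(i)--(iii) is sound (note $K_{\t^*}=0$ and optional sampling give $\dbE^{\dbP^*}[Y_{\t^*}]=Y_0\ge w_0>0$, while $Y_{\t^*}=w_T\le 0$ on $\{\t^*=T\}$), and the strict decrease of the rescaled generator, $\tilde F(y)-\tilde F(y')\le(L_0-\lambda)(y-y')<0$ for $y>y'$, closes the contradiction without any buffer. Two cosmetic points: $(\cL v,\pa_\o v)(t^*,\o^*)$ lies in $cl(\ul\cJ_Lv_{t^*}(\o^*))$ rather than in the jet itself, but Propositions \ref{new sum derv} and \ref{prop:equiv}(iii) are stated for closures so nothing is lost; and the transfer of the viscosity property under the rescaling is asserted equally tersely by the paper, so you are not held to a higher standard there. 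Overall the paper's $c_0s$-perturbation is the slicker device (it avoids re-entering the optimal stopping construction), while your argument shows the perturbation is not essential if one is willing to use the Snell envelope decomposition directly and pay for it with a strictly monotone rescaling.
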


\proof
We report the proof from \cite{EKTZ} for completeness. First, by possibly transforming the problem to the comparison of $\tilde u_t:=e^{\lambda t}u_t$ and $\tilde v_t:=e^{\lambda t}v_t$, it follows from the Lipschitz property of the nonlinearity $F$ in $y$ that we may assume without generality that $F$ is decreasing in $y$.

Suppose to the contrary that $c:=(u-v)_t(\o)>0$ at some point $(t,\o)\in[0,T)\times\O$. Without loss of generality assume $(t,\o) = (0,0)$.  Let $c_0:=\frac{c}{2T}$, and define $X_s:=(u-v)^+_{s}+c_0s$, $s\in[0,T]$. Clearly $X\in  C^0_{2}(\Theta)$. Since $(u-v)_T\le 0$, it follows that $X_0>\ol\cE_L[X_T]$. By Lemma \ref{lem:maximumpoint}, we may find a point $(t^*,\o^*)$ such that $t<T$ and $0\in\ul\cA_LX_{t^*}(\o^*)$. In particular, this implies that
 \beaa
 -(u-v)^+_{t^*}(\o^*)-c_0t^*
 &\le&
 \ul\cE_L\big[-\big\{(u-v)^+\big\}^{t^*,\o^*}_{T-t^*}-c_0T\big]
 \;=\;
 -c_0T,
 \eeaa
and thus  $(u-v)^+_{t^*}(\o^*)\ge c_0(T-t^*)>0$. Therefore, $(u-v)^+_{t^*}(\o^*)= (u-v)_{t^*}(\o^*) > 0$. Then, since $(u-v)^+\ge u-v$, we deduce from $0\in\ul\cA_LX_{t^*}(\o^*)$ that
 \beaa
 (\varphi-u)_{t^*}(\o^*)
 \le
 \ul\cE_L\big[(\varphi-u)^{t^*,\o^*}_{\tau}\big]
 &\mbox{for all}~\tau\in\cT_{T-t^*},~\mbox{where}&
 \varphi_s(\o):= v_s(\o)-c_0s.
 \eeaa
Since $v\in C^{1,2}(\Theta)$, this means that $\f\in\ul\cA_Lu_{t^*}(\o^*)$.  Note that $\cL\f = \cL v - c_0$ and $\pa_\o \f = \pa_\o v$. Then, since $u$ is a  viscosity subsolution  and $v$ is a  classical supersolution, we deduce that
 \beaa
 0
 &\ge&
 \{-\cL \varphi-F(.,u, \si^T\partial_\o \varphi)\}(t^*,\o^*)
 \\
 &=&
 c_0+\{-\cL v-F(.,u, \si^T\partial_\o v)\}(t^*,\o^*)
 \\
 &\ge&
 c_0+\{F(.,v, \si^T\partial_\o v)
           -F(.,u, \si^T\partial_\o v)\}(t^*,\o^*)
 \;\ge\;
 c_0,
 \eeaa
where the last inequality follows from the non-increase of  $F$ in $y$ and the fact that $u_{t^*}(\o^*)\ge v_{t^*}(\o^*)$. Since $c_0>0$, this is the required contradiction.
\qed

\subsection{Punctual differentiability of viscosity semi-solutions}

We first extend part of Theorem \ref{thm:submart} to this case.

\begin{lem}\label{v a.s. PC2}
Let  Assumption \ref{assum:F} hold, and for some $L\ge L_0$, $u \in C^0_2(\Th)$ be an $L$-subsolution of PPDE \reff{PPDE}. Then, the process $\hat u:= u+\int_0^. (L_0 |u_s| + F^0_s+1) ds$ is a  pathwise $\ol\cE_L$-submartingale.
\end{lem}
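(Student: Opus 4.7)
I argue by contradiction. Suppose the conclusion fails, so there exist $(t_0,\o_0)\in\Th$ and $\t_0\in\cT_{T-t_0}$ with $\hat u_{t_0}(\o_0)>\ol\cE^{t_0,\o_0}_L[\hat u^{t_0,\o_0}_{\t_0}]$. After a shift of origin I may take $(t_0,\o_0)=(0,0)$. Since $u\in\dbS^2$ but $F^0$ is only continuous, I first localize by $\s_n:=\inf\{t:\sup_{s\le t}F^0_s\ge n\}\we T$: for $n$ large the modified stopping time $\ch:=\t_0\we\s_n\in\cT^+$ preserves the strict inequality $\hat u_0>\ol\cE_L[\hat u_\ch]$ and guarantees $\hat u_{\cdot\we\ch}\in\dbS^2$, so that Lemma \ref{lem:maximumpoint} is applicable.

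\textbf{Core step.} Apply Lemma \ref{lem:maximumpoint} to $\hat u$ to obtain $(t^*,\o^*)$ with $t^*<\ch(\o^*)$ and $0\in\ul\cA_L\hat u_{t^*}(\o^*)$, for some localizing $\ch'\in\cT^+_{T-t^*}$. Writing $g_s:=L_0|u_s|+F^0_s+1$ and $A_t:=\int_0^t g_s\,ds$, the path-decomposition gives $A^{t^*,\o^*}_\t=A_{t^*}(\o^*)+\int_0^\t g^{t^*,\o^*}_r\,dr$, so subtracting the (deterministic) constant $A_{t^*}(\o^*)$ from both sides of the tangency inequality yields
$$u_{t^*}(\o^*)\ \ge\ \ol\cE^{t^*,\o^*}_L\Big[u^{t^*,\o^*}_\t+\int_0^\t g^{t^*,\o^*}_r\,dr\Big]\quad\text{for all }\t\in\cT_{\ch'}.$$
Because $u,F^0\in C^0(\Th)$, the integrand $g$ is continuous on $\Th$; for any $\e\in(0,1)$ I pick $\d_\e>0$ with $g\ge g_{t^*}(\o^*)-\e$ on the $d$-ball of radius $\d_\e$ around $(t^*,\o^*)$, and set $\ch_\e:=\inf\{r:r+\|B\|_r\ge\d_\e\}\in\cT^+_{T-t^*}$. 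Restricting $\t$ to $\cT_{\ch'\we\ch_\e}$ and using monotonicity of $\ol\cE^{t^*,\o^*}_L$, this upgrades to
$$u_{t^*}(\o^*)\ \ge\ \ol\cE^{t^*,\o^*}_L\big[u^{t^*,\o^*}_\t-\a_\e\,\t\big],\qquad\a_\e:=\e-g_{t^*}(\o^*),$$
which by Definition \ref{Def jet} together with the inequality characterisation of $\ul\cA_L$ says precisely $(\a_\e,0)\in\ul\cJ_L u_{t^*}(\o^*)$.

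\textbf{Closing the contradiction.} The semijet form of the viscosity subsolution property (Proposition \ref{prop:equiv}) forces $-\a_\e-F(t^*,\o^*,u_{t^*}(\o^*),0)\le 0$, i.e.\ $g_{t^*}(\o^*)-\e\le F(t^*,\o^*,u_{t^*}(\o^*),0)$. On the other hand, the Lipschitz and growth estimates from Assumption \ref{assum:F} give
$$F\big(t^*,\o^*,u_{t^*}(\o^*),0\big)\ \le\ L_0|u_{t^*}(\o^*)|+F^0_{t^*}(\o^*)\ =\ g_{t^*}(\o^*)-1.$$
Combining, $\e\ge 1$, contradicting $\e<1$.

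\textbf{Main obstacle.} The main difficulty is that the natural test process $-A$ witnessing the failure of the submartingale property is not in $C^{1,2}(\Th)$, since $A$ is only absolutely continuous in $t$ and merely continuous in $\o$ (because $g$ inherits its regularity from $u$ and $F^0$). The key maneuver is to freeze $g$ at its value at the tangency point, controlling the error by continuity and a pathwise stopping time; this reduces the non-smooth test process to the linear-in-time process $Q^{\a_\e,0}\in C^{1,2}(\Th)$, whose smoothness lets us land in $\ul\cJ_L u_{t^*}(\o^*)$. The $+1$ term in $g$ is then exactly the slack needed to force the strict contradiction against the viscosity inequality.
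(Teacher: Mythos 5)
Your proof is correct, and its skeleton is the paper's: argue by contradiction, invoke Lemma \ref{lem:maximumpoint} to produce a tangency point $(t^*,\o^*)$ with $0\in\ul\cA_L\hat u_{t^*}(\o^*)$, and then play the viscosity subsolution inequality against the Lipschitz bound $F(\cdot,u,0)\le L_0|u|+F^0$ to reach the contradiction ($0\ge 1$ in the paper, $\e\ge1$ in your version). The only genuine divergence is how the tangency at $(t^*,\o^*)$ is converted into a viscosity inequality. You freeze the drift $g=L_0|u|+F^0+1$ at its value $g_{t^*}(\o^*)$, control the error by continuity and the localizing time $\ch_\e$, and land the linear process $Q^{\a_\e,0}$ in $\ul\cJ_Lu_{t^*}(\o^*)$, then apply Proposition \ref{prop:equiv}. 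The paper instead uses $\f:=-\int_0^\cdot g_s\,ds$ directly as an element of $\ul\cA_Lu_{t^*}(\o^*)$. The ``main obstacle'' you describe is not actually an obstacle under Definition \ref{def:C12}: $C^{1,2}(\Th)$ there only requires an It\^o decomposition with \emph{continuous} $\L$ and $Z$, and $-\int_0^\cdot g_s\,ds$ qualifies with $\L=-g\in C^0(\Th)$ and $Z=0$; no further regularity of $g$ in $t$ or $\o$ is needed. So your $\e$-freezing detour is unnecessary, though entirely valid; what it buys is the (reassuring, and consistent with Proposition \ref{prop:equiv}) confirmation that linear test processes suffice. One genuinely useful addition on your side is the preliminary localization ensuring $\hat u_{\cdot\we\ch}\in\dbS^2$ before invoking Lemma \ref{lem:maximumpoint}, an integrability point the paper leaves implicit; just be aware that transferring the strict inequality $\hat u_0>\ol\cE_L[\hat u_{\t_0}]$ to $\ch=\t_0\we\s_n$ itself needs a small domination argument (on $\{\s_n<\t_0\}$), so this step should be spelled out rather than asserted.
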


\begin{proof} 
Suppose to the contrary that $\hat{u}_t(\o)>\ol\cE^{t,\o}_L[\hat{u}^{t,\o}_\ch]$ for some $(t,\o)\in[0,T)\times\O$ and $\ch\in\cT^+_{T-t}$. Then, it follows from Lemma \ref{lem:maximumpoint} that there exist $\o^*\in \O$ and $t^* \in [t, t+\ch(\o^*))$  such that $0\in\ul\cA_L\hat{u}_{t^*}(\o^*)$, that is, there exists $\ch'\in\cT^+_{T-t^*}$ such that
\beaa
-\hat u_{t^*}(\o^*)\le \ul\cE_L^{t^*,\o^*}\Big[-\hat u^{t^*,\o^*}_{\t}\Big] &\mbox{for all}& \t\in \cT_{\ch'}.
\eeaa
Rewriting it we have
\beaa
-u_{t^*}(\o^*)\le \ul\cE_L^{t^*,\o^*}\Big[ \f_\t   - u^{t^*,\o^*}_{\t}\Big]~ \mbox{for all}~ \t\in \cT_{\ch'}, &\mbox{where}& \f_t := -  \int_0^t \big(L_0|u_s| + (F^0)_s+1\big)ds.
\eeaa
Clearly  $\f\in C^{1,2}(\Th)$ with $\cL\f_{t^*}(\o^*) = -L_0|u_{t^*}(\o^*)| -F^0_{t^*}(\o^*)-1$ and $\pa_\o \f_{t^*}(\o^*) = 0$. Then the above inequality implies that $\f\in \ul\cA_L u_{t^*}(\o^*)$.    Now by the viscosity subsolution property of $u$ and Assumption \ref{assum:F}, we have
\beaa
0 &\ge& -\cL\f_{t^*}(\o^*) - F_{t^*}(\o^*, u_{t^*}(\o^*), \si^T_{t^*}(\o^*) \pa_\o \f_{t^*}(\o^*)) \\
&=&  L_0|u_{t^*}(\o^*)| +F^0_{t^*}(\o^*)+1 -  F_{t^*}(\o^*, u_{t^*}(\o^*), 0)  \ge F^0_{t^*}(\o^*) +1 -  F_{t^*}(\o^*, 0, 0) \ge 1,
\eeaa
contradiction.
\end{proof} 

Unlike the heat equation case, the above property and the corresponding statement for a viscosity supersolution $v$ does not lead to the comparison principle directly. Our main idea is the following punctual differentiability of $u$. 

\begin{prop}\label{prop: submartingale PC2}
 Assume  $u$ is a $\dbP_\si-$semimartingale with decomposition: $du_t = Z_t \cd dB_t + dA_t$, where $Z\in \dbH^2$ and $A\in \dbL^0(\dbF)$ is continuous and has finite variation, $\dbP_\si$-a.s. Then there exist a Borel set $\dbT^u\subset [0, T]$ and $\O^u_t \in \cF_t$ for each $t\in \dbT^u$ such that, for any $L>0$,
\bea
\label{TOu}
Leb(\dbT^u) = T, ~\dbP_\si(\O^u_t) = 1,~~ \mbox{and $u$ is $\cP_L$-punctually $C^{1,2}$ at $(t,\o)$ for all $t\in \dbT^u$, $\o\in \O^u_t$.}
\eea
\end{prop}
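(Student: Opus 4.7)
The strategy mirrors Aleksandroff's theorem on a.e.\ twice-differentiability of convex functions. At a ``generic'' time--path $(t,\o)$, the natural candidate for a point of $\cJ_L u_t(\o)$ (simultaneously for every $L$) is
$$
\b := Z_t(\o), \qquad \a := \dot A_t(\o),
$$
where $\dot A_t(\o)$ denotes the left-Lebesgue derivative of $s\mapsto A_s(\o)$ at $s=t$, which is $\cF_t$-measurable and agrees with the two-sided Lebesgue derivative at differentiability points. Since $\si^T Z\in L^2(dt\otimes d\dbP_\si)$ (by $Z\in \dbH^2$) and $A$ is $\dbP_\si$-a.s.\ continuous of finite variation, standard Lebesgue differentiation combined with Fubini produces a Borel set $\dbT^u\subset[0,T]$ of full Lebesgue measure and, for each $t\in\dbT^u$, an $\cF_t$-measurable event $\O^u_t$ with $\dbP_\si(\O^u_t)=1$ on which $t$ is simultaneously a Lebesgue point of $s\mapsto \si_s^T Z_s$ and a differentiability point of $s\mapsto A_s$. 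Via the conditional measure identity $\dbP_\si^{t,\o}=\dbP_{\si^{t,\o}}$ from Proposition~\ref{prop:rcpd}, these a.s.\ regularities transfer to the corresponding regularities at $r=0$ of the shifted process under $\dbP_{\si^{t,\o}}$.

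Now fix $L>0$. By Proposition~\ref{new sum derv}(i) applied to $-u$ (whose semimartingale decomposition has coefficients $-Z,-A$ and candidates $(-\a,-\b)$), it is enough to prove $(\a,\b)\in cl(\ul\cJ_L u_t(\o))$. I will in fact establish the stronger statement that for every $\e>0$, $(\a+C\e,\b)\in\ul\cJ_L u_t(\o)$ with the fixed constant $C:=L+1$. For any $\dbP=\dbP_{\si^{t,\o},\l}\in\cP_L^{t,\o}$, Girsanov cancels the martingale term and yields
$$
\dbE^\dbP\big[u^{t,\o}_\tau - u_t(\o) - (\a+C\e)\tau - \b\cdot B_\tau\big]
= \dbE^\dbP\Big[\!\int_0^\tau (\si^{t,\o}_r)^T(Z^{t,\o}_r-\b)\cdot \l_r\,dr + A^{t,\o}_\tau - A_t(\o) - (\a+C\e)\tau\Big].
$$
Take as localizing time
$$
\ch := \inf\!\Big\{r>0:\ \tfrac{1}{r}\!\int_0^r\!|(\si^{t,\o}_s)^T(Z^{t,\o}_s-\b)|^2 ds > \e^2\ \text{or}\ |A^{t,\o}_r - A_t(\o) - \a r|>\e r\Big\}\wedge h_0,
$$
which is $\dbP_{\si^{t,\o}}$-a.s.\ positive by Step~1 and hence $\dbP$-a.s.\ positive for every $\dbP\in\cP_L^{t,\o}$ by equivalence of measures. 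On $[0,\ch]$, Cauchy--Schwarz upgrades the $L^2$-average bound to $\int_0^\tau |(\si^{t,\o}_r)^T(Z^{t,\o}_r-\b)|\,dr \le \e\tau$, and, together with $|\l|\le L$ and $|A^{t,\o}_\tau - A_t(\o) - \a\tau|\le \e\tau$, the right-hand side of the display is dominated by $(L+1-C)\e\,\dbE^\dbP[\tau] \le 0$. Taking the supremum over $\cP_L^{t,\o}$ proves the claim, and letting $\e\to 0$ gives $(\a,\b)\in cl(\ul\cJ_L u_t(\o))$.

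The main conceptual obstacle is that $Z$ need not be pathwise in $L^2(ds)$ when $\si$ is degenerate, so the Lebesgue-point argument has to be run on the ``effective gradient'' $\si^T Z$ that genuinely appears in the Girsanov drift; Cauchy--Schwarz is then needed to convert the $L^2$-average estimate into the required $L^1$-bound on the drift. Secondary technicalities are the $\cF_t$-measurability of $\O^u_t$ and the requirement $\ch\in\cT^+$ (strict positivity on every path rather than a.s.): the former is handled by choosing the $\cF_t$-measurable left-derivative version of $\dot A_t$ and a Borel version of the Fubini good set, the latter by modifying $\ch$ on the negligible set where the Lebesgue regularities fail.
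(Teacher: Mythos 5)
Your proof is correct and follows essentially the same route as the paper's: the candidate jet is $(\dot A_t(\o),Z_t(\o))$, one shows $(\dot A_t+C\e,Z_t)\in\ul\cJ_L u_t(\o)$ (and symmetrically for the superjet) via Girsanov and a localizing time built from Lebesgue-point and differentiability properties, then lets $\e\to0$. Two remarks on the points where you deviate. First, you run Lebesgue differentiation on $|\si^TZ|^2$ and recover the needed $L^1$ bound on the drift by Cauchy--Schwarz, whereas the paper works with $L^1$-Lebesgue points of $\si^TZ$ and splits $\si_s^T(Z_s-Z_t)=(\si_s^TZ_s-\si_t^TZ_t)+(\si_t-\si_s)^TZ_t$, controlling the second piece by the continuity of $\si$; your variant is legitimate since $Z\in\dbH^2$ gives pathwise $L^2$-integrability of $\si^TZ$, but note that the same splitting (hence the continuity of $\si$, which you do not invoke) is still needed to pass from a Lebesgue point of $s\mapsto\si_s^TZ_s$ to smallness of the averages of $|\si_s^T(Z_s-Z_t)|^2$. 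Second, your fix for the $\cF_t$-measurability of $\O^u_t$ via a ``left-derivative version'' does not work as stated: punctual $C^{1,2}$-differentiability at $(t,\o)$ is a forward-looking property (the jets and the localizing time depend on the path after $t$), so the good set is a priori only $\cF_{t+}$-measurable, and no left-sided modification changes that. The correct reduction to $\cF_t$ is via Blumenthal's $0$-$1$ law under $\dbP_\si$ (Proposition \ref{prop:01}), which is how the paper proceeds; similarly, the localizing time should be replaced by a genuine $\dbF$-stopping time via Lemma \ref{lem:hit}, not merely modified to be positive on every path. These are technical rather than conceptual gaps, and the main argument stands.
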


\proof Denote 
\beaa
\zeta_t := \limsup_{0\downarrow  h \in \dbQ}{1\over h} \int_t^{t+h} |\si_s^TZ_s - \si_t^TZ_t|ds,~ \dot{A}^+_t := \limsup_{0\downarrow  h\in \dbQ}{1\over h}[A_{t+h} - A_t],~ \dot{A}^-_t := \liminf_{0\downarrow  h\in \dbQ}{1\over h}[A_{t+h} - A_t].
\eeaa
Note that 
the processes $\zeta, \dot{A}^+$, and $\dot{A}^-_t$ are $\dbF^+$-measurable (with possible values $\infty$ and $-\infty$).  Denote
\bea
\label{Th0}
\left.\ba{lll}
\O_0 := \Big\{\o\in \O: \int_0^T |\si_t^TZ_t(\o)|dt <\infty, \mbox{and $A$ is continuous and has finite variation on $[0, T]$}\Big\};\\
\Th_0 := \Big\{(t,\o)\in [0, T) \times \O: \zeta_t(\o) = 0,    \dot{A}^+_t(\o) =  \dot{A}^-_t(\o) \in \dbR\Big\} \in \cB\big([0,T]\big)\times \cF_T,
\ea\right.
\eea
Then $\dbP_\si(\O_0)=1$, and, by the Lebesgue differentiation theorem (see e.g. \cite{Rudin} Theorem 7.7, p. 139), 
\beaa
\mbox{Leb}\Big[t:  (t,\o) \in \Th_0\Big] = T &\mbox{for all}& \o\in \O_0.
\eeaa
Applying Fubini Theorem there exists $\dbT^u \subset [0, T]$ such that
\bea
\label{dbT}
\mbox{Leb}[\dbT^u] = T &\mbox{and}& \dbP_\si[\O_t^1] =1 ~\mbox{for all}~ t\in \dbT^u,~ \mbox{where}~ \O^1_t := \{\o\in \O: (t,\o) \in \Th_0\}.
\eea
Note that $\O^1_t \in \cF_{t+}\subset \cF^*_t$, thanks to Proposition \ref{prop:01} in Appendix. 
Moreover, for any $t\in \dbT^u$,  by Proposition \ref{prop:rcpd} one can easily see that there exists $\O^2_t\in \cF_t$ such that  
\bea
\label{O2t}
\dbP_\si[\O^2_t] = 1 &\mbox{and}& d u^{t,\o}_s = Z^{t,\o}_s \cd dB_s + dA^{t,\o}_s, 0\le s\le T-t, ~\dbP_{\si^{t,\o}}\mbox{-a.s. for all}~\o\in \O^2_t.
\eea
Now define $\O_t := \O^1_t \cap \O^2_t\cap \O_0\in\cF^*_t$ for all $t\in \dbT^u$, then we may find $\O^u_t\subset\O_t$ such that
\bea
\label{Ot}
\O^u_t \in \cF_t,\q \dbP_\si[\O^u_t] = 1,\q \mbox{for all}~t\in \dbT^u.
\eea

Define $ \dot{A}_t(\o) :=  \dot{A}^+_t(\o) =  \dot{A}^-_t(\o)$ for $(t,\o)\in \Th_0$.  We claim that $(\dot{A}_t(\o), Z_t(\o))\in \cJ_L u_t(\o)$ for all $t\in \dbT^u$, $\o\in \O^u_t$ and $L>0$.
Without loss of generality, we shall only show that 
\bea
\label{AZJ}
(\dot{A}_t(\o)+\e, Z_t(\o))\in \ul\cJ_L u_t(\o) &\mbox{for any}&\e>0. 
\eea
Indeed, fix $t\in \dbT^u$ and $\o\in \O^u_t$. First, since $A(\o)$ is continuous, we have
\beaa
\lim_{h\downarrow 0}{1\over h} \int_t^{t+h} |\si_s^TZ_s(\o) - \si_t^TZ_t(\o)|ds =0,~  \lim_{h\downarrow 0}{1\over h}[A_{t+h}(\o) - A_t(\o)] = \dot{A}_t(\o).
\eeaa
Next, set $\d:= {\e\over 2L(1+|Z_t(\o)|}$. By Lemma \ref{lem:hit} in Appendix, there exists $\ch\in \cT_{T-t}$ such that
\beaa
& \ch = \inf\Big\{s> 0: \int_0^s |(\si^TZ)^{t,\o}_r - (\si^TZ)_t(\o)|dr \ge \d s, ~\mbox{or}~ |\si^{t,\o}_s - \si_t(\o)|\ge \d, &\\
& \mbox{or}~ A^{t,\o}_{s}-A_t(\o) \ge (\dot{A}_t(\o) + {\e\over 2}) s\Big\} \; \wedge \; (T-t),\q \dbP_{\si^{t,\o}}\mbox{-a.s.} & 
\eeaa
By \reff{Th0} we see that $\ch>0$ and thus $\ch \in \cT^+_{T-t}$. For any $\l\in \dbL_L(\dbF)$ and $\t\in \cT_\ch$, by \reff{O2t} we have
\beaa
&&\dbE^{\dbP_{\si^{t,\o},\l}}\Big[u^{t,\o}_\t - Q^{ \dot{A}_t(\o)+\e, Z_t(\o)}_\t \Big] - u_t(\o) \\
&=&  \dbE^{\dbP_{\si^{t,\o},\l}}\Big[u^{t,\o}_\t  - u^{t,\o}_0-  ( \dot{A}_t(\o)+\e) \t -  Z_t(\o) \cd B_\t \Big] \\ 
 &=& \dbE^{\dbP_{\si^{t,\o},\l}}\Big[ \int_0^\t [Z^{t,\o}_s - Z_t(\o)] \cd dB_s  + (A^{t,\o}_\t - A^{t,\o}_0) -( \dot{A}_t(\o)+\e) \t  \Big] \\
&=& \dbE^{\dbP_{\si^{t,\o},\l}}\Big[ \int_0^\t [Z^{t,\o}_s - Z^{t,\o}_0] \cd (\si^{t,\o}_s \l_s) ds    + (A^{t,\o}_\t - A^{t,\o}_0) -( \dot{A}_t(\o)+\e) \t  \Big]\\
& \le&  \dbE^{\dbP_{\si^{t,\o},\l}}\Big[L\int_0^\t |(\si^TZ)^{t,\o}_s - (\si^TZ)_t(\o)|ds  + L |Z_t(\o)| \int_0^\t |\si^{t,\o}_s - \si_t(\o)|ds \\
&&\qq + (A^{t,\o}_\t - A^{t,\o}_0) -( \dot{A}_t(\o)+\e) \t  \Big]\\
&\le& \dbE^{\dbP_{\si^{t,\o},\l}}\Big[ L\d \t  + L|Z_t(\o)|\d\t + (\dot{A}_t(\o) + {\e\over 2}) \t -( \dot{A}_t(\o)+\e) \t  \Big] = 0,
\eeaa
Then \reff{AZJ} follows from the arbitrariness of $\l$ and $\t$.
\qed

\subsection{Comparison result for general semilinear PPDEs}

We are now ready for the key step for the proof of Theorem \ref{thm:comparison}. We observe that this statement is an adaptation of the approach of Caffarelli and Cabre \cite{CaffarelliCabre} to the comparison in the context of the standard Crandall-Lions theory of viscosity solutions in finite dimensional spaces. See their Theorem 5.3 p45.

\begin{prop}\label{prop:wsubsol}
Let Assumption \ref{assum:F} hold true, and $u,v\in C^0_{2}(\Theta)$ be $\cP_L$-viscosity subsolution and  supersolution, respectively, of PPDE (\ref{PPDE}) for some $L\geq L_0$.  Then, $w:=u-v$ is an $L$-viscosity subsolution of
\begin{equation}\label{equation w}
-\cL w(t,\o)-L|w_t(\o)|-L|\si^T_t(\o)\pa_\o w_t(\o)|\leq 0.
\end{equation}
\end{prop}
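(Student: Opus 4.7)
The plan is to argue by contradiction, using the punctual differentiability result (Proposition \ref{prop: submartingale PC2}) to turn the viscosity inequalities for $u$ and $v$ into genuine pointwise inequalities on a $\text{Leb}\otimes\dbP_\si$-full-measure set, and then to use an adversarial choice of drift $\l\in\dbL_L(\dbF)$ to promote this pointwise information into an expectation inequality that violates the semijet condition at the offending point.

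First I would establish that $u$ and $v$ are both $\dbP_\si$-semimartingales with useful decompositions. By Lemma \ref{v a.s. PC2} applied to $u$ and its obvious supersolution counterpart (obtained by considering the subsolution $-v$ of the transformed equation with nonlinearity $\tilde F(t,\o,y,z):=-F(t,\o,-y,-z)$) applied to $v$, the processes $\hat u := u + \int_0^\cdot (L_0|u_s|+F^0_s+1)\,ds$ and $\tilde v := -v + \int_0^\cdot (L_0|v_s|+F^0_s+1)\,ds$ are pathwise $\ol\cE_L$-submartingales. Proposition \ref{prop:cEsub}(ii) supplies measures $\dbP^*,\dbP^{**}\in\cP_L$ under which they are genuine submartingales, so Doob-Meyer together with the martingale representation of Corollary \ref{cor:mrt} yields, after changing to the equivalent measure $\dbP_\si$, decompositions
$$du_t = Z^u_t\cd dB_t + dA^u_t, \qquad dv_t = Z^v_t\cd dB_t + dA^v_t,$$
with $Z^u,Z^v\in\dbH^2$ and $A^u,A^v$ continuous of finite variation; crucially, the singular part of $d(A^u-A^v)$ is non-negative, owing to the sign of the Doob-Meyer increasing parts coming from $\hat u$ and $\tilde v$. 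Applying Proposition \ref{prop: submartingale PC2} to $u$ and to $v$ and intersecting the resulting full-measure sets yields that at $\text{Leb}\otimes\dbP_\si$-a.e.\ $(t,\o)$ both processes are $\cP_L$-punctually $C^{1,2}$ with jet values $(\dot A^u_t(\o),Z^u_t(\o))$ and $(\dot A^v_t(\o),Z^v_t(\o))$. Proposition \ref{prop:punctual}(ii) combined with the $L_0$-Lipschitz property of $F$ (and $L\ge L_0$) then yields
$$\dot A^u_t(\o) - \dot A^v_t(\o) \;\ge\; -L|w_t(\o)| - L\bigl|\si^T_t(\o)\bigl(Z^u_t(\o)-Z^v_t(\o)\bigr)\bigr|, \qquad \text{Leb}\otimes\dbP_\si\text{-a.e.}$$

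Next, suppose by contradiction that \eqref{equation w} fails in the viscosity subsolution sense at some $(t_0,\o_0)$, which we take to be $(0,0)$. By Proposition \ref{prop:equiv} I pick $(\a,\b)\in\ul\cJ_L w_0$ with $\eta:=-\a-L|w_0|-L|\si^T_0\b|>0$; by continuity and after shrinking the localizing time $\ch\in\cT^+$, the gap $-\a-L|w_t(\o)|-L|\si^T_t(\o)\b|\ge \eta/2$ is maintained on $[0,\ch]$. Then I choose an adversarial $\l\in\dbL_L(\dbF)$ with $\l_t\cd\si^T_t(Z^u_t - Z^v_t) = L\bigl|\si^T_t(Z^u_t - Z^v_t)\bigr|$ (e.g.\ the appropriate unit-vector scaling), and compute $\dbE^{\dbP_{\si,\l}}[w_\ch - w_0]$ in two ways. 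Using the semijet condition,
$$\dbE^{\dbP_{\si,\l}}[w_\ch-w_0]\;\le\;\dbE^{\dbP_{\si,\l}}\!\left[\int_0^\ch\bigl(\a+\l_t\cd\si^T_t\b\bigr)dt\right]\;\le\;\dbE^{\dbP_{\si,\l}}\!\left[\int_0^\ch\bigl(\a+L|\si^T_t\b|\bigr)dt\right].$$
Using Girsanov on the semimartingale decomposition, the non-negativity of the singular part of $A^u-A^v$, and the pointwise inequality above,
$$\dbE^{\dbP_{\si,\l}}[w_\ch-w_0]\;\ge\;-L\,\dbE^{\dbP_{\si,\l}}\!\left[\int_0^\ch|w_t|\,dt\right].$$
Combining the two estimates produces $\dbE^{\dbP_{\si,\l}}\bigl[\int_0^\ch(-\a-L|\si^T_t\b|-L|w_t|)\,dt\bigr]\le 0$, hence $(\eta/2)\,\dbE^{\dbP_{\si,\l}}[\ch]\le 0$, the desired contradiction since $\ch\in\cT^+$.

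The main obstacle I anticipate is the bookkeeping of the singular parts of the Doob-Meyer increasing processes: punctual differentiability only controls the absolutely continuous derivative $\dot A^u-\dot A^v$, so one must verify that the residual singular part of $A^u-A^v$ points in the ``right direction'' (non-negative here) in order to pass the pointwise inequality into an inequality in expectation. Once this sign is in hand, the remainder is a routine combination of Girsanov's theorem, the equivalence $\dbP_{\si,\l}\sim\dbP_\si$, and Fubini.
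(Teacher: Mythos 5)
Your argument is correct, and it takes a genuinely different route from the paper's. The paper localizes: it forms $X:=w-Q^{\a+\d,\b}$, builds its Snell envelope $Y$, proves the compensator $K$ is absolutely continuous (their Step 3), deduces that the contact set $\{Y=X\}$ has positive $\mbox{Leb}\otimes\dbP_\si$-measure, and intersects it with the punctual-differentiability sets to produce a \emph{single} point $(t^*,\o^*)$ where the superjet of $X$ coming from optimal stopping can be added (via Proposition \ref{new sum derv}) to the jets of $u$ and $v$. You instead globalize: you first convert punctual differentiability plus Proposition \ref{prop:punctual}(ii) and the $L_0$-Lipschitz property of $F$ into the $\mbox{Leb}\otimes\dbP_\si$-a.e.\ inequality $\dot A^u-\dot A^v\ge -L|w|-L|\si^T(Z^u-Z^v)|$, and then integrate it under the single adversarial measure $\dbP_{\si,\l}$ with $\l\cd\si^T(Z^u-Z^v)=L|\si^T(Z^u-Z^v)|$, playing the result off against the subjet bound at the origin. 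Both proofs need the same preparatory machinery (Lemma \ref{v a.s. PC2}, Proposition \ref{prop:cEsub}, Proposition \ref{prop: submartingale PC2}) and both need the sign of the Doob--Meyer increasing parts $K^u,K^v$ -- you for the non-negativity of the singular part of $A^u-A^v$, the paper for the absolute continuity of $K$ -- and you correctly flagged and resolved that point. What your route buys is the elimination of the Snell envelope and of the paper's Step 3 from the body of this proposition, exploiting more aggressively the fact that $\cP_L$ is dominated so that a single worst-case drift can be fixed; what the paper's route buys is a pointwise-contact structure closer to the Caffarelli--Cabre strategy, which is the template one would need in non-dominated (fully nonlinear) settings where integrating against one adversarial measure is not available. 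Two minor points to make explicit when writing this up: justify that the stochastic integral $\int(Z^u-Z^v)\cd(dB-\si\l\,dt)$ is a true $\dbP_{\si,\l}$-martingale up to $\ch$ (Cauchy--Schwarz against the Girsanov density, since $Z^u,Z^v\in\dbH^2$), and note that the subjet property at $(0,0)$ survives the shrinking of the localizing time $\ch$.
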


Before we prove this proposition, we use it to complete the proof of Theorem \ref{thm:comparison}.

\vspace{2mm}

\no {\bf Proof of Theorem \ref{thm:comparison}}\quad
By Proposition \ref{prop:wsubsol}, functional $u-v$ is a $\cP_L-$viscosity subsolution of PPDE (\ref{equation w}). Clearly, $0$ is a classical supersolution of the same equation. Since $(u-v)_T\le 0$, we conclude from the partial comparison Proposition \ref{prop:partialcomparison} that $u-v\le 0$ on $\Theta$.
\qed

\bs

\no {\bf Proof of Proposition \ref{prop:wsubsol}}\quad
Without loss of generality, we only check the viscosity subsolution property at
$(t,\o)=(0,0)$. For an arbitrary $(\a,\b)\in\ul\cJ_Lw_0$, we want to show that
 \bea\label{requiredu-vsubsol}
 -\a-L|w_0|-L|\si^T_0\b|
 &\le&
 0.
 \eea
 
\no {\bf 1.}  By definition, there exists $\ch\in\cT^+$ such that
 \beaa
 w_{0}
 &=&
 \max_{\t\in\cT_\ch}\ol\cE_L\big[(w-Q^{\a,\b})_{\t}\big].
 \eeaa
 Fix $\d>0$.  By otherwise choosing a smaller $\ch$, we may assume without loss of generality that
 \bea
 \label{Hsmall}
 |\f_t - \f_0|\le \d &\mbox{for}& \f = B, \si, u, v.
 \eea
Recall Definition \ref{defn-Snell} and  introduce the processes
\beaa
X := w-Q^{\a+\d,\b}
\q \mbox{and} \q
Y := \mbox{Snell}(X_{\cdot\we\ch}).
\eeaa
Clearly, since $\d>0$,
 \bea\label{mmhat}
 \ol\cE_L\left[X_{\ch}\right] < w_0 = X_{0} \le Y_0 ~~&\mbox{and}& Y_{\ch} = X_{\ch}, ~~\dbP_\si\mbox{-a.s.}
 \eea
Then, it follows from \eqref{mmhat} and Theorem \ref{thm:optimalstop_new} (iii) that there exists $\dbP^*\in\cP_L$ and $K\in\dbI^2$ such that
 \beaa
 0&>&  \ol\cE_L\left[Y_{\ch}-Y_0\right] \ge \dbE^{\dbP^*}\left[Y_{\ch}-Y_0\right] = - \dbE^{\dbP^*}\left[K_{\ch}\right] = -   \dbE^{\dbP^*}\left[\int_0^{\ch} \1_{\{Y_t=X_t\}} dK_t\right] .
\eeaa
We shall prove in Step 3 below that
 \bea\label{Claim1}
 K
 &\mbox{is absolutely continuous,}&
 \dbP_\si-a.s.
 \eea
Then, denoting by $\dot{K}$ the derivative of $K$ and noticing that $\dbP^*$ is equivalent to $\dbP_\si$, we deduce from the previous inequalities that:
\bea
 \label{LP>0}
 \dbE^{\dbP^*}\left[\int_0^{\ch} \1_{\{Y_t=X_t\}} \dot{K}_tdt\right] >0 &\mbox{and thus}& \mbox{Leb}\otimes\dbP_\si\big[ t< {\ch}, Y_t=X_t\big] > 0. 
\eea

Moreover, combining Lemma \ref{v a.s. PC2}, Remark \ref{rem-semimg}, and Proposition \ref{prop:cEsub}, we see that $\hat u$, and hence $u$, is a $\dbP_\si$-semimartingale.  Then by Proposition \ref{prop: submartingale PC2},   there exist measurable sets $\dbT^u\subset [0, T]$ and $\O^u_t \in \cF_t$ for each $t\in \dbT^u$ such that \reff{TOu} holds. Similarly, we may find $\dbT^v$ and $\O^v_t$ such that  \reff{TOu} holds for $v$ as well. Then \reff{LP>0} leads to:
 \beaa
 \mbox{Leb}\otimes\dbP_\si\big[ t\in[0,{\ch})\cap\mathbb{T}^u\cap\mathbb{T}^v, Y_t=X_t \big]
 &>&
 0,
 \eeaa
and thus there exists 
 \beaa
 t^*\in\mathbb{T}^v\cap\mathbb{T}^u
 &\mbox{such that}&
 \dbP_\si\big[t^*< \ch, Y_{t^*}= X_{t^*}\big]
 \;>\;
 0,
 \eeaa
which implies further that, recalling the $V$ defined in \reff{Voptimal} and Theorem \ref{thm:optimalstop_new} (i),
 \beaa
  \dbP_\si\Big[\O^u_{t^*}\cap\O^v_{t^*}\cap\{t^*<\ch, Y_{t^*}= X_{t^*}\}  \cap \{Y_{t^*} = V_{t^*}\} \Big]
 \;>\;
 0.
 \eeaa
Therefore, there exists $\omega^{*}\in \O$ such that
 \bea
 \label{to*}
\left.\ba{c}
\mbox{both $u$ and $v$ are $\cP_L$-punctually $C^{1,2}$ at $(t^*, \o^*)$,}\\
t^*<{\ch}(\o^*) ~\mbox{and}~ X_{t^*}(\o^*)  = \sup_{\t\in \cT} \ol\cE^{t^*,\o^*}_L\big[X^{t^*,\o^*}_{\t\we (\ch^{t^*,\o^*}-t^*)}\big].
 \ea\right.
 \eea
 
 \no{\bf 2.}   Let  $(\a^u,\b^u)\in\cJ_L u(t^*,\o^*)\subset cl(\ol J_L u_{t^*}(\o^*))$ and $(\a^v,\b^v)\in\cJ_L v(t^*,\o^*) \subset cl(\ul J_L v_{t^*}(\o^*))$. Then $(\a^u -\d,\b^u)\in \ol J_L u_{t^*}(\o^*)$ and $(\a^v+\d,\b^v)\in \ul \cJ_L v(t^*,\o^*)$. Apply Proposition \ref{new sum derv}, we have
 \bea
 \label{ab'}
 (\a', \b') \in \ol \cJ_L X_{t^*}(\o^*), &\mbox{where}&  \a':=  \a^u - \a^v-\a-3\d, ~ \b' := \b^u-\b^v-\b.
 \eea
 Choose $\l\in \dbL_L(\dbF)$ such that  $(\si^T)^{t^*,\o^*} \b' \cd \l = L|(\si^T)^{t^*,\o^*} \b'|$.  Then, for any $\e>0$, letting $\ch'\le \ch^{t^*,\o^*}-t^*$ be a common localizing time satisfying $|\si^{t^*,\o^*}_t - \si_{t^*}(\o^*)|\le \e$ for $0\le t\le \ch'$, we have
 \beaa
 X_{t^*}(\o^*) &\le& \ul\cE_L\Big[X^{t^*,\o^*}_{\ch'} - Q^{\a',\b'}_{\ch'}\Big] \le \dbE^{\dbP_{\si^{t^*,\o^*},\l}}\Big[X^{t^*,\o^*}_{\ch'} - Q^{\a',\b'}_{\ch'}\Big] \\
 &=& \dbE^{\dbP_{\si^{t^*,\o^*},\l}}\Big[X^{t^*,\o^*}_{\ch'} - \a' \ch' - \int_0^{\ch'} L|(\si^T)^{t^*,\o^*}_t \b'| dt\Big]\\
 &\le&  \ol\cE^{t^*,\o^*}_L[ X^{t^*,\o^*}_{\ch'}] -  (\a' + L|\si^T_{t^*}(\o^*) \b'| - L\e |\b'|) \dbE^{\dbP_{\si^{t^*,\o^*},\l}}[ \ch'].
 \eeaa 
 This, together with the optimality in \reff{to*}, implies that $\a' + L|\si^T_{t^*}(\o^*) \b'| - L\e |\b'|\le 0$. Since $\e>0$ is arbitrary, we obtain 
\beaa
\a'+ L|\si^T_{t^*}(\o^*) \b'|  \le 0.
\eeaa

Moreover, applying Proposition \ref{prop:equiv}, the semi-viscosity properties  of $u$ and $v$ lead to
 \beaa
 -\a^u - F_{t^*}(\o^*, u_{t^*}(\o^*), \si^T_{t^*}(\o^*) \b^u) \le 0,\q  -\a^v - F(_{t^*}(\o^*, v_{t^*}(\o^*), \si^T_{t^*}(\o^*) \b^v) \ge 0.
 \eeaa
 Then, recalling \reff{ab'} and by \reff{Hsmall},
 \beaa
 0 &\le& \a^u + F_{t^*}(\o^*, u_{t^*}(\o^*), \si^T_{t^*}(\o^*) \b^u)  -\a^v - F_{t^*}(\o^*, v_{t^*}(\o^*), \si^T_{t^*}(\o^*) \b^v) - \a' - L|\si^T_{t^*}(\o^*) \b'|\\
 &\le& \a+3\d + L |w_{t^*}(\o^*)| + L|\si^T_{t^*}(\o^*) \b|  \le \a+ L |w_0| + L|\si^T_0 \b|  + (3+ L + L|\b|) \d. 
 \eeaa
Now send $\d\to 0$, we  obtain  \reff{requiredu-vsubsol}.

 \ms
 
\no {\bf 3.} It remains to prove \eqref{Claim1}. By Proposition \ref{v a.s. PC2} and Remark \ref{rem-semimg}, we know the process $\hat u$  is an $\ol\cE_L$-submartingale.  Then it follows from Proposition \ref{prop:cEsub} and Corollary \ref{cor:mrt} that there exist $\l^u\in \dbL_L(\dbF)$ and $K^u\in \dbI^2$ such that 
\beaa
d \hat u_t =  Z^u_t \cd (dB_t - \si_t \l^u_t dt) + d K^u_t,\q \dbP_\si\mbox{-a.s.}  
\eeaa
This implies
\beaa
d u_t = Z^u_t \cd dB_t - (\si_t \l^u_t \cd Z^u_t - L_0 |u_t|-F^0_t -1) dt + dK^u_t,\q \dbP_\si\mbox{-a.s.}  
\eeaa
Similarly, for some  $\l^v\in \dbL_L(\dbF)$ and $K^v\in \dbI^2$,
\beaa
d v_t = Z^v_t \cd dB_t + (-\si_t \l^v_t \cd Z^v_t + L_0 |v_t|+F^0_t+1) dt - dK^v_t,\q \dbP_\si\mbox{-a.s.}  
\eeaa 
Thus, with certain appropriately defined processes $Z^X$, $\si^X$, and the $\l^*$ corresponding to $\dbP^*$,  
\bea
\label{dX}
dX_t = Z^X_t  \cd (dB_t - \si_t \l^*_tdt)  - \si^X_t dt + d(K^u_t + K^v_t)  ,\q \dbP_\si\mbox{-a.s.} 
\eea

Now for any $0\le s\le t\le T$, define $\t_s:=\inf\{t\geq s\we\ch: X_t=Y_t\}$. Recalling Theorem \ref{thm:optimalstop_new} (iii), we have  $K_{\t_s} = K_{s\we\ch}$, $\dbP_\si$-a.s. Then, by \reff{dX} we have
\beaa
&\dbE^{\dbP^*}\Big[K_{t\wedge \ch} - K_{s\wedge \ch} \Big|\cF_{s\wedge \ch}\Big] 
= \dbE^{\dbP^*}\Big[K_{t\wedge \ch} - K_{\t_s}\Big|\cF_{s\wedge \ch}\Big]
=  \dbE^{\dbP^*}\Big[ Y_{\t_s}-Y_{t\wedge \ch} \Big|\cF_{s\wedge \ch}\Big] 
\le \dbE^{\dbP^*}\Big[ X_{\t_s}-X_{t\wedge \ch} \Big|\cF_{s\wedge \ch}\Big] &
\\
 & = \dbE^{\dbP^*}\Big[\int_{\t_s}^{t\wedge \ch} (\si^X_r dr - dK^u_r - dK^v_r) \Big|\cF_{s\wedge \ch}\Big] 
\le  \dbE^{\dbP^*}\Big[\int_{s\wedge \ch}^{t\wedge \ch} |\si^X_r| dr  \Big|\cF_{s\wedge \ch}\Big].&
\eeaa
This implies that $dK_t \le |\si^X_t|dt$, $\dbP^*$-a.s. and hence also $\dbP_\si$-a.s.
\qed

\section{Existence}\label{sec:existence}

To construct a viscosity solution to a semilinear path-dependent PDE, we need to introduce BSDEs. Now  for any $(t,\o)\in\Th$,  $\t\in \cT_{T-t}$, and $\xi \in \dbL^2(\cF_{\t}, \dbP_{\si^{t,\o}})$, consider the following BSDE under $\dbP_{\si^{t,\o}}$:
\bea
\label{BSDE}
Y_s = \xi + \int_s^\t F^{t,\o}_r(B_\cd, Y_r, (\si^T)^{t,\o}_r Z_r) dr - \int_s^\t Z_r\cd dB_r,\q 0\le s\le \t, \dbP_{\si^{t,\o}}\mbox{-a.s.}
\eea
By Assumption \ref{assum:F} and Theorem \ref{thm:mrt}, additionally assuming that 
\beaa
\dbE^{\dbP_{\si^{t,\o}}}\Big[\int_0^{T-t}F^{t,\o}_s(B,0,0)^2ds\Big]<\infty
\q
\mbox{for all}~(t,\o)\in\Th,
\eeaa
one may easily prove  by standard arguments that the above BSDE admits a unique $\dbF$-measurable solution, denoted as  $(\cY^{t,\o}(\t, \xi), \cZ^{t,\o}(\t, \xi))$. Now, fix $\xi \in \dbL^0(\cF_T)$ such that $\xi^{t,\o}\in  \dbL^2(\cF_{T-t}, \dbP_{\si^{t,\o}})$ for any $(t,\o)\in\Th$, define
\bea
\label{urep}
u(t,\o) := \cY^{t,\o}_0(T-t, \xi^{t,\o}).
\eea 

\begin{thm}
\label{thm:existence}
Let Assumption  \ref{assum:F} hold true. Assume $F$ is continuous in $t$ and $u\in C^0_2(\Th)$. Then $u$ is an $L$-viscosity solution of PPDE \reff{PPDE}  for any $L\ge L_0$.
\end{thm}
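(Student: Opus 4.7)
The plan is to prove the subsolution property by contradiction, combining the BSDE dynamics satisfied by $u$ with a Girsanov change of measure and a careful choice of drift; the supersolution statement follows by the symmetric argument, swapping $\ul\cA_L$ for $\ol\cA_L$ and $\ul\cE_L$ for $\ol\cE_L$.

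The first step is to establish the flow property of $u$ along shifted paths: for every $(t,\o)\in\Th$, the process $Y_s:=u^{t,\o}_s$, together with some $Z\in\dbH^2$, satisfies
\begin{equation*}
Y_s=\xi^{t,\o}+\int_s^{T-t}F^{t,\o}_r(B_\cdot,Y_r,(\si^T)^{t,\o}_rZ_r)\,dr-\int_s^{T-t}Z_r\cdot dB_r,\quad\dbP_{\si^{t,\o}}\text{-a.s.}
\end{equation*}
This follows from BSDE uniqueness together with Proposition~\ref{prop:rcpd}: the r.c.p.d.\ of $\dbP_{\si^{t,\o}}$ at time $s$ is $\dbP_{\si^{(t+s,\o\otimes_t B)}}$, so conditioning the BSDE defining $u(t,\o)$ at time $s$ produces the BSDE whose initial value is $u(t+s,\o\otimes_t B)=u^{t,\o}_s$.

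For the subsolution property, WLOG take $(t,\o)=(0,0)$ and assume toward a contradiction that $\f\in\ul\cA_Lu_0$ with localizing time $\ch\in\cT^+$ satisfies $c:=-\cL\f_0-F(0,0,u_0,\si^T_0\pa_\o\f_0)>0$. By continuity of $\cL\f,\pa_\o\f,\si,u$ and of $F$, I may shrink $\ch$ so that $-\cL\f_r-F_r(\cdot,u_r,\si^T_r\pa_\o\f_r)\ge c/2$ on $[0,\ch]$. For arbitrary $\l\in\dbL_L(\dbF)$, Itô's formula applied to $\f$, together with the BSDE for $u$, and Girsanov (so that $\tilde W_t:=B_t-\int_0^t\si_s\l_s\,ds$ is a $\dbP_{\si,\l}$-Brownian motion and the stochastic integrals vanish in expectation), yield for every $\t\in\cT_\ch$
\begin{equation*}
(\f-u)_0-\dbE^{\dbP_{\si,\l}}\!\big[(\f-u)_\t\big]=\dbE^{\dbP_{\si,\l}}\!\!\int_0^\t\!\Big[-\cL\f_r-F_r(u_r,\si^T_rZ_r)+(Z_r-\pa_\o\f_r)\cdot\si_r\l_r\Big]dr.
\end{equation*}
Decomposing $-F_r(u_r,\si^T_rZ_r)=-F_r(u_r,\si^T_r\pa_\o\f_r)+[F_r(u_r,\si^T_r\pa_\o\f_r)-F_r(u_r,\si^T_rZ_r)]$ and using the $L_0$-Lipschitz property of $F$ in $z$, the integrand is bounded below by $c/2-L_0|\si^T_r(Z_r-\pa_\o\f_r)|+(Z_r-\pa_\o\f_r)\cdot\si_r\l_r$.

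Now I would choose $\l_r:=L\,\si^T_r(Z_r-\pa_\o\f_r)/|\si^T_r(Z_r-\pa_\o\f_r)|$ (with the convention $0/0:=0$), which lies in $\dbL_L(\dbF)$ and satisfies $(Z_r-\pa_\o\f_r)\cdot\si_r\l_r=L|\si^T_r(Z_r-\pa_\o\f_r)|$. Since $L\ge L_0$, the integrand is $\ge c/2+(L-L_0)|\si^T_r(Z_r-\pa_\o\f_r)|\ge c/2$, so taking $\t=\ch$ the right-hand side is $\ge(c/2)\dbE^{\dbP_{\si,\l}}[\ch]>0$. This contradicts the tangency chain $(\f-u)_0\le\ul\cE_L[(\f-u)_\ch]\le\dbE^{\dbP_{\si,\l}}[(\f-u)_\ch]$, where the second inequality uses that $\ul\cE_L$ is an infimum over $\cP_L\ni\dbP_{\si,\l}$. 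The main obstacle is the rigorous formulation of the flow property in the first step within this path-dependent framework; once this is established the argument reduces to drift selection under Girsanov, made legitimate by the equivalence of all measures in $\cP_L$ with $\dbP_\si$.
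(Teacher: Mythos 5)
Your proposal is correct and follows essentially the same route as the paper's proof: the flow/dynamic programming property of the BSDE representation, the functional It\^o formula applied to $\f-u$, and the selection of a drift $\l\in\dbL_L(\dbF)$ so that the measure $\dbP_{\si,\l}\in\cP_L$ contradicts the tangency condition $(\f-u)_0\le\ul\cE_L[(\f-u)_\ch]$. The only (immaterial) differences are that the paper obtains $\l$ by linearizing $F$ in $z$ (so $|\l|\le L_0\le L$) rather than taking your extremal drift of norm $L$ combined with the Lipschitz bound, and a small imprecision on your side: $B_t-\int_0^t\si_s\l_s\,ds$ is a $\dbP_{\si,\l}$-martingale (equal to $\int_0^t\si_s\,dW_s$ for a $\dbP_{\si,\l}$-Brownian motion $W$), not itself a Brownian motion, though only the martingale property is actually used.
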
 
\proof 
Since $u\in C^0_2(\Th)$, together with standard arguments, \eqref{urep} implies the  
dynamic programming principle: given $(t, \o) \in \Th$ and $\t\in \cT_{T-t}$,
\bea
\label{DPP}
u(t, \o) = \cY^{t,\o}_{t}(\t, u^{t,\o}_\t).
\eea

Without loss of generality, we check only the viscosity subsolution property at $(0,0)$. Assume not, then there exists $\f\in \ul\cA_L u_0$ with localizing time $\ch$ such that $-c := \cL\f_0 + F_0(u_0, \si_0^T\pa_\o \f_0) <0$. By continuity there exists $\t \in \cT^+_\ch$  such that  $\cL\f_t + F_t(u_t, \si_t^T \pa_\o \f_t) \le -{c\over 2} $ for $0\le t\le \t$. Note that $u_t = \cY^{0,0}_t(\t, u_\t)$ and denote $Z_t := \cZ^{0,0}(\t, u_\t)$. Then, by \reff{DPP} and the functional It\^o formula \reff{Ito}, 
\beaa
[\f-u]_\t -[\f-u]_0 &=& \int_0^\t [\cL \f_t  + F_t(u_t, \si_t^T Z_t)] dt + \int_0^\t [\pa_\o \f_t - Z_t] \cd dB_t\\
&\le& \int_0^\t [-{c\over 2}   + F_t(u_t, \si_t^T Z_t) - F_t(u_t, \si_t^T \pa_\o \f_t)] dt + \int_0^\t [\pa_\o \f_t - Z_t] \cd dB_t\\
&=&   \int_0^\t \big[-{c\over 2}   -[\pa_\o \f_t - Z_t] \cd  \si_t \l_t\big] dt + \int_0^\t [\pa_\o \f_t - Z_t] \cd dB_t,~\dbP_\si\mbox{-a.s.}
\eeaa
where $\l \in \dbL_{L_0}(\dbF)$.  Note that $\dbP_{\si,\l}$ and $\dbP_{\si}$ are equivalent. This implies
\beaa
[\f-u]_\t -[\f-u]_0 &\le& -{c\over 2}\t    + \int_0^\t [\pa_\o \f_t - Z_t] \cd [dB_t - \si_t \l_t dt],~\dbP_{\si,\l}\mbox{-a.s.}
\eeaa
Thus, noting that $L\ge L_0$ and that $dB_t - \si_t \l_t dt$ is a $\dbP_{\si,\l}$-martingale,
\beaa
[\f-u]_0 \ge \dbE^{\dbP_{\si,\l}}\Big[[\f-u]_\t + {c\over 2}\t\Big] >  \dbE^{\dbP_{\si,\l}}\Big[[\f-u]_\t \Big] \ge \ul \cE_L\Big[[\f-u]_\t \Big] ,  
\eeaa
contradicting with the fact that $\f\in \ul\cA_L u_0$.
\qed

The following proposition gives a sufficient condition so that $u\in C^0_2(\Th)$. The proof follows from standard BSDE estimates, and thus is omitted.

\begin{prop}
If $F$ and $\xi$ are both uniformly continuous in $\o$ and $F$ is continuous in $t$, then $u\in C^0_2(\Th)$.
\end{prop}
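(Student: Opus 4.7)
The plan is to reduce both assertions—$u^{t,\o}\in\dbS^{t,\o}_2$ and $u\in C^0(\Th)$—to classical BSDE existence and stability, after transferring everything to the canonical Wiener space.

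For the first assertion, I fix $(t,\o)\in\Th$. Assumption \ref{assum:F}, the hypothesis $\xi^{t,\o}\in\dbL^2(\cF_{T-t},\dbP_{\si^{t,\o}})$, and the martingale representation property of Theorem \ref{thm:mrt} under $\dbP_{\si^{t,\o}}$ together yield, by classical BSDE theory, a unique solution $(\cY^{t,\o},\cZ^{t,\o})$ of \eqref{BSDE} on $[0,T-t]$ with $\cY^{t,\o}$ continuous and in $\dbS^2$ under $\dbP_{\si^{t,\o}}$. The dynamic programming identity \reff{DPP} established in the proof of Theorem \ref{thm:existence} identifies $u^{t,\o}_s=\cY^{t,\o}_s$ $\dbP_{\si^{t,\o}}$-a.s., whence $u^{t,\o}\in\dbS^{t,\o}_2$.

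For continuity of $u:\Th\to\dbR$, I pull all BSDEs back to $(\O,\cF,\dbP_0)$ using the strong solution $X^{t,\o}$ of $dX_r=\si^{t,\o}_r(X_\cd)dB_r$ provided by Lemma \ref{lem:weaksolution}, so that $(\tilde Y^{t,\o}_s,\tilde Z^{t,\o}_s):=(\cY^{t,\o}_s(X^{t,\o}),\cZ^{t,\o}_s(X^{t,\o}))$ satisfies a standard BSDE on $[0,T-t]$ under $\dbP_0$ with terminal $\xi^{t,\o}(X^{t,\o})$, driver $F^{t,\o}_r(X^{t,\o},y,(\si^T)^{t,\o}_r(X^{t,\o})z)$, and $\tilde Y^{t,\o}_0=u(t,\o)$. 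To handle the fact that BSDEs at different $t$'s live on different intervals, I extend each one to $[0,T]$ by taking $\xi^{t,\o}(X^{t,\o})$ (which is $\cF_{T-t}$-measurable) as the terminal value at time $T$ and setting $\tilde Z^{t,\o}=0$ and the driver to $0$ on $[T-t,T]$. The classical $L^2$-stability estimate for BSDEs on the common interval $[0,T]$ then gives
\[
|u(t,\o)-u(t',\o')|^2 \le C\,\dbE^{\dbP_0}\Bigl[|\xi^{t,\o}(X^{t,\o})-\xi^{t',\o'}(X^{t',\o'})|^2 + \int_0^T|\Delta\hat F_r|^2 dr\Bigr],
\]
where $\Delta\hat F$ denotes the difference of the extended drivers evaluated at one of the two solutions. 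SDE stability under $\dbP_0$ (by Gronwall, using Assumption \ref{assum:si} and the bound on $\|\o\otimes_t\tilde\o-\o'\otimes_{t'}\tilde\o\|_T$ in terms of $d((t,\o),(t',\o'))$) yields $\dbE^{\dbP_0}[\sup_s|X^{t,\o}_s-X^{t',\o'}_s|^2]\to 0$; combined with the uniform continuity of $\xi$ and $F$ in $\o$ and continuity of $F$ in $t$, both RHS terms vanish as $(t',\o')\to(t,\o)$ in $d$.

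The main technical obstacle is precisely the time-interval mismatch when $t\ne t'$; the trivial extension to $[0,T]$ handles this cleanly, provided one verifies that the extra driver contribution on $[T-(t\we t'),T-(t\vee t')]$ has magnitude $O(|t-t'|)$, which follows from the linear growth of $F$ (Assumption \ref{assum:F}) together with the standard $L^2$-a priori bound on $(\tilde Y^{t,\o},\tilde Z^{t,\o})$.
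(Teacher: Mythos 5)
Your overall strategy --- transferring each shifted BSDE to the Wiener space via the strong solution $X^{t,\o}$, padding the time interval to $[0,T]$ with a trivial driver, and invoking the classical $L^2$-stability estimate together with SDE stability and the moduli of continuity of $\xi$ and $F$ --- is exactly the ``standard BSDE estimates'' the paper alludes to when it omits the proof, and the handling of the interval mismatch is correct. There is, however, one genuine logical flaw in the order of your argument: you establish $u^{t,\o}\in\dbS^{t,\o}_2$ by citing the dynamic programming identity \reff{DPP} ``established in the proof of Theorem \ref{thm:existence}'', but that identity is derived there \emph{from} the hypothesis $u\in C^0_2(\Th)$, which is precisely what you are trying to prove; as written the argument is circular. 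The repair is to reverse your two steps. First prove continuity of $u$ in $d$ by the stability argument (which uses nothing about \reff{DPP}). Then, for fixed $(t,\o)$ and each fixed $s$, the identity $u^{t,\o}_s=\cY^{t,\o}_s$, $\dbP_{\si^{t,\o}}$-a.s., follows directly from Proposition \ref{prop:rcpd} and the uniqueness of the shifted BSDE (the flow property), with no appeal to $u\in C^0_2$; taking a countable dense set of $s$ and using the already-established continuity of $s\mapsto u^{t,\o}_s$ (a consequence of continuity of $u$ in $d$) together with the a.s.\ continuity of $\cY^{t,\o}$ upgrades this to indistinguishability, whence $u^{t,\o}\in\dbS^{t,\o}_2$ because $\cY^{t,\o}\in\dbS^{t,\o}_2$.

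Two smaller points. First, in the pulled-back BSDE the unknown against $dB_r$ under $\dbP_0$ should be taken to be $\hat Z_r:=(\si^T)^{t,\o}_r(X^{t,\o})\,\cZ^{t,\o}_r(X^{t,\o})$, so that the driver becomes $F^{t,\o}_r(X^{t,\o},y,\hat z)$ and is uniformly $L_0$-Lipschitz in $(y,\hat z)$; as literally written, your driver $F^{t,\o}_r(X^{t,\o},y,(\si^T)^{t,\o}_r(X^{t,\o})z)$ has a stochastic, unbounded Lipschitz coefficient in $z$ (since $\si$ only has linear growth in $\o$), and the classical stability estimate would not apply. Second, to conclude that the right-hand side of your stability estimate vanishes you need to convert uniform continuity of $\xi$ and $F$ in $\o$ into $L^2$-convergence; the standard device $\rho(x)^2\le\eps^2+C_\eps x^2$ for a subadditive modulus $\rho$, combined with the $L^2$ SDE stability, does this, and for the $t$-dependence of $F$ mere continuity suffices via dominated convergence using the bound $F^0+L_0(|y|+|z|)$ from Assumption \ref{assum:F} and the square-integrability of $F(\cdot,0,0)$ assumed in Section \ref{sec:existence}. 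With these adjustments the proof is complete.
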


\section{Appendix}\label{sec:appendix}

\subsection{Martingale representation}\label{subsec:mgrep}
 We start with a simple lemma. Recall \reff{Xsi} and denote $X:= X^\si$ for notational simplicity. 

\begin{lem}\label{lem:filtration XW}
For any $\eta\in \dbL^1(\cF^X_T, \dbP_0)$, we have
$
\dbE^{\dbP_0}[\eta|\cF^X_t]=\dbE^{\dbP_0}[\eta|\cF_t],~\dbP_0\mbox{-a.s.}
$
\end{lem}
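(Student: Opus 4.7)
Because $X=X^\si$ is the unique strong solution of the SDE \reff{Xsi}, the process $X$ is $\dbF$-adapted, and hence $\cF^X_t\subset\cF_t$ for every $t\in[0,T]$. By the tower property this yields $\dbE^{\dbP_0}[\eta\,|\,\cF^X_t]=\dbE^{\dbP_0}\bigl[\dbE^{\dbP_0}[\eta\,|\,\cF_t]\,\bigm|\,\cF^X_t\bigr]$, so it is enough to show that $\dbE^{\dbP_0}[\eta\,|\,\cF_t]$ is already $\cF^X_t$-measurable. By a standard monotone-class argument, it suffices to verify this when $\eta=\phi(X_\cdot)$ with $\phi\in C_b(\O)$.

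I will exploit a pathwise flow representation. Fix $t\in[0,T]$ and set $\tilde B_s:=B_{t+s}-B_t$ for $s\in[0,T-t]$; this is a $\dbP_0$-Brownian motion independent of $\cF_t$. Re-reading \reff{Xsi} from time $t$ onward shows that the shifted piece $\{X_{t+s}\}_{s\in[0,T-t]}$ satisfies a Lipschitz SDE driven by $\tilde B$ whose coefficients are measurable functionals of the past path $X_{\cdot\wedge t}$ through $\si$. Picard iteration then produces a Borel map $\Phi:\O\times\O\to\O$ such that, $\dbP_0$-a.s.,
$$X_\cdot \;=\; X_{\cdot\wedge t}\otimes_t \Phi(X_{\cdot\wedge t},\tilde B).$$

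Combining this representation with the independence of $\tilde B$ from $\cF_t$ and a disintegration of the expectation gives
$$\dbE^{\dbP_0}\bigl[\phi(X_\cdot)\,\bigm|\,\cF_t\bigr] \;=\; \int_\O \phi\bigl(\o'\otimes_t \Phi(\o',\tilde b)\bigr)\,d\dbP_0(\tilde b)\;\Big|_{\o'=X_{\cdot\wedge t}},$$
which is a Borel function of $X_{\cdot\wedge t}$ and therefore $\cF^X_t$-measurable. Passing from $\phi\in C_b(\O)$ to a general $\eta\in\dbL^1(\cF^X_T,\dbP_0)$ by density and monotone convergence then finishes the proof.

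\textbf{Main obstacle.} The only delicate point is the joint Borel measurability of the strong solution as a functional of the past path and the driving Brownian increments. This is a routine consequence of the Picard-iteration construction already invoked in Lemma~\ref{lem:weaksolution}, so no tool beyond what the paper already uses is required.
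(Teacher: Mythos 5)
Your argument is correct and rests on the same two facts as the paper's own proof: $\cF^X_t\subset\cF_t$ because $X$ is a strong solution, and the post-$t$ evolution of $X$ is determined by $X_{\cdot\wedge t}$ together with the future increments of $B$, which are independent of $\cF_t$. The paper encodes the latter purely at the $\si$-field level as $\cF^X_T\subset\cF^X_t\vee\cG_t$ with $\cG_t:=\si\{B_s-B_t:s\geq t\}$ and concludes by a monotone class argument on products $\1_E\1_{E'}$, whereas you make the flow map $\Phi$ explicit and invoke the freezing lemma --- a more computational rendering of the same idea.
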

\begin{proof} Denote $\cG_t:=\si\{B_s-B_t:s\geq t\}$.   Since $X$ is a strong solution, we see that $\cF^X_{t}\subset\cF_{t}$ and $\cF^X_T\subset\cF^X_t\vee\cG_t$. In particular, $\cF^X_{t}$ and $\cF_t$ are independent of $\cG_t$ under $\dbP_0$. Then, 
\beaa
\dbE^{\dbP_0}[\1_E\1_{E'}|\cF^X_t]=\1_E\dbP_0[E']=\dbE^{\dbP_0}[\1_E\1_{E'}|\cF_t], &\mbox{for any}& E\in \cF^X_t, E'\in \cG_t.
\eeaa
Now the result follows from the standard argument of monotone class theorem.
\end{proof}

We next establish the martingale representation property for $\dbP_\si$.

\no {\bf Proof of Theorem \ref{thm:mrt}} \q By standard approximation arguments, we may assume without loss of generality that $\xi$ is Lipschitz continuous in $\o$. Denote 
\beaa
u(t,\o) := \dbE^{\dbP_{\si^{t,\o}}}[\xi^{t,\o}] = \dbE^{\dbP_0}\Big[\xi^{t,\o}(X^{t,\o})\Big], &\mbox{where}& X^{t,\o}_s = \int_0^s \si^{t,\o}_r(X^{t,\o}_\cd) dB_r,~\dbP_0\mbox{-a.s.}
\eeaa
Since $\si$ is also Lipschitz continuous in $\o$, one can easily show that $u$ is uniformly Lipschitz continuous in $\o$ and, by Proposition \ref{prop:rcpd} with $\l=0$, $u$ is a $\dbP_\si$-martingale.   

We proceed the rest of the proof in three steps.

\ms
\no {\bf 1.}\q We first assume $\si$ is a constant matrix and show that the above $Z$ exists and is bounded. Indeed, by standard approximation again, we may assume $\xi = g(B_{t_1},\cds, B_{t_n})$ for some $0<t_1<\cds<t_n \le T$ and smooth function $g$. Then one can easily see that $u(t, \o) = v(t, B_{t_1},\cds, B_{t_i}, B_t)$, $t_i\le t<t_{i+1}$, for some smooth function $v$. Applying It\^{o}'s formula we obtain the representation with $Z_t = D v(t, B_{t_1},\cds, B_{t_i}, B_t)$, where $D v$ is the gradient in terms of the last variable $B_t$. It is straightforward to check that $D v$ is bounded by the Lipschitz constant of $\xi$, which implies the boundedness of $Z$.

\ms

\no {\bf 2.}\q We now prove the general case.
Denote $\tilde \xi := \xi(X_\cd)$, $\tilde u := u(X_\cd)$, and $\tilde \si := \si(X_\cd)$.  It follows from Lemma \ref{lem:filtration XW} that $\tilde u$ is a $(\dbP_0, \dbF)$-martingale. By the standard martingale representation theorem under $\dbP_0$, there exists $\tilde Z$ such that $\dbE^{\dbP_0}\big[\int_0^T |\tilde Z_t|^2 dt\big]<\infty$ and 
$d\tilde u_t= \tilde Z_t \cd dB_t$, $\dbP_0$-a.s. We claim that
\bea
\label{mrt-claim}
\tilde Z=\tilde \sigma^T \zeta \q \mbox{for some}~ \zeta\in \dbL^0(\dbF, \dbR^d).
\eea
Then
\beaa
d\tilde u_t= \zeta_t \cd dX_t &\mbox{and thus}& d \la \tilde u, X\ra_t = \tilde\si_t \tilde \si^T_t \zeta_t dt, \q\dbP_0\mbox{-a.s.}
\eeaa 
Rewrite $\si = P  \si^* Q$, where $P$, $Q$ are orthogonal matrices and $\si^*= diag[a_1,\cds, a_{d}]$ is a diagonal matrix. Denote $\tilde P := P(X)$, and similarly for other terms. Since $ \la \tilde u, X\ra \in \dbL^0(\dbF^X)$, we see that $ \tilde\si \tilde \si^T \zeta = \tilde P (\tilde\si^*)^2 \tilde P^T \zeta  \in \dbL^0(\dbF^X)$, and thus $(\tilde\si^*)^2 \tilde P^T \zeta \in \dbL^0(\dbF^X)$. Denote $\tilde P^T \zeta := [\zeta_1,\cds, \zeta_d]^T$, and let $\zeta'$ be determined by $\tilde P^T \zeta' := [\zeta_1\1_{\{\tilde a_1 \neq 0\}},\cds, \zeta_d\1_{\{\tilde a_d\neq 0\}}]^T$. Then one can easily check that

$\bullet$ $\tilde \si^* \tilde P^T \zeta'  =  [\tilde a_1\zeta_1\1_{\{\tilde a_1 \neq 0\}},\cds, \tilde a_d \zeta_d\1_{\{\tilde a_d\neq 0\}}]^T= \tilde \si^* \tilde P^T \zeta$ and thus $\tilde \si \zeta' = \tilde \si \zeta$;

$\bullet$ $\tilde a_i^2 \zeta_i \1_{\{\tilde a_i \neq 0\}} \in \dbL^0(\dbF^X)$, then  $\zeta_i \1_{\{\tilde a_i \neq 0\}} \in \dbL^0(\dbF^X)$, thus $\tilde P^T \zeta' \in \dbL^0(\dbF^X)$ and hence  $\zeta' \in \dbL^0(\dbF^X)$.

\no The second property above implies that   $\zeta' = Z(X)$ for some $Z\in \dbL^0(\dbF)$. Then it follows from the first property that
\beaa
d\tilde u_t=\zeta'_t \cd dX_t,\q\dbP_0\mbox{-a.s.}
&\mbox{and thus}& 
d u_t =Z_t \cd dB_t,\q\dbP_\si\mbox{-a.s.}
\eeaa
which is the desired representation.

\ms
\no {\bf 3.} It remains to  prove the claim \reff{mrt-claim}. Consider the decomposition $\tilde Z = \tilde \si^T \zeta + \eta$, where $\tilde\si\eta = 0$, and let us prove that $\eta = 0$, $\dbP_0$-a.s. For this purpose, let $n>0$, $h:= {T\over n}$, $t_i:=i h$, $i=0,\cds, n$, and denote $\bar\eta_i:=h^{-1}\dbE^{\dbP_0}\big[\int_{t_i}^{t_{i+1}}\eta_sds|\cF_{t_i}\big]$,   $\bar\si_i:=h^{-1}\dbE^{\dbP_0}\big[\int_{t_i}^{t_{i+1}}\tilde\si_sds|\cF_{t_i}\big]$, $i=0,\ldots,n-1$. Then,
 \beaa
 \dbE^{\dbP_0}\Big[\int_0^T |\eta_t|^2dt \Big]
 &=&
 \dbE^{\dbP_0}\Big[\int_0^T \tilde Z_t\cd \eta_t dt \Big]
 \;=\;
 \sum_{i=0}^{n-1} \dbE^{\dbP_0}\Big[\int_{t_i}^{t_{i+1}} \tilde Z_t\cd \bar\eta_i dt \Big]
 +R^n_1,
 \eeaa
where $R^n_1\longrightarrow 0$ as $n\to\infty$. Denoting $B^t_s := B_s - B_t$, it follows from the It\^o isometry that
 \beaa
 \dbE^{\dbP_0}\Big[\int_0^T |\eta_t|^2dt \Big]
 &=&
\sum_{i=0}^{n-1} \dbE^{\dbP_0}\Big[\bar\eta_i \cd B^{t_i}_{t_{i+1}} \int_{t_i}^{t_{i+1}} \tilde Z_t \cd dB_t \Big]
 +R^n_1
 \\
 &=&
\sum_{i=0}^{n-1} \dbE^{\dbP_0}\Big[\big(\tilde u_{t_{i+1}}-\tilde u_{t_i}\big)
                                                          \bar\eta_i \cd B^{t_i}_{t_{i+1}}
                                                   \Big]
 +R^n_1
 \\
 &=&
 \sum_{i=0}^{n-1} \dbE^{\dbP_0}\Big[\big(u_{t_{i+1}}(X)-u_{t_i}(X)\big)
                                                           \bar\eta_i \cd B^{t_i}_{t_{i+1}} \Big]
 +R^n_1
 \\
 &=&
 \sum_{i=0}^{n-1} \dbE^{\dbP_0}\Big[\big(u_{t_{i+1}}(X\otimes_{t_i}\bar\sigma_i B^{t_i})
                                                                   -\dbE^{\dbP_0}[u_{t_{i+1}}(X\otimes_{t_i}\bar\sigma_i B^{t_i})|\cF_{t_i}]\big)                                                           \bar\eta_i \cd B^{t_i}_{t_{i+1}} \Big]
 +R^n_2,
 \eeaa
where we used the fact that $B^{t_i}_{t_{i+1}}$ and $\cF_{t_i}$ are $\dbP_0$-independent.   By the uniform Lipschitz continuity of $u$, we see that $R^n_2\longrightarrow 0$ as $n\to\infty$. We further decompose $\bar\eta_i=\bar\sigma_i^T\eps_i+\hat\eta_i$, where $\bar\sigma_i\hat\eta=0$.  Note that, conditionally on $\cF_{t_i}$,   $\bar\sigma_i B^{t_i}$ and  $\hat\eta_i \cd B^{t_i}_{t_{i+1}} $ are $\dbP_0$-independetnt. Then
 \beaa
& \dbE^{\dbP_0}\Big[\int_0^T |\eta_t|^2dt \Big]
 =
R^n_2+\sum_{i=1}^n r^{n}_i,&\\
 &\mbox{where}~
 r^{n}_i
 :=
 \dbE^{\dbP_0}\Big[\big(u_{t_{i+1}}(X\otimes_{t_i}\bar\sigma_i B^{t_i})
                                                                   - \dbE^{\dbP_0}[u_{t_{i+1}}(X\otimes_{t_i}\bar\sigma_i B^{t_i})|\cF_{t_i}]\big)                                                       \bar\sigma_i^T\eps_i \cd B^{t_i}_{t_{i+1}} \Big].&
 \eeaa
 
We now  analyze $r^n_i$. By Step 1, there exists $\g$ bounded by the Lipschitz constant of $u_{t_{i+1}}$ (in terms of $\o$) such that
\beaa
u_{t_{i+1}}(X\otimes_{t_i}\bar\sigma_i B^{t_i})  - \dbE^{\dbP_0}[u_{t_{i+1}}(X\otimes_{t_i}\bar\sigma_i B^{t_i})|\cF_{t_i}] = \int_{t_i}^{t_{i+1}} \g_t \cd \bar\si_i dB^{t_i}_t.
\eeaa
Then
\beaa
|r^n_i| = \Big|\dbE^{\dbP_0}\Big[\int_{t_i}^{t_{i+1}}\g_t dt \cd \bar\si_i\bar\sigma_i^T\eps_i\Big] \Big|= \Big|\dbE^{\dbP_0}\Big[\int_{t_i}^{t_{i+1}}\g_t dt \cd \bar\si_i\bar\eta_i\Big] \Big| \le  C h\dbE^{\dbP_0}\big[|\bar\si_i\bar\eta_i|\big].
\eeaa
 Since $\tilde\si \eta =0$, then
 \beaa
 0 =  \dbE^{\dbP_0}\Big[\int_{t_i}^{t_{i+1}} \tilde \si_t \eta_t dt\Big|\cF_{t_i}\Big]  = \bar\sigma_i \bar \eta_i +  \dbE^{\dbP_0}\Big[\int_{t_i}^{t_{i+1}}\big( [\tilde \si_t - \bar \si_i] \eta_t  + \bar\si_i [\eta_t - \bar\eta_i]\big)dt\Big|\cF_{t_i}\Big].
 \eeaa
 Thus, noting that  $\si \in  C^0_2(\Th)\subset \dbS^2$ and 
$\dbE^{\dbP_0}\big[\int_0^T |\eta_t|^2 dt\big]<\infty$, 
 \beaa
 \sum_{i=1}^n |r^n_i &\le& C\sum_{i=1}^n \dbE^{\dbP_0}\Big[\int_{t_i}^{t_{i+1}}\big| [\tilde \si_t - \bar \si_i] \eta_t  + \bar\si_i [\eta_t - \bar\eta_i]\big|dt\Big]\\
 &\le& C \Big(\sum_{i=1}^n \dbE^{\dbP_0}\Big[\int_{t_i}^{t_{i+1}}[|\tilde \si_t - \bar \si_i|^2 + |\eta_t - \bar\eta_i|^2] dt\Big)^{1\over 2} \to 0,
  \eeaa
as $n\to \infty$. This implies that $ \dbE^{\dbP_0}\Big[\int_0^T |\eta_t|^2dt \Big] = 0$ and thus proves \reff{mrt-claim}.
\qed

\subsection{Some measurability issues}
As a preparation for the nonlinear optimal stopping problem which will be studied in Section \ref{subsec:optimalstop}, we investigate a subtle but crucial measurability issue here. Recall that $\dbF$ is the natural filtration generated by $B$. Denote:
 \bea
\label{dbF*}
\dbF^*:= \mbox{$\dbP_\si$-augmentation of $\dbF$} &\mbox{and}& \cT^* := \mbox{the set of $\dbF^*$-stopping times}.
\eea

We start with  the Blumenthal $0$-$1$ law under $\dbP_\si$. 
\begin{prop}[Blumenthal's 0-1 law]
\label{prop:01}
Under Assumption \ref{assum:si},   for any bounded $\xi\in \cF_{t+}$, $\dbE^{\dbP_{\si}}[\xi|\cF_t] = \xi$, $\dbP_\si$-a.s.
Consequently, the augmented filtration $\dbF^*$  is right continuous. 
\end{prop}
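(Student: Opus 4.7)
The plan is to reduce the claim to the classical Blumenthal $0$-$1$ law for the Wiener measure $\dbP_0$. Since Assumption \ref{assum:si} yields strong existence for SDE \reff{Xsi}, Lemma \ref{lem:weaksolution} tells us $\dbP_\si=\dbP_0\circ(X^\si)^{-1}$. Write $X:=X^\si$ and, for a bounded $\xi\in\cF_{t+}$ on the canonical space, set $\tilde\xi:=\xi(X_\cdot)$. Because the $\dbP_\si$-law of the canonical process $B$ equals the $\dbP_0$-law of $X$, the target identity $\dbE^{\dbP_\si}[\xi|\cF_t]=\xi$, $\dbP_\si$-a.s., translates into $\dbE^{\dbP_0}[\tilde\xi|\cF^X_t]=\tilde\xi$, $\dbP_0$-a.s., and it suffices to prove the latter.

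For this, I would first use that $X$ being a strong solution gives $\cF^X_s\subset\cF_s$ under $\dbP_0$ for every $s$, so $\tilde\xi$, which is a priori $\cF^X_{t+}$-measurable, is in fact $\cF_{t+}$-measurable on $(\Omega,\dbP_0)$. Two classical facts then complete the argument: the augmented Brownian filtration on $(\Omega,\dbP_0)$ is right-continuous, so the standard Blumenthal $0$-$1$ law for Brownian motion gives $\dbE^{\dbP_0}[\tilde\xi|\cF_t]=\tilde\xi$, $\dbP_0$-a.s.; and Lemma \ref{lem:filtration XW} (applicable since $\tilde\xi\in\dbL^\infty(\cF^X_T,\dbP_0)$) identifies $\dbE^{\dbP_0}[\tilde\xi|\cF^X_t]$ with $\dbE^{\dbP_0}[\tilde\xi|\cF_t]$. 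Chaining these two equalities yields $\dbE^{\dbP_0}[\tilde\xi|\cF^X_t]=\tilde\xi$, which transfers back to the canonical space as desired.

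The right-continuity of $\dbF^*$ follows as a direct corollary: the identity just proved says that every $A\in\cF_{t+}$ agrees with some $A'\in\cF_t$ up to a $\dbP_\si$-null set, so $\cF_{t+}\subset\cF^*_t$; combined with the obvious reverse inclusion this gives $\cF^*_{t+}=\cF^*_t$. The substantive content of the argument is really packed inside Lemma \ref{lem:filtration XW}, whose proof exploited the $\dbP_0$-independence of the post-$t$ Brownian increments from $\cF^X_t$; the only remaining issue here is the bookkeeping of the law transfer between $(\Omega,\dbP_\si)$ and $(\Omega,\dbP_0)$ through $X$, which is not expected to present any serious obstacle.
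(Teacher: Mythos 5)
Your proof is correct and follows essentially the same route as the paper's: define $\tilde\xi:=\xi(X^\si)$, apply the classical Blumenthal $0$-$1$ law under $\dbP_0$, invoke Lemma \ref{lem:filtration XW} to replace $\cF_t$ by $\cF^X_t$, and transfer back through the identity $\dbP_\si=\dbP_0\circ(X^\si)^{-1}$. Your added care about why $\tilde\xi$ is $\cF_{t+}$-measurable (via $\cF^X_s\subset\cF_s$ from strong existence) is a point the paper passes over with ``clearly,'' but it is the same argument.
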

\proof Denote again that $X:=X^\si$ and $\tilde \xi := \xi(X)$. Clearly $\tilde \xi \in \cF_{t+}$, and by the Blumenthal $0$-$1$ law under $\dbP_0$, we have $\dbE^{\dbP_0}[\tilde \xi|\cF_t] = \tilde \xi$, $\dbP_0$-a.s. Since $\tilde\xi \in \dbL^1(\cF^{X}_T, \dbP_0)$, Applying Lemma \ref{lem:filtration XW} we see that $\dbE^{\dbP_0}[\tilde \xi|\cF^X_t] = \tilde \xi$, $\dbP_0$-a.s. which exactly means $\dbE^{\dbP_{\si}}[\xi|\cF_t] = \xi$, $\dbP$-a.s.
\qed

Follow the arguments in \cite{DelacherieMeyer}, we have
\begin{prop}\label{previsible version}
Let  $\t \in \cT^*$ be previsible, namely there exist $\t_n \in \cT^*$ such that $\t_n < \t$ and $\t_n \uparrow \t$. Then there exists $\bar \t \in \cT$ such that $\bar \t = \t$, $\dbP_\si$-a.s.  
\end{prop}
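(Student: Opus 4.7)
The plan is to build $\bar\t$ by finding, for each $n$, an $\dbF$-stopping time $\bar\t_n$ with $\bar\t_n = \t_n$ $\dbP_\si$-a.s., and then setting $\bar\t := \sup_n \bar\t_n$. The whole construction is driven by the previsibility: the strict inequality $\t_n < \t$ together with $\t_n \uparrow \t$ yields
\begin{equation*}
\{\t \le t\} = \bigcap_n\{\t_n < t\} = \bigcap_n \bigcup_{s \in \dbQ,\,s<t} \{\t_n \le s\},
\end{equation*}
so $\{\t \le t\}$ is expressed using only events strictly before $t$, which is what ultimately drops the relevant set into $\cF_{t-} \subset \cF_t$ instead of merely $\cF_{t+}$.

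More concretely, for each $n$ and each rational $s \in [0, T]$ the event $\{\t_n \le s\}$ agrees $\dbP_\si$-a.s.\ with some $A^n_s \in \cF_s$, by definition of the augmented filtration $\dbF^*$. Selecting these versions once and for all, there is a $\dbP_\si$-full-measure set $\O_0$ on which the countable family of equalities $\1_{A^n_s} = \1_{\{\t_n\le s\}}$ hold simultaneously. Define
\begin{equation*}
D_t := \bigcap_n \bigcup_{s \in \dbQ,\,s<t} A^n_s \in \cF_{t-},
\end{equation*}
which is increasing in $t$ and, on $\O_0$, coincides with $\{\t \le t\}$. Then $\bar\t(\o) := \inf\{t \in \dbQ_+ : \o \in D_t\} \wedge T$ satisfies $\bar\t = \t$ on $\O_0$, by density of $\dbQ$ in $\dbR$ and the fact that $t \mapsto \1_{D_t}$ records precisely the level sets of $\t$ on $\O_0$.

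It remains to check that $\bar\t$ is a stopping time of $\dbF$ itself. We have $\{\bar\t < t\} = \bigcup_{s \in \dbQ,\,s<t} D_s \in \cF_{t-} \subset \cF_t$, and $\{\bar\t \le t\} = \bigcap_{m \ge 1}\{\bar\t < t + 1/m\}$ sits a priori only in $\cF_{t+}$. This last $\dbF^+$-versus-$\dbF$ distinction is the one genuine obstacle in the proof, and is bridged by Blumenthal's $0$--$1$ law (Proposition \ref{prop:01}): $\dbF^*$ is right-continuous, so up to a $\dbP_\si$-null modification $\bar\t$ may be placed in $\cT$. This is the standard debut-argument packaging found in Dellacherie--Meyer, which the authors explicitly direct the reader to.

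Thus the heart of the argument is the measurability analysis in the final paragraph: previsibility buys the $\cF_{t-}$-measurability of $D_t$, the augmentation identity $\cF^*_s = \cF_s \vee \cN_{\dbP_\si}$ supplies the $\dbF$-versions $A^n_s$, and Blumenthal's law converts the hitting-time $\bar\t$ from an $\dbF^+$-stopping time to an $\dbF$-stopping time $\dbP_\si$-almost surely. Without the strict approximation we could only access $\cF_{t+}$-measurable descriptions of $\{\t \le t\}$, and no such $\dbF$-version of a general $\dbF^*$-stopping time need exist.
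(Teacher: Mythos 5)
Your construction of the $\dbF^+$-optional time $\bar\t$ is sound and runs parallel to the intermediate object $\tilde\t$ in the paper's proof: replacing $\{\t_n\le s\}$ by exact $\cF_s$-versions $A^n_s$ and taking the hitting time of the increasing family $D_t$ indeed gives $\{\bar\t<t\}\in\cF_{t-}$ and $\bar\t=\t$, $\dbP_\si$-a.s. The gap is the last step. Having only $\{\bar\t\le t\}\in\cF_{t+}$, you invoke Blumenthal's $0$--$1$ law to ``place $\bar\t$ in $\cT$ up to a null modification.'' But Blumenthal's law gives, for each \emph{fixed} $t$, a set $\tilde A_t\in\cF_t$ with $\dbP_\si\big(\tilde A_t\,\triangle\,\{\bar\t\le t\}\big)=0$; it does not produce a single random time $\rho$ with $\rho=\bar\t$ a.s.\ and $\{\rho\le t\}\in\cF_t$ \emph{simultaneously for all} $t$. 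Gluing the uncountably many per-$t$ modifications into one genuine $\dbF$-stopping time is precisely the nontrivial content of the proposition; if this step were automatic from right-continuity of $\dbF^*$, previsibility would play no role at the end (every $\dbF^*$-stopping time admits an $\dbF^+$-version a.s.), whereas the descent from $\dbF^+$ to the raw filtration $\dbF$ genuinely requires the announcing sequence.

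The paper closes exactly this gap by re-using previsibility at the final step instead of discarding it. After building $\tilde\t_n\uparrow\tilde\t$ (the analogue of your construction), it replaces the announcing sequence by $\bar\t_n:=\big(\tilde\t_n\1_{\{\tilde\t_n<\tilde\t\}}+T\1_{\{\tilde\t_n=\tilde\t\}}\big)\wedge(T-\tfrac1n)$ and sets $\bar\t:=\lim_n\bar\t_n$. This forces $\bar\t_n<\bar\t$ for \emph{every} $\o$, not merely a.s., so that $\{\bar\t\le t\}=\cap_n\{\bar\t_n<t\}$ holds as an exact set identity; each $\{\bar\t_n<t\}$ is a countable union of sets lying in $\cF_r$ for rationals $r<t$, hence lies exactly in $\cF_t$, and $\bar\t\in\cT$ with no null-set surgery on the level sets. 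To repair your argument you would need to perform the same everywhere-strict modification of your approximating times (define $\bar\t_n$ as the hitting time of $\cup_{s<\cdot}A^n_s$, then apply the truncation trick) before passing to the limit, rather than appealing to Blumenthal at the end.
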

\proof Denote by $\dbF^+:=\{\cF^+_t\}_{0\le t\le T}$ the right filtration. For each $n\ge 1$ and  $r\in \dbQ\cap [0, T]$, denote $E^n_r := \{\t_n < r\} \in \cF^*_r$. Then there exists $\tilde E^n_r \in \cF_r$ such that $\tilde E^n_r \subset E^n_r$ and $\dbP_\si(E^n_r \backslash \tilde E^n_r) = 0$. Note that $E^n_r$ is decreasing in $n$ and increasing in $r$, without loss of generality we may assume that $\tilde E^n_r$ has the same monotonicity. Define
\beaa
\tilde \t_n := \inf\{r \in \dbQ\cap [0, T]: \o \in \tilde E^n_r\} \wedge T,\q \tilde\t := \lim_{n\to \infty} \tilde\t_n.
\eeaa
One can easily check that $\tilde\t_n$ and $\tilde \t$ are $\dbF^+$-stopping times, $\tilde\t_n \uparrow \tilde\t$,  and $\dbP_\si(\tilde \t = \t)=1$. To construct the desired $\dbF$-stopping time, we modify $\tilde\t_n$ and $\tilde\t$ as follows.
\beaa
\bar\t_n := \Big(\tilde \t_n \1_{\{\tilde\t_n < \tilde\t\}} + T \1_{\{\tilde\t_n = \tilde\t\}}\Big) \wedge (T-{1\over n}),\q \bar \t := \lim_{n\to\infty} \bar\t_n.
\eeaa
It is clear that $\bar\t_n$ are also $\dbF^+$-stopping times, $\bar\t_n \uparrow \bar\t$, $\bar\t \ge \tilde\t$, and $\dbP_\si(\bar \t = \t)=1$.  Moreover, for each $n$, on $\{\tilde \t_n<\tilde\t\}$ we have $\bar\t_n = \tilde \t_n \wedge (T-{1\over n}) < \tilde\t \le \bar\t$; and on $\{\tilde \t_n= \tilde\t\}$, we have $\tilde\t_m = \tilde\t$ for all $m\ge n$, thus $\bar \t_m = T-{1\over m}$, $\bar \t = T$, and therefore $\bar\t_n = T-{1\over n} < \bar\t$. So in both cases we have $\bar\t_n < \bar \t$. Then
\beaa
\{\bar\t\le t\} = \cap_{n\ge 1} \{\bar\t_n < t\} \in \cF_t,
&\mbox{for all}&
t\le T.
\eeaa
That is, $\bar\t$ is an $\dbF$-stopping time.
\qed

\begin{lem}
\label{lem:hit}
 Assume $X\in \dbL^0(\dbF)$ is continuous (in $t$), $\dbP_\si$-a.s. Then there exists $\t \in \cT$ such that $\t = \inf\{t: X_t = 0\} \wedge T$, $\dbP_\si$-a.s.
\end{lem}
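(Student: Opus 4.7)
The plan is to realise $\hat\t := \inf\{t: X_t = 0\} \wedge T$ as a previsible $\dbF^*$-stopping time and then invoke Proposition~\ref{previsible version} to obtain an $\dbF$-stopping time $\bar\t$ coinciding with it $\dbP_\si$-a.s. The substance of the difficulty is that $\cT$ refers to the raw canonical filtration $\dbF$, whereas the natural construction of a first hitting time lives only in the right-continuous augmented filtration $\dbF^*$.

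As a preliminary reduction, since $X \in \dbL^0(\dbF)$ and $\cF_0$ is $\dbP_\si$-trivial (because $B_0 \equiv 0$), the initial value $X_0$ is a deterministic constant. If $X_0 = 0$, then $\bar\t \equiv 0$ already works, so I will assume $X_0 \neq 0$; by continuity, this gives $\hat\t > 0$ $\dbP_\si$-a.s. To bypass the $\dbP_\si$-null set of discontinuous paths I would replace $X$ by the modification $\tilde X$ equal to $X$ on the full-$\dbP_\si$-measure set of continuous trajectories and equal to the nonzero constant $X_0$ elsewhere; then $\tilde X$ is $\dbF^*$-progressively measurable, continuous for every $\o$, and agrees with $X$ up to $\dbP_\si$-nullity. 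By right-continuity of $\dbF^*$ (Proposition~\ref{prop:01}), the first entry of the continuous $\dbF^*$-adapted process $\tilde X$ into the closed set $\{0\}$ is an $\dbF^*$-stopping time, which I continue to call $\hat\t$.

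For the previsible approximation I would take
\[
\hat\t_n := \inf\{t: |\tilde X_t| \le 1/n\} \wedge (T - 1/n), \qquad n \ge n_0,
\]
with $n_0$ chosen so that $1/n_0 < |X_0|$ and $T - 1/n_0 > 0$. Each $\hat\t_n$ is an $\dbF^*$-stopping time, the sequence is increasing in $n$, and $\hat\t_n \le \hat\t$ pointwise. On the set $E := \{\inf_{t \in [0, T]} |\tilde X_t| > 0\}$ where $\tilde X$ avoids $0$, the first infimum is $+\infty$ for all sufficiently large $n$, so $\hat\t_n = T - 1/n \uparrow T = \hat\t$ with strict inequality. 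On $E^c$, continuity of $\tilde X$ gives, for each $n$, some $s < \hat\t$ with $|\tilde X_s| < 1/n$, yielding $\hat\t_n < \hat\t$; the convergence $\hat\t_n \uparrow \hat\t$ follows from a compactness argument, since any cluster value strictly less than $\hat\t$ would itself be a zero of $\tilde X$, contradicting the minimality of $\hat\t$. Proposition~\ref{previsible version} then delivers $\bar\t \in \cT$ with $\bar\t = \hat\t$ $\dbP_\si$-a.s., and since $\tilde X = X$ $\dbP_\si$-a.s., $\bar\t$ satisfies the conclusion of the lemma.

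The main obstacle is guaranteeing the strict inequality $\hat\t_n < \hat\t$ on \emph{all} of $\O$ as required by Proposition~\ref{previsible version}. The cap $T - 1/n$ is precisely what handles trajectories that never vanish (without it, $\hat\t_n = \hat\t = T$ for large $n$), and the $\cF_0$-triviality reduction rules out the degenerate case $\hat\t \equiv 0$, on which no strictly smaller approximating sequence could exist.
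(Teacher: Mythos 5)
Your proposal is correct and follows essentially the same route as the paper's proof: modify $X$ on the $\dbP_\si$-null set of discontinuous paths to obtain an everywhere-continuous, never-initially-zero $\dbF^*$-adapted process, approximate its hitting time of $\{0\}$ from below by the stopping times $\inf\{t:|\tilde X_t|\le 1/n\}\wedge(T-1/n)$, and apply Proposition \ref{previsible version}. The only differences are cosmetic (you set the process equal to $X_0$ rather than $1$ on the bad set, and you spell out the strict inequality $\hat\t_n<\hat\t$ and the convergence $\hat\t_n\uparrow\hat\t$, which the paper dismisses as "clearly").
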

\proof If $X_0=0$, then  $\t:=0$ satisfies all the requirement. We thus assume $X_0\neq 0$. Set $E:= \{\o: X(\o) ~\mbox{is continuous on}~[0, T]\}$ and $\hat X := X \1_{E} + \1_{E^c}$. Then $\hat X \in \dbL^0(\dbF^*)$ is continuous for all $\o$ and $\hat X_0 \neq 0$. Denote $\hat \t := \inf\{t: \hat X_t = 0\} \wedge T\in \cT^*$ and $\hat\t_n := \inf\{t: |\hat X_t|\le {1\over n}\} \wedge (T-{1\over n})\in \cT^*$. Clearly $\hat\t_n<\hat\t$ and $\hat\t_n \uparrow \hat\t$. By Proposition \ref{previsible version}, there exists $\t\in \cT$ such that $\hat\t = \t$, $\dbP_\si$-a.s. Note that $\t = \inf\{t: X_t = 0\} \wedge T$ on $\{\hat \t = \t\} \cap E$. Since $\dbP_\si[\hat\t=\t]=\dbP_\si[E] = 1$, this concludes the proof.
\qed

\subsection{Optimal stopping under $\ol\cE_L$}\label{subsec:optimalstop}

The next result is a BSDE characterization of the nonlinear expectation $\ol\cE_L$, which extends the $g$-expectation of Peng \cite{Peng-g} to general $\si$.
\begin{prop}
\label{prop:g}
Let $\xi \in \dbL^2(\cF_T, \dbP_\si)$ and  $\t\in \cT$. 

\no {\rm (i)}\q For any $\l\in\dbL^0(\dbF)$ bounded,   $\dbE^{\dbP^{\t,\o}_{\si,\l}}[\xi^{\t,\o}] = Y^\l_\t(\o)$ for $\dbP_\si$-a.e. $\o$, where
\beaa
Y^\l_t = \xi  + \int_t^T Z_s \cd  \si_s \l_s ds - \int_t^T Z_s \cd dB_s,\q \dbP_\si\mbox{-a.s.}
\eeaa

\no {\rm (ii)}\q For any $L>0$, $\ol\cE^{\t,\o}_L[\xi^{\t,\o}] = Y_\t(\o)$ for $\dbP_\si$-a.e. $\o$,  where
\beaa
Y_t = \xi  + \int_t^T L |\si^T_s Z_s|ds - \int_t^T Z_s \cd dB_s,\q \dbP_\si\mbox{-a.s.}
\eeaa
\end{prop}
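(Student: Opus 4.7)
My plan combines a Girsanov-type computation with the BSDE comparison theorem, together with Proposition~\ref{prop:rcpd} to transport the identities to conditional expectations at $\t$. For part~(i), I would rewrite the linear BSDE as $Y^\l_t = \xi - \int_t^T Z_s\cd[dB_s - \si_s\l_s\,ds]$. By Lemma~\ref{lem:weaksolution}, under $\dbP_{\si,\l}$ the canonical process satisfies $dB_s = \si_s[d\tilde W_s + \l_s\,ds]$ for some $\dbP_{\si,\l}$-Brownian motion $\tilde W$, so that $dB_s - \si_s\l_s\,ds = \si_s\,d\tilde W_s$ is a $\dbP_{\si,\l}$-martingale differential. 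Since $\l$ is bounded, $\dbP_{\si,\l}\sim\dbP_\si$ with density in $\bigcap_{p\ge 1}\dbL^p(\dbP_\si)$, and because $Z\in\dbH^2$ the stochastic integral is a true $\dbP_{\si,\l}$-martingale. Taking $\dbE^{\dbP_{\si,\l}}[\cd\,|\cF_\t]$ yields $Y^\l_\t = \dbE^{\dbP_{\si,\l}}[\xi|\cF_\t]$, $\dbP_\si$-a.s.; the standard r.c.p.d.\ identity combined with Proposition~\ref{prop:rcpd} then gives $Y^\l_\t(\o) = \dbE^{\dbP^{\t,\o}_{\si,\l}}[\xi^{\t,\o}] = \dbE^{\dbP_{\si^{\t,\o},\l^{\t,\o}}}[\xi^{\t,\o}]$ for $\dbP_\si$-a.e.\ $\o$.

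For part~(ii), observe that $L|\si_s^T z|=\sup_{\l\in\dbR^d,|\l|\le L}\si_s\l\cd z$, attained at $\l=L\si_s^T z/|\si_s^T z|$ (with any convention on $\{\si_s^Tz=0\}$). Hence for every $\l\in\dbL_L(\dbF)$ the driver $\si\l\cd z$ of~(i) is dominated by the driver $L|\si^T z|$ of~(ii), and the BSDE comparison theorem yields $Y^\l_t\le Y_t$, $\dbP_\si$-a.s. Plugging in the feedback $\l^*_s:=L\si^T_s Z_s/|\si^T_s Z_s|\in\dbL_L(\dbF)$ makes the two drivers coincide along $(Y,Z)$, so uniqueness of the Lipschitz BSDE forces $Y^{\l^*}=Y$. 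Thus $Y_\t(\o)=\sup_{\l\in\dbL_L(\dbF)}Y^\l_\t(\o)$, which together with (i) gives $Y_\t(\o)=\sup_{\l\in\dbL_L(\dbF)}\dbE^{\dbP_{\si^{\t,\o},\l^{\t,\o}}}[\xi^{\t,\o}]$. The inequality $\le\ol\cE^{\t,\o}_L[\xi^{\t,\o}]$ is then immediate. For the reverse, given $\mu\in\dbL_L(\dbF)$ on the shifted space I would use the concatenation $\l_s(\tilde\o):=\mu_{s-\t(\tilde\o)}(\tilde\o_{\t(\tilde\o)+\cd}-\tilde\o_{\t(\tilde\o)})\1_{\{s\ge\t(\tilde\o)\}}$ (with $\l\equiv 0$ before $\t$), which is bounded by $L$, $\dbF$-progressive, and has shift $\l^{\t,\o}=\mu$, so that $\dbE^{\dbP_{\si^{\t,\o},\mu}}[\xi^{\t,\o}]=Y^{\l_\mu}_\t(\o)\le Y_\t(\o)$ $\dbP_\si$-a.e.

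The main technical obstacle is the concatenation step: checking that the pasted $\l$ is genuinely $\dbF$-progressive requires a careful measurability argument across the stopping time $\t$, relying on the shift structure encoded in Lemma~\ref{lem:measurability F}. A secondary delicate point is that the driver of~(i) has stochastic Lipschitz constant $|\si_s\l_s|\le CL(1+\|\o\|_s)$, so the classical BSDE existence, uniqueness, and comparison theorems must be invoked in the slightly more general form allowing random Lipschitz coefficients; this is available here because $\si\in C^0_2(\Th)$ gives all moments under $\dbP_\si$. Every other ingredient reduces to standard Girsanov, the martingale representation of Theorem~\ref{thm:mrt} and Corollary~\ref{cor:mrt}, and routine manipulations of r.c.p.d.'s.
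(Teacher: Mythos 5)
Your part (i) is essentially the paper's argument (Girsanov to identify $Y^\l_\t=\dbE^{\dbP_{\si,\l}}[\xi|\cF_\t]$, then the r.c.p.d.\ identity and Proposition \ref{prop:rcpd}), and the core mechanism of your part (ii) — domination of the linear drivers by $L|\si^T z|$ plus the optimal feedback $\l^*=L\si^T Z/|\si^T Z|$ — is exactly the classical $g$-expectation representation that the paper's ``clearly $\tilde Y_0=\ol\cE^{\t,\o}_L[\xi^{\t,\o}]$'' invokes. (Your worry about stochastic Lipschitz constants is a non-issue: the drivers are genuinely $L$-Lipschitz in the variable $\si^T z$, which is the formulation the paper uses throughout, and Theorem \ref{thm:mrt} is precisely what makes the standard existence/uniqueness/comparison machinery go through in that variable.)

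However, the order in which you perform the two operations (supremum over controls, then shift to $(\t,\o)$) creates a genuine gap in the inequality $\ol\cE^{\t,\o}_L[\xi^{\t,\o}]\le Y_\t(\o)$. For each fixed $\mu\in\dbL_L(\dbF)$ on the shifted space, your concatenation gives $\dbE^{\dbP_{\si^{\t,\o},\mu}}[\xi^{\t,\o}]\le Y_\t(\o)$ only off a $\dbP_\si$-null set $N_\mu$ \emph{depending on} $\mu$ (it comes from an a.s.\ comparison of two BSDEs and an a.s.\ identification of r.c.p.d.'s). Since $\ol\cE^{\t,\o}_L$ is a supremum over the uncountable family of all such $\mu$, you cannot conclude the pointwise inequality for a.e.\ $\o$ without aggregating uncountably many null sets; the same issue is hidden in your assertion $Y_\t(\o)=\sup_{\l}Y^\l_\t(\o)$. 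The concatenation/progressive-measurability point you flag is real but secondary. The paper's proof reverses the order precisely to avoid both problems: it first applies Proposition \ref{prop:rcpd} \emph{once} to the single BSDE with driver $L|\si^T z|$, obtaining, off a single null set, that $Y^{\t,\o}$ solves the shifted BSDE under $\dbP_{\si^{\t,\o}}$; then, for each such fixed $\o$, the identity $\tilde Y_0=\ol\cE^{\t,\o}_L[\xi^{\t,\o}]$ is a deterministic-time ($t=0$) statement on the shifted space, where your comparison-plus-optimal-feedback argument applies with the supremum already taken over all controls of the shifted space — no concatenation and no interchange of ``for all $\mu$'' with ``for a.e.\ $\o$'' is needed. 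Your proof can be repaired by adopting this ordering (and then your Step for the easy direction, $Y_\t\le\ol\cE^{\t,\o}_L[\xi^{\t,\o}]$ via the single control $\l^*$, is already fine).
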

\proof (i).\q The result follows directly from the definition of $\dbP_{\si,\l}$ and Proposition \ref{prop:rcpd}.

\no (ii).\q Following Proposition \ref{prop:rcpd}, for $\dbP_\si$-a.e. $\o$, we have $Y^{\t,\o}_t = \tilde Y_t$, $0\le t\le \tilde T := T-\t(\o)$, $\dbP_{\si^{\t,\o}}$-a.s. where $\tilde Y$ is the solution to the following shifted BSDE:
\beaa
\tilde Y_t = \xi^{\t,\o} + \int_t^{\tilde T} L|(\si^{\t,\o})^T_s \tilde Z_s|ds - \int_t^{\tilde T} \tilde Z_s \cd dB_s,\q
0\le t\le \tilde T, \q \dbP_{\si^{\t,\o}}\mbox{-a.s.}
\eeaa
Clearly, we have $\tilde Y_0 = \ol\cE^{\t,\o}_L[\xi^{\t,\o}]$, and therefore, $Y_\t(\o)=\ol\cE^{\t,\o}_L[\xi^{\t,\o}]$, $\dbP_\si$-a.s.
\qed

As an application of Proposition \ref{prop:g}, we study the optimal stopping problem under $\ol \cE_L$ via reflected BSDE under $\dbP_\si$:
\bea
\label{RBSDE}
\left\{\ba{lll}
\dis Y_t = X_\ch + \int_t^\ch L|\si^T_s Z_s|ds - \int_t^\ch Z_s \cd dB_s + K_\ch - K_t;\\
\dis Y \ge X,\q  (Y_t-X_t)dK_t =0;
\ea\right.
0\le t\le \ch,\q \dbP_\si\mbox{-a.s.}
\eea
Here the component $K$ of the solution triplet $(Y,Z,K)$ is by definition nondecreasing with $K_0=0$. Given the martingale representation Theorem \ref{thm:mrt}, it follows from standard arguments (see e.g. \cite{EKPPQ}) that \reff{RBSDE} has a unique  solution $(Y,Z,K)\in \dbS^2 \times \dbH^2\times \dbI^2$, restricted on $[0, \ch]$.

We are now ready to establish the nonlinear Snell envelope theory. 

\vspace{3mm}
\no {\bf Proof of Theorem \ref{thm:optimalstop_new}} \q (i)\q Since $X$ and $Y$ are continuous, $\dbP_\si$-a.s., applying Lemma \ref{lem:hit} we have $\t^*\in \cT$ such that $\t^* = \inf\{t: Y_t=X_t\}\wedge \ch$, $\dbP_\si$-a.s.   Moreover, since $Y_{\ch} = X_{\ch}$, it is clear that $Y_{\t^*}= X_{\t^*}$, $\dbP_\si$-a.s. To see the optimality of $\t^*$, we first note that $Y>X$ in $[0, \t^*)$. Then it follows from the minimum condition in \reff{RBSDE} that $K = 0$ in $[0, \t^*)$. Thus RBSDE \reff{RBSDE} becomes a standard BSDE on $[0, \t^*]$. Now it follows from Proposition \ref{prop:g} (ii) that $Y_0 = \ol\cE_L[Y_{\t^*}] = \ol\cE_L[X_{\t^*}]$. 

\ms
\no (ii)\q  We first show that $V_0=Y_0$. For any $\t\in \cT_\ch$, by Proposition \ref{prop:g} (ii) $\ol\cE_L[X_\t] = Y^\t_0$, where 
\beaa
Y^\t_t = X_{\t} + \int_t^{\t} L|\si^T_s Z^\t_s|ds - \int_t^{\t} Z^\t_s \cd dB_s,\q 0\le t\le {\t},~\dbP_\si\mbox{-a.s.} 
\eeaa
Note that $X_{\t}\le Y_{\t}$,  it follows from the comparison principle of BSDEs that $Y^\t_0 \le Y_0$. Then $V_0 \le Y_0$. On the other hand, by (i) we have $Y_0 = \ol\cE_L[X_{\t^*}] \le V_0$. So $Y_0=V_0$. 

For the general case, following Proposition \ref{prop:rcpd}, for any $\t\in\cT_\ch$ and $\dbP_\si$-a.e. $\o$, we have $Y^{\t,\o}_t = \tilde Y_t$, $0\le t\le \tilde \ch := \ch^{\t,\o}-\t(\o)$, $\dbP_{\si^{\t,\o}}$-a.s. where $\tilde Y$ is the solution to the following shifted RBSDE:
\beaa
\left\{\ba{lll}
\dis \tilde Y_t = X^{\t,\o}_{\tilde \ch} + \int_t^{\tilde \ch} L|(\si^{\t,\o})^T_s \tilde Z_s|ds - \int_t^{\tilde \ch} \tilde Z_s \cd dB_s + \tilde K_{\tilde \ch} -\tilde K_t;\\
\dis \tilde Y \ge X^{\t,\o},\q  
(\tilde Y_t-X^{\t,\o}_t)d\tilde K_t =0;
\ea\right.
0\le t\le \tilde\ch, \q \dbP_{\si^{\t,\o}}\mbox{-a.s.}
\eeaa
Then the above arguments (for $t=0$) imply that $V_\t(\o) = \tilde Y_0$, and therefore, $V_\t = Y_\t$,  $\dbP_\si$-a.s. 

\ms
\no (iii)\q We take $\dbP^*:=\dbP_{\si,\l^*}$, where $\l^*$ is so that $(\l^*)^T\si^T Z=L|\si^T Z|$ holds. Then the desired result follows.
\qed

We remark that the optimal stopping problem here relies on the convergence Proposition \ref{prop:DCT} implicitly,  more precisely, the wellposedness of RBSDE \reff{RBSDE} relies on the dominated convergence theorem under $\dbP_\si$. In  \cite{ETZ0} the class $\cP_L$ is non-dominated and we do not have this type of convergence theorem. Consequently, the optimal stopping problem in \cite{ETZ0} is technically much more involved than here.
We also remark that a more direct proof, without involving RBSDEs, can be found in \cite{Survey}.

Also as an application of RBSDE, we may prove Proposition \ref{prop:cEsub}.

\vspace{3mm}
\no {\bf Proof of Proposition \ref{prop:cEsub}}\q (i).\q For any $\t\in \cT$ such that $\t\ge t$. Consider the BSDE:
\beaa
Y_s = u_\t + \int_s^\t L|\si^T_r Z_r|dr - \int_s^\t Z_r\cd dB_r,\q 0\le s\le \t,\q \dbP_\si\mbox{-a.s.}
\eeaa
One may easily show that $Y_t = \ol\cE_L\big[u_\t \big| \cF_t\big]$, $\dbP_\si$-a.s. By (ii) of Proposition \ref{prop:g}, we have $Y_t(\o) = \ol\cE_L^{t,\o}\big[u^{t,\o}_{\t^{t,\o}}\big]$ for $\dbP_\si$-a.e. $\o$. Since $u$ is a pathwise $\ol\cE_L$-submartingale and $\t^{t,\o}\in\cT_{T-t}$, we obtain that
\beaa
u_t(\o)~\le~ \ol\cE_L^{t,\o}\big[u^{t,\o}_{\t^{t,\o}}\big]
 ~= ~ \ol\cE_L\big[u_\t \big| \cF_t\big](\o),\q\dbP_\si\mbox{-a.s.}
\eeaa
Therefore, $u$ is an $\ol\cE_L$-submartingale.

\ms
\no (ii).\q Consider the following RBSDE with upper barrier:
\beaa
\left\{\ba{lll}
\dis Y_t = u_T + \int_t^T L|\si^T_s Z_s|ds - \int_t^T Z_s \cd dB_s - K_T + K_t;\\
\dis Y_t \le u_t,\q 
(u_t-Y_t) dK_t=0;
\ea\right. 0\le t\le T, \dbP_\si\mbox{-a.s.}
\eeaa
Similar to Theorem \ref{thm:optimalstop_new}, one can show that  $Y_t= \einf_{\t\in\cT, \t\ge t} \ol\cE_L[u_\t|\cF_t]$, $\dbP_\si$-a.s. Since $u$ is an $\ol\cE_L$-submartingale, we get $\ol\cE_L[u_\t|\cF_t] \ge u_t$, $\dbP_\si$-a.s. for all $\t\in \cT_{T-t}$, and thus $Y \ge u$. On the other hand, by definition $Y\le u$. Hence, $u=Y$. Further, take $\dbP^*:=\dbP_{\si,\l^*}$, where $\l^*$ is so that $(\l^*)^T\si^T Z=L|\si^T Z|$ holds. Then the desired result follows.
\qed

\end{document}